\documentclass[11pt,a4paper]{amsart}
\usepackage{mathtools}
\usepackage{amssymb}
\usepackage{enumitem}
\setlist[enumerate]{label=\emph{(\roman*)}}
\usepackage[T1]{fontenc}
\usepackage{fullpage}
\usepackage{color}
\usepackage[colorlinks=true]{hyperref}
\hypersetup{urlcolor=blue, citecolor=cyan}
\hypersetup{citecolor=blue}
\mathtoolsset{showonlyrefs}
\newtheorem{theorem}{Theorem}[]

\newtheorem{lemma}[theorem]{Lemma}
\newtheorem{proposition}[theorem]{Proposition}

\theoremstyle{definition}

\newtheorem*{problem*}{Problem}

\newtheorem{remark}[theorem]{Remark}

\numberwithin{equation}{section}
\def\MMA{\textnormal{M}}
\def\MMB{\textnormal{N}}
\def\GG{\textnormal{G}}
\def\YY{Y}

\def\bN {\mathbb{N}}

\def\bR {\mathbb{R}}
\def\bS {\mathbb{S}}

\def\WW {\vec W}
\def\bWW {\WW_{\bGa}}
\def\bWWt {\WW_{\bGa(t)}}

\def\bW {W_{\bGa}}
\def\zz {z}
\def\bzz {\bs z}
\def\bb {\bs b}
\def\bla {\bs \lambda}
\def\bGa {\bs \Gamma}
\def\ini {T}
\def\yy {\bs y}

\def\cB {\mathcal{B}}
\def\cC {\mathcal{C}}

\def\cE {\mathcal{E}}
\def\cF {\mathcal{F}}
\def\cG {\mathcal{G}}
\def\cH {\mathcal{H}}
\def\cI {\mathcal{I}}
\def\cJ {\mathcal{J}}
\def\cK {\mathcal{K}}

\def\cM {\mathcal{M}}

\def\cY {Y}
\def\YY {Y}

\def\gv {\vec g}
\def\uv {\vec u}

\def\grad {{\nabla}}

\def\la {\langle}
\def\ra {\rangle}

\newcommand{\tx}[1]{\mbox{#1}}

\newcommand{\wto}{\rightharpoonup}

\newcommand{\wt}[1]{\widetilde{#1}}
\newcommand{\bs}[1]{\boldsymbol{#1}}

\newcommand{\bcc}{\boldsymbol{c}}
\newcommand{\cc}{c}

\newcommand{\spn}{\operatorname{span}}

\newcommand{\dist}{\operatorname{dist}}

\newcommand{\Id}{\operatorname{Id}}

\DeclareMathOperator{\diag}{diag}

\newcommand{\ud}{\textnormal{\,d}}
\newcommand{\vd}{\textnormal{d}}

\newcommand{\vD}{\textnormal{D}}
\newcommand{\dd}[1]{{\frac{\vd}{\vd{#1}}}}



\newcommand{\uln}[1]{{\underline{ #1 }}}

\title[Multi-bubble for the $5$D energy-critical wave equation]{Construction of multi-bubble solutions for the energy-critical wave equation in dimension $5$}
\author[J.~Jendrej]{Jacek Jendrej}
\address{CNRS and Universit\'e Paris 13, LAGA, UMR 7539, 99 av J.-B.~Cl\'ement, 93430 Villetaneuse, France}
\email{jendrej@math.univ-paris13.fr}
\author[Y.~Martel]{Yvan Martel}
\address{CMLS, \'Ecole Polytechnique, CNRS, Institut Polytechnique de Paris, 91128 Palaiseau, France}
\email{yvan.martel@polytechnique.edu}
\keywords{ground state; multi-bubble; wave equation; energy-critical}

\begin{document}
\begin{abstract}
We prove the existence of a global solution of the energy-critical focusing wave equation in dimension $5$ blowing up in infinite time at any $K$ given points $z_k$ of~$\bR^5$, where $K\geq 2$. The concentration rate of each bubble is asymptotic to $c_k t^{-2}$ as $t\to \infty$, where the~$c_k$ are positive constants depending on the distances between the blow-up points $z_k$.
This result complements previous constructions of blow-up solutions and multi-solitons of the energy-critical wave equation in various dimensions $N\geq 3$.
 \end{abstract}
\maketitle
\section{Introduction}
\label{sec:intro}
\subsection{Main result}
\label{ssec:setting}

We consider the energy-critical focusing wave equation in dimension~$5$
\begin{equation}
\label{eq:nlw}
\partial_t^2 u(t, x) = \Delta u(t, x) + f(u(t, x)),\quad t\in \bR,~x\in \bR^5,
\end{equation}
where $f(u) := |u|^{\frac 43}u$. Let $F(u) := \frac 3{10}|u|^{\frac{10}3}$.
The energy functional related to this equation
\begin{equation}
\label{eq:energy}
E(u, \partial_t u) := \int_{\bR^5} \Big(\frac 12 |\partial_t u|^2 + \frac 12 |\grad u|^2 - F(u)\Big)\ud x
\end{equation}
is well-defined for $(u,\partial_t u)\in \dot H^1(\bR^5)\times L^2(\bR^5)$ by the Sobolev inequality
\begin{equation}\label{sobolev}
\|u\|_{L^{\frac{10}3}} \leq C \|\nabla u\|_{L^2}.
\end{equation}
We equip the space of pairs of functions $\vec v=(v, \dot v)$ with the symplectic form
\begin{equation}
\label{eq:symplectique}
\omega(\vec v, \vec w\,) := \la \dot v, w\ra - \la v, \dot w\ra = \la J\vec v, \vec w\,\ra,\quad
J=\begin{pmatrix} 0 & 1\\ -1 & 0\end{pmatrix}.
\end{equation}
Then \eqref{eq:nlw} is the Hamiltonian system corresponding to the Hamiltonian function $E$. In other words
for a solution $u$ of \eqref{eq:nlw}, $\vec u=(u,\partial_t u)$ satisfies
\begin{equation}\label{hamil}
\omega(\vec v, \partial_t \uv \,) = \la \vD E(\uv), \vec v\,\ra,\quad \mbox{for all }\vec v.
\end{equation}
We recall that this equation is locally well-posed in the energy space $\dot H^1(\bR^5)\times L^2(\bR^5)$,
see~\cite{GSV,KM,SS,SSbook} and references therein.
For such solutions, the energy $E(u,\partial_t u)$ is constant in time.

Recall that the function
\[W(x) := \bigg(1 + \frac{|x|^2}{15}\bigg)^{-\frac{3}{2}},\quad x\in \bR^5,
\]
is the ground state solution of the elliptic equation
\begin{equation}\label{elliptic}
\Delta W=W^{\frac 73}\quad \mbox{on $\bR^5$.}
\end{equation}
Up to scaling and translation invariance, $W$ is the unique positive solution of~\eqref{elliptic}.
In particular, $\vec u(t,x)=(W(x),0)$ is a stationary solution of \eqref{eq:nlw} and
other explicit solutions of~\eqref{eq:nlw} are deduced by the sign, scaling, translation and Lorentz invariances of the equation:
\begin{equation*}
\vec u(t,x)=\pm \left(W_{\ell,\lambda}(x-\ell t-x_0),- (\ell\cdot \nabla) W_{\ell,\lambda}(x-\ell t-x_0)\right),
\end{equation*}
where for $\lambda>0$, $x_0\in \bR^5$ and $\ell\in \bR^5$ with $|\ell|<1$,
\begin{equation*}
W_{\lambda,\ell}(x)= W_\lambda \bigg(x+\sigma \frac{\ell(\ell\cdot x)}{|\ell|^2}\bigg) ,
\quad \sigma=\frac 1{\sqrt{1-|\ell|^2}} -1,\quad W_\lambda(x)=\lambda^{-\frac 32} W(\lambda^{-1} x).
\end{equation*}
It is well-known that the ground state $W$ achieves the optimal constant in the critical Sobolev inequality~\eqref{sobolev}, see~\cite{Au,Ta}.
It is also characterized as the threshold element for global existence and scattering (asymptotic linear behavior) of
solutions of~\eqref{eq:nlw}, see~\cite{KM}.
Above this threshold, the study of the large time asymptotic behavior of solutions of \eqref{eq:nlw} raises many questions like the following ones.
\begin{enumerate}[label={(\roman*)}]
\item The classification of all possible long time behaviors of the solutions.
\item The existence and properties of finite or infinite time bubbling solutions.
\item The effect of the nonlinear interactions on the soliton dynamics.
\end{enumerate}
Question (i) is strongly related to the soliton resolution conjecture, 
which predicts that any global bounded solution decomposes asymptotically as $t\to \infty$ into a sum of a finite number $K$ of decoupled energy bubbles plus a solution of the linear wave equation.
Such a decomposition result is proved in~\cite{DKM4} for radially symmetric solutions of the $3$D energy-critical wave equation.
In~\cite{DKM4}, a suitable variant of the decomposition result is also proved for finite time blow-up solutions of
type-II, \emph{i.e.} non ODE type.
In the non radial case, a similar decomposition result (possibly involving excited states, \emph{i.e.} solutions of \eqref{elliptic} other than the ground state) is proved along a subsequence of time for dimensions $3$, $4$,~$5$ in~\cite{DKM6,DJKM1} and extended to any odd dimensions in~\cite{Ro}.
These general results, valid for any initial data, do not specify the number of solitons nor the exact asymptotic behavior of the geometric parameters of each soliton, except a basic decoupling property of the various bubbles and the dispersive part.

Concerning question (ii), several constructions of bubbling solutions with various explicit type-II blow-up rates are available:
see~\cite{DK,KS,KST} in dimension 3, \cite{HR} in dimension~$4$ and~\cite{JJjfa} in dimension~$5$.
In complement to the above mentioned general decomposition results, it is also relevant to study the existence and properties of global solutions whose asymptotic behavior involves several decoupled solitons.
For the energy-critical wave equation in dimension larger than~$6$, a global radial solution decomposing asymptotically as a concentrating bubble on the top of a standing soliton of same sign is constructed in~\cite{JJwave}. Note that this behavior corresponds to a specific choice of sign and blow-up rate; see a nonexistence result in~\cite{JJnon} and a classification result in a similar framework in~\cite{JL}. 
In~\cite{MMwave1}, a solution of \eqref{eq:nlw} containing an arbitrary number~$K$ of bounded traveling solitons is constructed under some restrictions on the speeds $\ell_k$ of the solitons.
We also refer to~\cite{MMwave2} proving inelasticity of soliton interactions in the same context.
Such works clearly relate questions (ii) and (iii)
since the nonlinear interactions between the two solitons are responsible either for the blow up behavior or for the inelasticity property.

We state the main result of this paper.
\begin{theorem}
\label{thm:constr-N5}
Let $K \geq 2$ and $\zz_1, \ldots, \zz_K$ be any $K$ points of $\bR^5$ distinct two by two.
There exist positive constants
 $\cc_1, \ldots, \cc_K$ and a solution $(u,\partial_t u): [0, \infty) \to \dot H^1(\bR^5)\times L^2(\bR^5)$ of \eqref{eq:nlw} such that for all $t>0$,
\begin{equation}
\bigg\|u(t) - \sum_{k=1}^K \frac 1{(c_kt^{-2})^\frac32}W \bigg(\frac{\cdot-z_k}{c_k t^{-2}}\bigg)\bigg\|_{\dot H^1(\bR^5)}+ \|\partial_t u(t)\|_{L^2(\bR^5)}\lesssim t^{-\frac 13}.\end{equation}
\end{theorem}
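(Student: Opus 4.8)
The plan is to construct the solution by a compactness argument applied to a sequence of approximate solutions solved backward from large times, in the spirit of the multi-soliton constructions of~\cite{MMwave1} and the two-bubble work~\cite{JJwave}. The first step is to set up a precise ansatz: write the approximate solution as $\vec u = \sum_{k=1}^K \vec W_{\lambda_k(t)}(\cdot - z_k) + \vec g(t)$, where the modulation parameters $\lambda_k(t)$ are expected to behave like $c_k t^{-2}$ and $\vec g$ is an error term. One computes the interaction force between bubbles: because the bubbles sit at \emph{fixed} distinct points $z_k$, the interaction between bubble $k$ and bubble $j$ is, to leading order, governed by $\lambda_k^{1/2}\lambda_j^{1/2}$ times a constant built from $\int W^{7/3}$ and $|z_k-z_j|^{-3}$ (the decay of $W$ in dimension $5$ is $|x|^{-3}$). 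Matching this interaction against the term produced by the scaling instability of a single bubble, $\ddot\lambda_k$, leads to a system of ODEs of the form $\ddot\lambda_k \sim \sum_{j\neq k} a_{kj}\sqrt{\lambda_k\lambda_j}$, whose solutions with the right sign behave polynomially; the choice $\lambda_k = c_k t^{-2}$ with suitable positive $c_k$ solving an algebraic system is the natural attractor. This computation also explains the gain $t^{-1/3}$ in the theorem.

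The second step is the bootstrap / energy estimate. One introduces modulation parameters by imposing orthogonality conditions on $\vec g$ with respect to the kernel directions of the linearized operator $L_{\lambda_k} = -\Delta - f'(W_{\lambda_k})$ around each bubble (translations and scaling), which fixes $z_k$ up to small corrections and determines $\lambda_k(t)$. Then one builds a mixed energy–virial functional, adapted to each scale $\lambda_k$, of the schematic form $\mathcal H(t) = \frac12\|\vec g\|_{E}^2 - \text{(quadratic correction)} + \sum_k \lambda_k^{-1}\langle \text{something localized}, \vec g\rangle$, and uses the coercivity of $L_{\lambda_k}$ on the orthogonal complement (after accounting for the negative direction, controlled by the orthogonality conditions and the modulation equations) to show a differential inequality forcing $\|\vec g(t)\|_E \lesssim t^{-1/3}$ on a time interval $[T_n,\infty)$ with constants uniform in the approximation parameter $T_n$. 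Here the scales must be well separated ($\lambda_k \ll |z_k - z_j|$), which holds automatically since $\lambda_k\to 0$ while the $z_k$ are fixed, so the cross terms in the localized virial are genuinely lower order.

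The third step is to pass to the limit: take a sequence $T_n\to\infty$, solve \eqref{eq:nlw} backward from $T_n$ with data equal to the pure multi-bubble ansatz at time $T_n$, obtain uniform bounds on $[T_0, T_n]$ from step two, and extract a weak limit; local well-posedness in the energy space plus the uniform bounds upgrade weak convergence to a genuine solution on $[T_0,\infty)$ satisfying the stated decomposition. A final continuity/translation argument extends the solution down to $t>0$.

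The main obstacle I expect is \textbf{closing the bootstrap for the scaling parameters}: unlike the case of bounded solitons, here each $\lambda_k(t)\to 0$, the linearized operators degenerate, and the scaling direction is a genuine (non-decaying, non-spectral-gap) zero mode, so controlling $\dot\lambda_k$, $\ddot\lambda_k$ and matching them to the $K\times K$ interaction system requires a delicate choice of the virial weight at each scale and careful bookkeeping of the error terms, in particular showing that the $t^{-1/3}$ loss in $\vec g$ does not feed back destructively into the modulation equations. The non-radial, multi-point geometry also means the localized virial cutoffs for different bubbles must be chosen at intermediate spatial scales $\sqrt{\lambda_k}\cdot|z_k-z_j| \ll R_k \ll |z_k - z_j|$, and verifying that these cutoffs are mutually compatible is the delicate technical point.
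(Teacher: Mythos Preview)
Your outline captures the broad architecture correctly (backward construction, modulation, energy--virial functional, compactness), but it has a genuine gap concerning the \emph{unstable directions}, and this gap would prevent the bootstrap from closing.

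Each bubble contributes an exponentially unstable mode: the linearized operator $L = -\Delta - f'(W)$ has a negative eigenvalue $-\nu^2$ with eigenfunction $\cY$, so around each $W_{\lambda_k}$ there is a direction along which the linearized flow grows like $\exp(\nu t/\lambda_k)$. These $K$ directions are \emph{not} killed by orthogonality conditions or by the modulation equations, contrary to what you write (``the negative direction, controlled by the orthogonality conditions and the modulation equations''). In addition, the reduced ODE system for the scalings has a further one-dimensional backward instability in the radial variable $r = |\bs\lambda|$: the trajectory $r = |\bs c|t^{-2}$ is a saddle for the effective first-order system, so a generic perturbation of $r(T)$ will drift away from it as $t$ decreases. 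Altogether there are $K+1$ genuinely unstable parameters. Consequently your step~3, ``solve backward from $T_n$ with data equal to the pure multi-bubble ansatz,'' cannot work as stated: for a generic such data the bootstrap will break in finite backward time. The paper instead builds a $(K{+}1)$-parameter family of well-prepared initial data (Lemma~\ref{lem:init-data}), tracks the unstable coefficients $a_k^-$ and the quantity $|\bs\lambda| - |\bs c|t^{-2}$ inside the bootstrap (estimate~\eqref{eq:brouwer-boot}), and selects the correct data by a Brouwer/no-retraction argument driven by a transversality computation.

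A second point your outline misses is \emph{why} the remaining $K-1$ tangential scaling directions (i.e.\ $\bs\theta = \bs\lambda/|\bs\lambda|$) are stable rather than neutral. The paper chooses $\bs c$ so that $\bs c/|\bs c|$ is a \emph{global minimum} of the interaction potential $V$ on $\bS_+^{K-1}$ (Lemma~\ref{le:l}); this minimality is what makes the Lyapunov functional $\cF = \tfrac12 r^{-3}|\bs b^\perp|^2 + V(\bs\theta)$ monotone and closes the control of $\bs\theta$ in Section~\ref{s:3.5}. Merely solving the algebraic system $\bs B(\bs c) = -6\bs c$, as you suggest, does not suffice. Finally, to reach the precision needed in the energy estimate one must add an explicit corrector $P$ built from auxiliary profiles $Q,S$ solving $LQ$, $LS$ equal to prescribed sources (Lemma~\ref{le:8}); the raw sum of bubbles is too crude for~\eqref{eq:ansatz-err-3}.
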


This result complements the above mentioned articles, 
providing an example of non radial infinite time multiple bubbling in dimension $5$,
in a context where radial multiple bubbling does not seem possible.
Observe that the solutions constructed in Theorem~\ref{thm:constr-N5} only contain bubbles, without any linear remainder, like in~\cite{JJwave,MMwave1}.
Though we do not address uniqueness nor classification questions in this article, we conjecture that $t^{-2}$ is the only possible infinite time blow-up rate for such distant blowing up multiple bubbles.
Theorem~\ref{thm:constr-N5} holds for any set of concentration points $\{z_k\}$,
but the constants $\{c_k\}$ then strongly depend on this choice.
Indeed, in our proof, the determination of suitable constants $\{c_k\}$ is related to the global minimum of some function depending on the distances between the solitons (see Lemma~\ref{le:l}).
Our method of proof should extend to higher space dimensions, however we do not address here the existence of suitable constants $\{c_k\}$ for $N\geq 6$.
We refer to Remark~\ref{rk:heat} for more comments on $\{c_k\}$.

Historically, for nonlinear dispersive equations, the construction of solutions blowing up in finite time at $K$ given points using minimal bubbles was initiated in the case of the mass-critical nonlinear Schr\"odinger equation in~\cite{Mcmp};
see also \cite{MR} for multiple bubble infinite time blow-up.
We refer to~\cite{CM2,MMR2} for recent analogous results for the mass-critical generalized Korteweg-de Vries equation.

Bubbling phenomena were also considered for other energy-critical dispersive or wave models, like 
the wave maps \cite{JJwave,JL,KSTmaps,Pi} and the energy-critical nonlinear Schr\"odinger equation in \cite{JJnls}.
In the parabolic setting, for the energy-critical heat equation in dimension $5$,
 we mention some type-II finite time blow-up results~\cite{dPMW,dPMW3,FHV,Sch}, and infinite time blow-up results~\cite{CDM,dPMW2,Ha}. See Remark~\ref{rk:heat} for a qualitative comparison between results in~\cite{CDM} and Theorem~\ref{thm:constr-N5}.

\subsection{Notation}\label{sec:notation}
In this paper, $\bS^J$ denotes the unit sphere of $\bR^{J+1}$ and $\bar \cB_{\bR^J}$ denotes the unit closed ball of
$\bR^J$. We denote by $\cB(z,r)$ the ball of $\bR^5$ of center $z$ and radius $r\geq 0$.

The bracket $\langle \cdot,\cdot \rangle$ denotes the distributional pairing and the scalar product in $L^2$
and $L^2\times L^2$.

We define a smooth radial cut-off function $\chi$ satisfying $\chi(x) = 0$ for $|x| \geq \frac 23$ and $\chi(x) = 1$ for $|x| \leq \frac 12$ and $0\leq \chi(x)\leq 1$ for $\frac 12 \leq x\leq \frac 23$.

For a function $v:\bR^5\to \bR$ and $\lambda>0$, set
\[
v_{\underline\lambda}(x) := \frac1{\lambda^{\frac 52}} v\left(\frac x\lambda\right),\quad
v_\lambda(x) := \frac1{\lambda^{\frac 32}} v\left(\frac x\lambda\right).
\]
Define
\[\underline \Lambda =\frac 52+x\cdot \nabla,\quad \Lambda =\frac 32+x\cdot \nabla.
\]
For $\vec g=(g,\dot g)$, we denote $\|\vec g\|_\cE=\|\vec g\|_{\dot H^1\times L^2}$. Let
\[
X := (\dot H^1 \cap \dot H^2) \times (L^2 \cap \dot H^1).
\]

\subsection{Finite dimensional dynamics}
\label{ssec:formel1}

Let $ \zz_1,\ldots,\zz_K$ be $K$ points of $\bR^5$ distinct two by two. In this formal discussion, we neglect possible translations of the bubbles and concentrate on the focusing behavior
(this reduction will be justified by the control of translation parameters in the proof of Theorem~\ref{thm:constr-N5}).

For $\bla = (\lambda_1, \ldots, \lambda_K)\in (0,\infty)^K$ and $\bb= (b_1, \ldots, b_K)\in\bR^K$, define
\begin{equation}\label{ansatz}
\WW(\bla, \bb) := \sum_{k} \Big(W_{\lambda_k}(\cdot - \zz_k), b_k \lambda_k^{-1}\Lambda W_{\lambda_k}(\cdot - \zz_k)\Big).
\end{equation}
Here, and in what follows, unless otherwise indicated, sums $\sum_k$ are for indices $k\in\{1,\ldots K\}$.
\begin{remark}
Note that $(W, b\Lambda W)$ is the first-order asymptotic expansion of the self-similar blow-up profile $\WW_b$ for small $b$.
\end{remark}

We take a small number $\epsilon > 0$ and consider the manifold
\begin{equation}
\cM := \{\WW(\bla, \bb): |\bla| + |\bb| < \epsilon\}.
\end{equation}
On this manifold, $(\bla, \bb)$ is a natural system of coordinates. The associated basis of the tangent space is given by
\begin{equation}
\label{eq:basis-TM}
\partial_{\lambda_k} = -\big(\lambda_k^{-1}\Lambda W_{\lambda_k}(\cdot - \zz_k),b_k\lambda_k^{-2}\underline\Lambda\Lambda W_{\lambda_k}(\cdot - \zz_k)\big),\quad
\partial_{b_k} = \big(0, \lambda_k^{-1}\Lambda W_{\lambda_k}(\cdot - \zz_k)\big).
\end{equation}
We wish to compute the restriction of the flow to $\cM$. The Hamiltonian function is
\begin{equation}
E(\bla, \bb) := E(\WW(\bla, \bb)).
\end{equation}
Let
\begin{equation}
\begin{pmatrix} \MMA (\bla, \bb) & \GG(\bla, \bb) \\
-\GG(\bla, \bb) & \MMB(\bla, \bb)\end{pmatrix} =
\begin{pmatrix}(\MMA_{jk})_{j, k=1}^{K} & (\GG_{jk})_{j, k=1}^{K} \\
({-}\GG_{jk})_{j, k=1}^{K} & (\MMB_{jk})_{j, k=1}^{K}\end{pmatrix}\end{equation}
be the matrix of the symplectic form $\omega$ in this basis,
in other words for $j, k \in \{1, \ldots, K\}$,
\begin{align}
& \begin{aligned} \MMA_{jk} = \omega(\partial_{\lambda_j}, \partial_{\lambda_k})& = \lambda_j^{-1}\lambda_k^{-1}\big(b_j\lambda_j^{-1}\la \underline\Lambda\Lambda W_{\lambda_j}(\cdot - \zz_j),
\Lambda W_{\lambda_k}(\cdot - \zz_k)\ra \\
&\quad - b_k\lambda_k^{-1}\la\Lambda W_{\lambda_j}(\cdot - \zz_j), \underline \Lambda\Lambda W_{\lambda_k}(\cdot - \zz_k)\ra\big),\end{aligned}\\
& \GG_{j, k} = \omega(\partial_{\lambda_j}, \partial_{b_k}) = \lambda_j^{-1}\lambda_k^{-1}\la \Lambda W_{\lambda_j}(\cdot - \zz_j), \Lambda W_{\lambda_k}(\cdot - \zz_k)\ra, \\
& \MMB_{j, k} = \omega(\partial_{b_j}, \partial_{b_k})=0.
\end{align}
The motion with constraints is given by the equation
\begin{equation}
\label{eq:constrained}
\begin{pmatrix}\bla' \\ \bb'\end{pmatrix} = \begin{pmatrix}\bs\phi(\bla, \bb) \\ \bs\psi(\bla, \bb)\end{pmatrix} := \begin{pmatrix}\MMA(\bla, \bb) & \GG(\bla, \bb) \\
- \GG(\bla, \bb) & \MMB(\bla, \bb)\end{pmatrix}^{-1}\begin{pmatrix}\partial_{\bla}E(\bla, \bb) \\ \partial_{\bb}E(\bla, \bb)\end{pmatrix}.
\end{equation}
In a suitable regime for $(\bla,\bb)$, we claim
\begin{align}
\partial_{b_k}E(\bla, \bb) &\simeq \|\Lambda W\|_{L^2}^2 b_k \label{eq:dlE} \\
\partial_{\lambda_k}E(\bla, \bb)& \simeq-\|\Lambda W\|_{L^2}^2 B_k(\bla)
\label{eq:dbE}
\end{align}
where
\begin{equation}
B_k(\bla)=- \kappa \lambda_k^\frac{1}{2}\sum_{j\neq k}\Big\{\lambda_j^\frac{3}{2}|\zz_j - \zz_k|^{-3}\Big\}
\quad \mbox{and}\quad \kappa = -\frac73 15^\frac32 \frac {\la\Lambda W, W^\frac 43\ra}{\|\Lambda W\|_{L^2}^2} = \frac{128\sqrt{15}}{7\pi}.
\label{eq:Bk}
\end{equation}
We briefly justify~\eqref{eq:dlE}-\eqref{eq:dbE}.
Using the equation $\Delta W + f(W) = 0$, we have
\begin{equation}
\vD E(\WW(\bla, \bb)) = \Big({-}f\Big(\sum_k W_{\lambda_k}(\cdot - \zz_k)\Big) + \sum_k f(W_{\lambda_k}(\cdot - \zz_k)), \sum_k {b_k}\lambda_k^{-1}\Lambda W_{\lambda_k}(\cdot - \zz_k)\Big).
\end{equation}
We consider cases where $\{\lambda_k\}$, respectively $\{b_k\}$, are asymptotically 
of the size $\lambda(t)>0$, respectively $b(t)$, up to fixed multiplicative constants, where 
$\lambda(t)\to 0$ and $(b/\lambda)(t)\to 0$ as $t\to \infty$.
The first condition means concentration (or ``grow up'') of the solitons while the second condition is natural when searching
polynomial regimes for $\lambda$, since $b$ is related to the time derivative of~$\lambda$.
In such regime, we can easily bound cross terms. 
In particular, from computations similar to that of Lemma~\ref{cl:inter} below, we see that
\begin{equation}
\partial_{b_k}E(\bla, \bb) = \|\Lambda W\|_{L^2}^2 b_k+ O(b\lambda),
\end{equation}
which justifies \eqref{eq:dlE}.

To justify~\eqref{eq:dbE}, we consider again the above expression of $\vD E(\WW(\bla, \bb))$.
The inner product of the first components yields
some constants times $\lambda^2$; the second components yield a constant times $b^2$.
Since we focus on the case $b /\lambda\ll 1$, this second contribution will be negligible with respect to the first.
We thus focus on the first components. We expect the main contribution to come from
\begin{equation}
\sum_{j \neq k}\la f'(W_{\lambda_k}(\cdot - \zz_k))W_{\lambda_j}(\cdot - \zz_j), {\lambda_k^{-1}}\Lambda W_{\lambda_k}(\cdot - \zz_k)\ra.
\end{equation}
Because of the asymptotics $W(x) \simeq 15^{\frac 32}|x|^{-3}$ as $|x| \to \infty$, the factor $W_{\lambda_j}(\cdot - \zz_j)$ can be replaced by the following expression independent of $x$
\begin{equation}
W_{\lambda_j}(\zz_k - \zz_j) = \lambda_j^{-\frac 32}W(\lambda_j^{-1}(\zz_k - \zz_j))
\simeq 15^{\frac 32} \lambda_j^\frac{3}{2}|\zz_k - \zz_j|^{-3}.
\end{equation}
Next, we have
\begin{equation}
\la f'(W_{\lambda_k}(\cdot - \zz_k)), {\lambda_k^{-1}}\Lambda W_{\lambda_k}(\cdot - \zz_k)\ra
= \lambda_k^\frac{1}{2} \la f'(W),\Lambda W\ra=\frac 73 \lambda_k^\frac{1}{2} \la W^{\frac 43},\Lambda W\ra,
\end{equation}
so we obtain \eqref{eq:dbE}.

From~\eqref{eq:dlE}-\eqref{eq:dbE}, we compute the main order terms of $\bs\phi$ and $\bs\psi$.
Again, estimates of cross terms as in the proof of Lemma~\ref{cl:inter}, yield
\begin{equation}
 \GG(\bla, \bb) = \|\Lambda W\|_{L^2}^2\,\tx{Id} + O(\lambda ).
\end{equation}
Thus, using also $\MMB(\bla, \bb)=0$ and the fact that $\MMA(\bla, \bb)$ is of size $b/\lambda$, we obtain
\begin{equation}
\begin{pmatrix}\MMA(\bla, \bb) & \GG(\bla, \bb) \\
- \GG(\bla, \bb) & \MMB(\bla, \bb)\end{pmatrix}^{-1} =
\|\Lambda W\|_{L^2}^{-2} \begin{pmatrix}0 & -\tx{Id} \\ \tx{Id} & 0\end{pmatrix} + O(\lambda)+O(b/\lambda).
\end{equation}
Inserted in \eqref{eq:constrained}, these computations justify the introduction of the following formal system
for the parameters $(\bla, \bb)$:
\begin{equation}
\label{eq:lambda-b-formal}
\left\{\begin{aligned}
\lambda_k'(t) &= -b_k(t) \\
b_k'(t) &= B_k(\bla(t))\,.
\end{aligned}\right.
\end{equation}
By analogy with the differential equation $\lambda''=\lambda^2$, which admits the solution $\lambda(t)= 6 t^{-2}$, we look for a solution of \eqref{eq:lambda-b-formal} of the form
\begin{equation}\label{regime}
 \lambda_k(t)=c_k t^{-2} ,\quad b_k(t)= 2c_k t^{-3},
\end{equation}
for positive constants $c_k$. We need to check that the system \eqref{eq:lambda-b-formal} is actually satisfied for some choice of constants $\{c_k\}$.
The first equation is automatically satisfied by the above expression of $(\lambda_k,b_k)$ and the second one is equivalent to
\begin{equation}
\bs B (\bcc)=-6 \bcc
\end{equation}
where we denote
\begin{equation}
\bcc =(c_1,\ldots,c_K)\quad \mbox{and} \quad\bs B=(B_1,\ldots,B_K).
\end{equation}
We remark that this condition is related to the existence of a critical point
for the following function $V$:
\begin{equation}\label{def:V}
V: \bs \theta=(\theta_1,\ldots,\theta_K)\in \bS_+^{K-1}\mapsto V(\bs \theta)=-\frac 23\kappa \sum_{k}\sum_{j<k} \Big\{ \theta_j^{\frac 32}\theta_k^{\frac 32} |z_j-z_k|^{-3}\Big\},
\end{equation}
where the notation $\bS_+^{K-1}$ means
\[
\bS_+^{K-1}=\Big\{\bs\theta=(\theta_1,\ldots,\theta_K)\in [0,\infty)^K : \sum_{k=1}^K \theta_k^2=1\Big\}.
\]
For later purposes (see Remark~\ref{rk:heat} below), we select a global minimum of the function $V$.
\begin{lemma}\label{le:l}
The following holds
\begin{enumerate}
\item For any $r\geq 0$ and $\bs \theta\in \bS_+^{K-1}$,
\begin{equation}\label{eq:BV}
\bs B(r\bs \theta)=r^2 \nabla V(\bs \theta).
\end{equation}
\item The function $V$ has a global minimum on $\bS_+^{K-1}$, reached at least at a point $\uln{\bs\theta}\in \bS_+^{K-1}$
such that for all $k=1,\ldots,K$, $\uln \theta_k\in (0,1)$. Moreover,
\begin{equation}\label{eq:nV}
\uln n=-\uln {\bs \theta} \cdot \nabla V(\uln {\bs \theta})>0 \quad \mbox{satisfies}\quad
-\nabla V(\uln {\bs \theta})=\uln n \,\uln {\bs \theta}\,.
\end{equation}
\item For $\uln{\bs\theta}\in \bS_+^{K-1}$ and $\uln n>0$ as in {\rm (ii)}, define
\begin{equation}\label{def:c}
\bs c=\uln r \,\uln {\bs \theta} \quad \mbox{where}\quad
\uln r=\frac{6}{-\uln {\bs \theta} \cdot \nabla V(\uln {\bs \theta})}
=\frac {6}{\uln n}.
\end{equation}
Then, it holds $\bs B(\bs c)=-6 \bs c$.
\end{enumerate}
\end{lemma}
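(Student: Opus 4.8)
The plan is to treat the three parts in order, with (ii) being the analytic heart of the argument and (i), (iii) essentially bookkeeping.

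For part (i): I would simply compute $\bs B(r\bs\theta)$ from the explicit formula \eqref{eq:Bk} and $\nabla V(\bs\theta)$ from \eqref{def:V}. From \eqref{eq:Bk}, $B_k(r\bs\theta) = -\kappa (r\theta_k)^{1/2}\sum_{j\neq k}(r\theta_j)^{3/2}|z_j-z_k|^{-3} = -\kappa r^2 \theta_k^{1/2}\sum_{j\neq k}\theta_j^{3/2}|z_j-z_k|^{-3}$. On the other hand, differentiating $V(\bs\theta) = -\tfrac23\kappa\sum_k\sum_{j<k}\theta_j^{3/2}\theta_k^{3/2}|z_j-z_k|^{-3}$ with respect to $\theta_k$ picks out exactly the terms in the double sum containing $\theta_k$, producing $\partial_{\theta_k}V(\bs\theta) = -\kappa\,\theta_k^{1/2}\sum_{j\neq k}\theta_j^{3/2}|z_j-z_k|^{-3}$ (the factor $\tfrac23$ cancels against the $\tfrac32$ from the power rule, and symmetrizing $j<k$ into $j\neq k$ accounts for the halving). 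Comparing the two expressions gives \eqref{eq:BV}. Note this computation is exactly the one sketched informally in the text before the statement, so it is routine.

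For part (ii): The set $\bS_+^{K-1}$ is compact and $V$ is continuous, so a global minimum $\uln{\bs\theta}$ exists. The key point is that the minimum lies in the open positive ``octant'', i.e.\ $\uln\theta_k>0$ for all $k$. Here I would argue that $V$ is strictly negative somewhere (e.g.\ at the barycenter-type point with all coordinates equal to $K^{-1/2}$, since $\kappa>0$ and all the $|z_j-z_k|^{-3}$ are positive), whereas $V$ vanishes on the boundary faces where at least two coordinates vanish in such a way that no pair $j<k$ has both $\theta_j,\theta_k>0$ --- in particular $V=0$ at the vertices $e_k$. More carefully: if $\uln\theta_k=0$ for some $k$, I would perturb by moving a small mass $\delta$ into the $k$-th coordinate, taking $\theta_k = \delta$ and rescaling the others by $\sqrt{1-\delta^2}$ to stay on the sphere; the change in $V$ is $-\tfrac23\kappa\,\delta^{3/2}\sum_{j\neq k}\uln\theta_j^{3/2}|z_j-z_k|^{-3}\cdot(1+o(1))$ at leading order in $\delta$, which is strictly negative for small $\delta$ since at least one $\uln\theta_j>0$ (because $\uln{\bs\theta}\neq 0$). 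This contradicts minimality, so all $\uln\theta_k>0$; and since $\sum\uln\theta_k^2=1$ forces each $\uln\theta_k<1$, we get $\uln\theta_k\in(0,1)$. Then, since $\uln{\bs\theta}$ is an interior (relative to the constraint) critical point, the Lagrange multiplier rule on the sphere gives $\nabla V(\uln{\bs\theta}) = \mu\,\uln{\bs\theta}$ for some scalar $\mu$; taking the inner product with $\uln{\bs\theta}$ identifies $\mu = \uln{\bs\theta}\cdot\nabla V(\uln{\bs\theta}) = -\uln n$, giving $-\nabla V(\uln{\bs\theta}) = \uln n\,\uln{\bs\theta}$. It remains to check $\uln n>0$: by Euler's identity for the homogeneous-degree-$3$ function $V$ (each term is homogeneous of degree $\tfrac32+\tfrac32=3$), $\uln{\bs\theta}\cdot\nabla V(\uln{\bs\theta}) = 3V(\uln{\bs\theta})$, and since $V(\uln{\bs\theta})<0$ (it is the global minimum and $V$ is negative somewhere) we get $\uln n = -3V(\uln{\bs\theta})>0$.

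For part (iii): This is a direct consequence of (i) and (ii). With $\bs c = \uln r\,\uln{\bs\theta}$ and $\uln r = 6/\uln n$, part (i) gives $\bs B(\bs c) = \bs B(\uln r\,\uln{\bs\theta}) = \uln r^{\,2}\,\nabla V(\uln{\bs\theta})$, and part (ii) gives $\nabla V(\uln{\bs\theta}) = -\uln n\,\uln{\bs\theta}$, so $\bs B(\bs c) = -\uln r^{\,2}\uln n\,\uln{\bs\theta} = -\uln r\,(\uln r\,\uln n)\,\uln{\bs\theta} = -6\,\uln r\,\uln{\bs\theta} = -6\bs c$, using $\uln r\,\uln n = 6$.

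The main obstacle is the interior-minimum claim in (ii): one must rule out all boundary faces of $\bS_+^{K-1}$, and the clean way to do this is the one-parameter perturbation argument above showing that $V$ strictly decreases when mass is introduced into a vanishing coordinate, exploiting the sub-linear exponent $\tfrac32>1$ which makes the first variation come with a fractional power $\delta^{3/2}$ that dominates the $O(\delta^2)$ correction from renormalizing the sphere constraint. Everything else --- existence of the minimum, the Lagrange condition, Euler's identity, and the final algebra --- is standard.
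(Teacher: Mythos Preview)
Your proposal is correct and follows essentially the same route as the paper: direct computation for (i), compactness plus a boundary-perturbation argument for the interior-minimum claim in (ii), Lagrange multipliers for the alignment $-\nabla V(\uln{\bs\theta})=\uln n\,\uln{\bs\theta}$, and straightforward algebra for (iii). The only cosmetic differences are that the paper parametrizes the perturbation as $\theta_k=a^{2/3}$ (so that $v(a)$ is differentiable with $v'(0)<0$) rather than $\theta_k=\delta$, and obtains $\uln n>0$ from the sign of each $B_k(\uln{\bs\theta})$ via (i) rather than from Euler's identity $\uln{\bs\theta}\cdot\nabla V(\uln{\bs\theta})=3V(\uln{\bs\theta})<0$; both variants are equivalent in substance.
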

\begin{proof}
 (i) follows directly from the definitions of $V$ and $\bs B$.

Proof of (ii). As a nonconstant nonpositive continuous function defined on the compact set $\bS^{K-1}_+$,
the function $V$ has a negative global minimum.
Let $\bs\theta=(\theta_1,\ldots,\theta_K)\in \bS^{K-1}_+$ be such that $\theta_k=0$, for some $k=1,\ldots,K$.
For any $a\in [0,1)$, set
\begin{equation}
\bs \theta(a)=((1-a^{\frac 43})^{\frac 12}\theta_1,\ldots,a^{\frac 23},\ldots,(1-a^{\frac 43})^{\frac 12}\theta_K),\quad
v(a)=V(\bs \theta(a)),
\end{equation}
where the $a^{\frac 23}$ above is located at the $k$th row of the line vector $\bs \theta(a)$.
Observe that $\bs \theta(a)\in \bS^{K-1}_+$ and 
\begin{equation}
v(a)=-\frac 23\kappa\Big[ (1-a^{\frac 43})^{\frac 32}\sum_{\genfrac{}{}{0pt}{1}{j,l\neq k}{j<l}}\Big\{ \theta_j^{\frac 32} \theta_l^{\frac 32}|z_j-z_l|^{-3} \Big\}
+a (1-a^{\frac 43})^{\frac 34} \sum_{j\neq k} \Big\{ \theta_j^{\frac 32}|z_j-z_k|^{-3}\Big\}\Big].
\end{equation}
A simple computation shows that $v'(0)<0$, which proves that the global minimum of the function $V$ on $\bS^{K-1}_+$ is not reached at such $\bs\theta$.

Consider $\uln {\bs \theta}\in \bS_+^{K-1}$ any point of global minimum for $V$.
It follows that there exists
$\uln n\in \bR$ such that $-\nabla V(\uln {\bs \theta})=\uln n\, \uln {\bs \theta}$.
In particular, taking the scalar product by $\uln {\bs \theta}$, we find $-\uln {\bs \theta}\cdot \nabla V(\uln {\bs \theta})=\uln n$, and by~(i) and the expression of $\bs B$, it holds $\uln n>0$.

Proof of (iii). 
Let $\bs c=\uln r \,\uln {\bs \theta}$ where $\uln r$ is defined as in~\eqref{def:c}.
By (i), we have $\bs B(\bcc)=\uln r^2 \nabla V(\uln {\bs \theta})$.
Using also (ii), we obtain $\bs B(\bcc)= -6 \bcc$.
\end{proof}

\begin{remark}\label{rk:heat}
The proof of Theorem~\ref{thm:constr-N5} requires the fact that
$\bcc$ is related to a point of local minimum of $V$ in the interior of $\bS^{K-1}_+$. See Section~\ref{s:3.5}.
The same question in dimension $N\geq 5$ involves the function 
\[
V(\bs \theta)=-C\sum_{k}\sum_{j<k} \Big\{ \theta_j^{\frac {N-2}2}\theta_k^{\frac {N-2}2} |z_j-z_k|^{2-N}\Big\},
\]
where $C>0$. In the proof of (ii) of Lemma~\ref{le:l},
the dimension $N=6$ seems critical in some sense and the fact the global minimum of $V$ is reached only at the interior of $\bS^{K-1}_+$ cannot be proved in the same way for $N\geq 6$. We do not pursue this issue here.

Though some configurations with changing signs seem possible, the proof also uses the fact that the bubbles all have the same sign.
Indeed, only nonlinear interactions of bubbles of same sign have a focusing effect.
See for instance the nonexistence result in \cite{JJnon}.

It is interesting to compare the situation to that of the energy critical nonlinear heat equation considered in \cite{CDM}.
For the latter equation, the bubbling phenomenon involves the same function~$W$. However, soliton-soliton interactions have opposite effects. In~\cite{CDM}, the Dirichlet boundary condition has a focusing effect on the various positive bubbles, and the assumption on the locations of the concentration points ensures that the defocusing effect of the soliton-soliton interactions is lower than the focusing effect of the boundary condition.
This is why the system obtained there (formula (2.19) in \cite{CDM}) is different; in particular, dimension $4$ seems critical and all dimensions higher than $5$ can be treated in a unified way.
\end{remark}

The strategy of the proof of Theorem~\ref{thm:constr-N5} is to construct a solution of \eqref{eq:nlw} converging as 
$t\to \infty$ to the ansatz \eqref{ansatz} with parameters $(\bla,\bb)$ as in \eqref{regime}
and $\bcc$ given by Lemma~\ref{le:l}.

In the next section, we recall coercivity results useful to apply the energy method in a neighborhood of the sum of decoupled solitons. In Section 3, we prove Theorem~\ref{thm:constr-N5}.
\subsection{Acknowledgements}
JJ was partially supported by ANR-18-CE40-0028 project ESSED.
\section{Coercivity results}
\subsection{Single potential}
\label{sec:coercivity}
Linearizing the system \eqref{hamil} around $\vec W=(W,0)$, one obtains
\begin{equation}
\partial_t \gv=J\circ \vD^2 E(\vec W) \vec g=\begin{pmatrix} 0 & \Id\\ -L& 0\end{pmatrix} \vec g,
\end{equation}
where $L$ is the following operator
\[
L g : = -\Delta g - f'(W) g = -\Delta g - \frac 73 W^{\frac43} g.
\]
For $g \in \dot H^1(\bR^5)$ we have the associated quadratic form
\[
\la g, Lg\ra := \int_{\bR^5}\big(|\grad g|^2 - f'(W)g^2\big)\ud x.
\]
\begin{lemma}[{\cite[Appendix D]{Rey}}]\label{le:rey}
If $0\neq g\in \dot H^1(\bR^5)$ satisfies $\la \Delta W, g\ra = \la \Delta \Lambda W, g\ra = \la \Delta \grad W, g\ra = 0$,
then $\la g, Lg\ra > 0$.
\end{lemma}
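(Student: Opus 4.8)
The plan is to recast the three orthogonality conditions as $\dot H^1$-orthogonality to a fixed $7$-dimensional subspace, deduce nonnegativity of the quadratic form from the variational characterization of $W$, and then upgrade this to strict positivity using the nondegeneracy of $L$. First I would reformulate the hypotheses. Write $(u,v)_{\dot H^1}:=\la\grad u,\grad v\ra$. From $\Delta W+W^{7/3}=0$ and the standard identities $L\Lambda W=0$ and $L\grad W=0$ (componentwise), which come from differentiating the equation along the scaling and translation symmetries, one has for $g\in\dot H^1(\bR^5)$
\[
\la\Delta W,g\ra=-(W,g)_{\dot H^1}=-\la W^{7/3},g\ra,\qquad
\la\Delta\Lambda W,g\ra=-(\Lambda W,g)_{\dot H^1},\qquad
\la\Delta\grad W,g\ra=-(\grad W,g)_{\dot H^1}.
\]
All these brackets are legitimate $\dot H^{-1}$–$\dot H^1$ pairings: in dimension $5$ one has $W(x)\sim 15^{3/2}|x|^{-3}$, so the relevant gradients lie in $L^2$ while $W^{7/3},W^{4/3}\Lambda W,W^{4/3}\grad W\in L^{10/7}\hookrightarrow\dot H^{-1}$, which also justifies the integrations by parts. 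Thus the hypotheses say exactly that $g\perp_{\dot H^1}V$ with $V:=\spn\{W,\Lambda W,\partial_{x_1}W,\dots,\partial_{x_5}W\}$; note $\dim V=7$, matching $\dim\ker L=6$ plus the single negative direction of $L$.

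Step one: I would show $\la g,Lg\ra\ge 0$ using only the first condition $\la W^{7/3},g\ra=0$. Since $W$ attains the best constant in \eqref{sobolev}, it minimizes $u\mapsto\|\grad u\|_{L^2}^2$ over the $C^2$ manifold $M:=\{u\in\dot H^1:\|u\|_{L^{10/3}}^{10/3}=\|W\|_{L^{10/3}}^{10/3}\}$, the Lagrange multiplier being pinned by $-\Delta W=W^{7/3}$. Writing the second-order necessary condition for this constrained minimum at $W$ in a tangent direction $g$, i.e. with $\la W^{7/3},g\ra=0$, gives after a short computation precisely $\|\grad g\|_{L^2}^2-\tfrac73\la W^{4/3},g^2\ra\ge 0$, that is $\la g,Lg\ra\ge 0$. (Equivalently one may quote the classical spectral picture of $L$: one simple negative eigenvalue, $\ker L=\spn\{\Lambda W,\partial_{x_i}W\}$, nonnegative spectrum otherwise; since $\la W,LW\ra=-\tfrac43\|W\|_{L^{10/3}}^{10/3}<0$ and $\{\la\Delta W,\cdot\ra=0\}$ has codimension one, the form is $\ge 0$ on that hyperplane.)

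Step two: strict positivity, which is where the only real obstacle sits. Suppose $g\neq 0$ but $\la g,Lg\ra=0$. Then $g$ minimizes $h\mapsto\la h,Lh\ra$ over $H:=\{h\in\dot H^1:\la W^{7/3},h\ra=0\}$, so expanding $0\le\la g+th,L(g+th)\ra=2t\la Lg,h\ra+t^2\la h,Lh\ra$ for $h\in H$, $t\in\bR$ forces $\la Lg,h\ra=0$ on $H$; since $\la W^{7/3},W\ra=\int W^{10/3}\neq 0$, this means $Lg$ is a scalar multiple of $W^{7/3}$, and using $LW=-\tfrac43 W^{7/3}$ we obtain $L(g+\alpha W)=0$ for some $\alpha\in\bR$. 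Now comes the one genuinely nontrivial input, the nondegeneracy of the ground state in the energy space: $\ker L\cap\dot H^1=\spn\{\Lambda W,\partial_{x_1}W,\dots,\partial_{x_5}W\}$ (this is exactly \cite[Appendix D]{Rey}; equivalently, the only $\dot H^1$ solutions of $L\phi=0$ are generated by scaling and translation). Hence $g\in V$; but $g\perp_{\dot H^1}V$ together with $g\in V$ force $\|\grad g\|_{L^2}^2=(g,g)_{\dot H^1}=0$, so $g$ is constant and therefore $g=0$ in $\dot H^1(\bR^5)$ — a contradiction. Thus $\la g,Lg\ra>0$. Apart from this nondegeneracy statement, the argument is soft, the only other point requiring care being the (routine) verification, via the dimension-$5$ decay of $W$, that all the pairings above are well defined.
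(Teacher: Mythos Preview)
The paper does not supply its own proof of this lemma; it simply cites \cite[Appendix~D]{Rey}. Your argument is correct and is essentially the standard route: recast the hypotheses as $\dot H^1$-orthogonality to $V=\spn\{W,\Lambda W,\grad W\}$, use the variational characterization of $W$ as Sobolev extremizer to get $\la g,Lg\ra\ge 0$ on the tangent hyperplane $\{\la W^{7/3},\cdot\ra=0\}$, and then in the equality case deduce $Lg\in\spn\{W^{7/3}\}$, hence $g\in V$, hence $g=0$. The only nontrivial ingredient you invoke is the nondegeneracy $\ker L\cap\dot H^1=\spn\{\Lambda W,\grad W\}$, which is precisely the substance of Rey's appendix; so your write-up is really a reduction of the stated lemma to that kernel computation rather than an independent proof, but this is exactly how the result is typically packaged and there is no gap.
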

Since $\la W, LW\ra = -\frac 43\int_{\bR^5}W^{7/3}\ud x < 0$, the operator $L$ has at least one negative eigenvalue.
Denote the smallest eigenvalue $-\nu^2$ $(\nu >0)$ and the corresponding
eigenfunction $\cY$, normalized so that $\|\cY\|_{L^2} = 1$ and $\cY(x) > 0$ for all $x \in \bR^5$.
The facts that $\cY(x) \neq 0$ for all $x \in \bR^5$ and that $\cY$ has exponential decay follow from the general theory of Schr\"odinger operators.

Denote
\begin{equation}
\wt L := L + \nu^2 \la \cY, \cdot \ra \cY,\qquad \la g, \wt L g\ra := \int_{\bR^5}\big(|\grad g|^2 - f'(W)g^2\big)\ud x + \nu^2\la \cY, g\ra^2.
\end{equation}
\begin{lemma}
\label{lem:neg-space}
For all $g \in \dot H^1(\bR^5)$, it holds $\la g, \wt L g\ra \geq 0$. Moreover, $\la g, \wt L g\ra = 0$
if and only if $g \in \spn(\cY, \Lambda W, \partial_{x_1} W, \ldots, \partial_{x_5} W))$.
\end{lemma}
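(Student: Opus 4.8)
The plan is to deduce Lemma~\ref{lem:neg-space} from the two facts already available: the positivity statement of Lemma~\ref{le:rey} on a codimension-three subspace, and the fact that $-\nu^2$ is the smallest eigenvalue of $L$ with simple one-dimensional eigenspace spanned by $\cY$. First I would record the spectral decomposition of $L$ as a self-adjoint operator on $L^2$: it has exactly one negative eigenvalue $-\nu^2$ (simplicity follows because $\cY>0$ is a ground state of the Schrödinger operator), a kernel, and the rest of the spectrum positive, so that on $\{\cY\}^\perp$ the form $\la g,Lg\ra$ is $\geq 0$. Since $\wt L = L + \nu^2\la\cY,\cdot\ra\cY$ simply removes the negative direction, for any $g$ write $g = \alpha\cY + h$ with $h\perp\cY$ (in $L^2$); then $\la g,\wt Lg\ra = \alpha^2(-\nu^2+\nu^2) + \la h, Lh\ra = \la h, Lh\ra\geq 0$, because $L\cY=-\nu^2\cY$ means $\cY$ is $L$-orthogonal to $\{\cY\}^\perp$. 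This gives nonnegativity immediately.

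For the equality characterization, suppose $\la g,\wt Lg\ra=0$. With the decomposition above this forces $\la h,Lh\ra = 0$ where $h\perp\cY$. The point is then to show that $h$ lies in $\ker L = \spn(\Lambda W, \partial_{x_1}W,\ldots,\partial_{x_5}W)$; combined with $g=\alpha\cY+h$ this yields $g\in\spn(\cY,\Lambda W,\partial_{x_1}W,\ldots,\partial_{x_5}W)$, and conversely every such $g$ is easily checked to annihilate the form (using $L(\partial_{x_i}W)=0$, $L(\Lambda W)=0$, and $\la\cY, \Lambda W\ra=\la\cY,\partial_{x_i}W\ra=0$, the latter orthogonalities because $\Lambda W$ and $\partial_{x_i}W$ lie in $\ker L\perp\cY$). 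So the crux is: $h\perp\cY$ and $\la h,Lh\ra=0$ imply $h\in\ker L$. I would argue by contradiction using Lemma~\ref{le:rey}. Decompose $h = P h + (h - Ph)$ where $P$ is the orthogonal projection onto $\spn(\Delta W,\Delta\Lambda W,\Delta\partial_{x_i}W)$ in $\dot H^1$ (equivalently, impose the three orthogonality conditions of Lemma~\ref{le:rey}); if $h\notin\ker L$ then either $h$ has a nonzero component outside $\ker L$ satisfying those conditions, on which Lemma~\ref{le:rey} gives strict positivity, contradicting minimality — more precisely, one runs the standard minimization argument: the infimum of $\la g, Lg\ra$ over $\{g\perp\cY \text{ in }L^2, \|\grad g\|_{L^2}=1\}$ is attained, the minimizer satisfies an Euler–Lagrange equation $Lh = \mu(-\Delta h) + \beta\cY$, testing against $\cY$ gives $\beta$ in terms of $\mu$, and if the infimum were $0$ one shows $h$ must be a genuine null vector of $L$, hence in $\spn(\Lambda W,\partial_{x_i}W)$.

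Let me carry this out more carefully. Set $m := \inf\{\la g,Lg\ra : g\in\dot H^1,\ \la\cY,g\ra_{L^2}=0,\ \|\grad g\|_{L^2}=1\}$. By the decomposition argument $m\geq 0$, and I want $m>0$ unless the constraint set meets $\ker L$. If $m=0$, a minimizing sequence, after extracting a weak limit and using the compactness coming from the exponential decay and subcriticality of $f'(W)$ (the quadratic form $g\mapsto\int f'(W)g^2$ is weakly continuous on $\dot H^1$ by Rellich, since $f'(W)=\frac73 W^{4/3}$ decays), produces a minimizer $h_0$ with $\|\grad h_0\|_{L^2}=1$, $h_0\perp\cY$, $\la h_0,Lh_0\ra=0$. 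The Euler–Lagrange equation reads $Lh_0 = \beta\cY$ for some multiplier $\beta$ (the $-\Delta$ multiplier vanishes since $\la h_0,Lh_0\ra=0=m\|\grad h_0\|^2$); pairing with $\cY$ in $L^2$ gives $\la Lh_0,\cY\ra = -\nu^2\la h_0,\cY\ra = 0 = \beta$, so $Lh_0=0$, i.e. $h_0\in\ker L = \spn(\Lambda W,\partial_{x_i}W)$ — but these functions satisfy $\la\cY,\cdot\ra=0$ automatically, so there is no contradiction; rather this shows that whenever the form vanishes the vector is forced into $\spn(\cY)\oplus\ker L$. Applying this to our $h$ (which has $\la h,Lh\ra=0$, $h\perp\cY$, and is nonzero unless trivial) gives $h\in\ker L$, completing the proof. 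The main obstacle I anticipate is making the compactness in the minimization rigorous — ensuring the weak limit is nonzero and that the constraint $\|\grad g\|_{L^2}=1$ passes to the limit — though this is standard for Schrödinger-type forms with rapidly decaying potential; alternatively one can bypass the variational argument entirely by a direct spectral-theoretic computation, projecting onto the finite-dimensional span of $\cY$ and $\ker L$ and invoking Lemma~\ref{le:rey} on the orthogonal complement inside $\dot H^1$ to get a uniform positive lower bound there.
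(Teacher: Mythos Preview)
Your reduction $\la g,\wt L g\ra = \la h, Lh\ra$ for $g=\alpha\cY+h$, $h\perp\cY$, is correct, but the step that follows has a genuine gap. You assert that $L$ has \emph{exactly one} negative eigenvalue, so that $\la h,Lh\ra\geq 0$ on $\{\cY\}^\perp$; your parenthetical justification (``simplicity follows because $\cY>0$'') only gives that the \emph{lowest} eigenvalue is simple, not that there are no further negative eigenvalues above it. In the paper this uniqueness is recorded as a \emph{consequence} of the lemma (see the Remark immediately after), so taking it as input is circular. Your minimization argument does not repair this: the line ``By the decomposition argument $m\geq 0$'' refers back to the same unproved spectral claim. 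You also invoke $\ker L=\spn(\Lambda W,\partial_{x_1}W,\ldots,\partial_{x_5}W)$ without proof; this nondegeneracy is again equivalent to (and in the paper derived from) the lemma.

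The paper's proof avoids all of this by a single linear-algebraic decomposition: write $g=g_1+g_2$ with $g_1\in\spn(\cY,\Lambda W,\partial_{x_1}W,\ldots,\partial_{x_5}W)$ and $g_2$ satisfying the three orthogonality conditions of Lemma~\ref{le:rey} (existence of this splitting reduces to nonsingularity of an explicit $7\times 7$ matrix). Since $\wt L g_1=0$, one gets $\la g,\wt L g\ra=\la g_2,\wt L g_2\ra\geq\la g_2,Lg_2\ra$, and Lemma~\ref{le:rey} gives strict positivity unless $g_2=0$. This is precisely the ``direct'' alternative you sketch in your final sentence. Your spectral route can be salvaged by a dimension count---if there were another negative direction $\psi\perp\cY$, then $\spn(\cY,\psi,\Lambda W,\partial_{x_i}W)$ would be $8$-dimensional with $\la\cdot,L\cdot\ra\leq 0$, forcing a nonzero intersection with the codimension-$7$ subspace of Lemma~\ref{le:rey}---but once you unwind that argument you have essentially reproduced the paper's decomposition.
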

\begin{proof}
Let $g \in \dot H^1(\bR^5)$ and decompose $g = g_1 + g_2$ so that
\begin{gather*}
g_1 \in \spn(\cY, \Lambda W, \partial_{x_1} W, \ldots, \partial_{x_5} W)), \\
\la \Delta W, g_2\ra = \la \Delta\Lambda W, g_2\ra = \la \Delta \grad W, g_2\ra = 0.
\end{gather*}
In order to guarantee that such a decomposition exists, we need to check that the $7\times 7$ matrix
\begin{equation*}
\begin{pmatrix} \la \Delta W, Y\ra & \la \Delta W, \Lambda W\ra & \big(\la \Delta W, \partial_{x_j} W\ra\big)_{j=1,\ldots, 5} \\
\la \Delta \Lambda W, Y\ra & \la \Delta \Lambda W, \Lambda W\ra & \big(\la \Delta \Lambda W, \partial_{x_j} W\ra\big)_{j=1,\ldots, 5} \\
\big(\la \Delta \partial_{x_j} W, Y\ra\big)_{j=1,\ldots, 5} & \big(\la \Delta \partial_{x_j} W, \Lambda W\ra\big)_{j=1, \ldots, 5} & \big(\la \Delta \partial_{x_j} W, \partial_{x_k} W\ra\big)_{j, k=1, \ldots, 5} \end{pmatrix}
\end{equation*}
is non-singular. The upper left term is non-zero because $\Delta W = -f(W) < 0$ and $\cY > 0$.
We also have $\la \Delta W, \Lambda W\ra = 0$ and, using symmetry considerations, we obtain that the matrix
is lower-triangular with non-zero entries on the diagonal.

Since $\wt L g_1 = 0$, using Lemma~\ref{le:rey} we obtain
\[
\la g, \wt L g\ra = \la g_2, \wt L g_2\ra \geq 0,
\]
with equality if and only if $g_2 = 0$.
\end{proof}
\begin{remark}
It follows that $-\nu^2$ is the only negative eigenvalue of $L$.
\end{remark}
\begin{lemma}\label{le:coer0}
There exists $\eta >0$ such that, for any $g\in \dot H^1(\bR^5)$, 
\begin{equation}
\int_{\bR^5} \big(|\nabla g|^2 - f'(W) g^2 \big) \ud x
\geq \eta \|\nabla g\|_{L^2}^2 - \big((\nu^2 + 1)\la \cY, g\ra^2 +\la \Delta\Lambda W, g\ra^2
+ |\la \nabla W, g\ra|^2 \big).
\end{equation}
\end{lemma}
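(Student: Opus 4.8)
The plan is to derive this coercivity estimate from the non-negativity of $\wt L$ established in Lemma~\ref{lem:neg-space}, upgrading the degenerate non-negativity to a quantitative lower bound modulo finitely many scalar quantities. First I would argue by contradiction: suppose no such $\eta>0$ works, so there is a sequence $g_n\in\dot H^1$ with $\|\nabla g_n\|_{L^2}=1$ such that
\[
\int_{\bR^5}\big(|\nabla g_n|^2-f'(W)g_n^2\big)\ud x+\big((\nu^2+1)\la\cY,g_n\ra^2+\la\Delta\Lambda W,g_n\ra^2+|\la\nabla W,g_n\ra|^2\big)\to 0 .
\]
After passing to a subsequence, $g_n\wto g$ weakly in $\dot H^1$. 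The key compactness input is that $f'(W)=\tfrac73 W^{4/3}$ decays like $|x|^{-4}$, so the quadratic form $g\mapsto\int f'(W)g^2$ is weakly continuous on $\dot H^1(\bR^5)$ (the weight lies in $L^{5/2}$, which is exactly the dual Lebesgue exponent making $\int f'(W)g^2$ a compact perturbation of $\|\nabla g\|_{L^2}^2$ via Rellich on balls plus a tail estimate). Likewise $\la\cY,g_n\ra$, $\la\Delta\Lambda W,g_n\ra$, $\la\nabla W,g_n\ra$ are continuous under weak convergence since $\cY$, $\Delta\Lambda W$, $\nabla W$ decay fast enough.

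Next I would pass to the limit. By weak lower semicontinuity of $\|\nabla\cdot\|_{L^2}^2$ and weak continuity of the remaining terms, the limit $g$ satisfies $\int(|\nabla g|^2-f'(W)g^2)\ud x\le 0$ together with $\la\cY,g\ra=\la\Delta\Lambda W,g\ra=\la\nabla W,g\ra=0$. The orthogonality to $\cY$ makes $\nu^2\la\cY,g\ra^2=0$, so $\la g,\wt Lg\ra=\int(|\nabla g|^2-f'(W)g^2)\ud x\le 0$; by Lemma~\ref{lem:neg-space} this forces $\la g,\wt Lg\ra=0$ and $g\in\spn(\cY,\Lambda W,\partial_{x_1}W,\ldots,\partial_{x_5}W)$. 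Combined with $\la\cY,g\ra=0$ (so no $\cY$ component; note $\la\cY,\Lambda W\ra=\la\cY,\partial_{x_j}W\ra=0$ by symmetry/eigenspace orthogonality), $\la\nabla W,g\ra=0$ (killing the $\partial_{x_j}W$ components, since the Gram matrix of $\{\partial_{x_j}W\}$ is nondegenerate), and $\la\Delta\Lambda W,g\ra=0$ together with $\la\Delta\Lambda W,\Lambda W\ra=\la\Lambda W,L\Lambda W\ra+\ldots\ne 0$ — more directly $\la\Delta\Lambda W,\Lambda W\ra=-\la\Lambda W, W^{7/3}\cdot\tfrac{?}{}\ra$, in any case a nonzero quantity one computes — this forces the $\Lambda W$ component to vanish too. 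Hence $g=0$.

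Finally I would rule out $g=0$ to reach the contradiction. If $g=0$ then $\int f'(W)g_n^2\to 0$ by the weak continuity above, and the bracketed scalar terms $\to 0$ as well; but then the hypothesis forces $\|\nabla g_n\|_{L^2}^2\to 0$, contradicting $\|\nabla g_n\|_{L^2}=1$. This contradiction establishes the existence of $\eta>0$. The main obstacle is the compactness step: one must justify carefully that $g\mapsto\int_{\bR^5}f'(W)g^2\ud x$ is weakly sequentially continuous on $\dot H^1(\bR^5)$, which combines the local Rellich–Kondrachov embedding $\dot H^1\hookrightarrow L^2_{\mathrm{loc}}$ with the decay $f'(W)(x)\lesssim(1+|x|)^{-4}$ to control the contribution from $|x|\ge R$ uniformly in $n$ by Hardy's or Sobolev's inequality; everything else is soft functional analysis plus the nondegeneracy facts already assembled in the proof of Lemma~\ref{lem:neg-space}.
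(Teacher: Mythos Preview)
Your argument is correct and follows essentially the same route as the paper's proof: contradiction, weak compactness via Rellich and the decay of $f'(W)$, passage to the limit using lower semicontinuity of $\|\nabla\cdot\|_{L^2}^2$, invocation of Lemma~\ref{lem:neg-space}, and a nondegeneracy check to force $g=0$. The only notable difference is your normalization $\|\nabla g_n\|_{L^2}=1$ versus the paper's $\int f'(W)g_n^2=1$; the latter makes $g\neq 0$ immediate from Rellich, whereas you need the short extra step ruling out $g=0$ at the end---both work equally well. One small clean-up: your claim $\la\Delta\Lambda W,\Lambda W\ra\neq 0$ is simply $-\|\nabla\Lambda W\|_{L^2}^2\neq 0$, no need for the tentative computation you sketch.
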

\begin{proof}
If this is false, then there exists a sequence $g_n \in \dot H^1$
such that for $n = 1, 2, \ldots$
\begin{gather*}
\int_{\bR^5}f'(W)g_n^2 \ud x = 1, \\
\label{eq:coer0-1}
\int_{\bR^5} \big( |\nabla g_n|^2 - f'(W) g_n^2 \big) \ud x
\leq \frac 1n \|\nabla g_n\|_{L^2}^2 - \big((\nu^2 + 1)\la \cY, g_n\ra^2 +\la \Delta\Lambda W, g_n\ra^2 
+ |\la \nabla W, g_n\ra|^2 \big) .
\end{gather*}
These inequalities imply in particular that the sequence $(g_n)$ is bounded in $\dot H^1(\bR^5)$.
Upon extracting a subsequence, we can assume $g_n \wto g$ in $\dot H^1(\bR^5)$.
By the Rellich theorem, we have $\int_{\bR^5}f'(W)g^2\ud x = 1$ and thus $g \neq 0$.
Moreover, it holds
\begin{equation}
\lim_{n \to \infty}\la \cY, g_n\ra = \la \cY, g\ra, \quad \lim_{n \to \infty}\la \Delta\Lambda W, g_n\ra = \la \Delta\Lambda W, g\ra, \quad \lim_{n \to \infty}\la \grad W, g_n\ra = \la \grad W, g\ra.
\end{equation}
Hence, by the Fatou property, $g$ satisfies
\begin{gather}
\la g,\wt L g\ra +\la \cY, g\ra^2 + \la \Delta\Lambda W, g\ra^2 + |\la \nabla W, g\ra|^2 \leq 0.
\end{gather}
By Lemma~\ref{lem:neg-space}, this implies
\begin{equation}
g \in \spn(\cY, \Lambda W, \grad W)\quad \mbox{and}\quad 
\la \cY, g\ra = \la \Delta\Lambda W, g\ra =| \la \nabla W, g\ra| = 0.
\end{equation}
This is impossible, since the $7\times 7$ matrix
\begin{equation}
\begin{pmatrix} \la \Delta \Lambda W, \Lambda W\ra & \big(\la \Delta \Lambda W, \partial_{x_j} W\ra\big)_{j=1, \ldots, 5} & \la \Delta \Lambda W, \cY\ra \\
\big(\la \partial_{x_j} W, \Lambda W \ra\big)_{j=1, \ldots, 5} & \big(\la \partial_{x_j} W, \partial_{x_k} W\ra\big)_{j,k=1, \ldots, 5} & \big(\la \partial_{x_j} W, \cY\ra\big)_{j=1, \ldots, 5} \\
\la \cY, \Lambda W\ra & \big(\la \cY, \partial_{x_j} W\ra\big)_{j=1, \ldots, 5} & \la \cY, \cY\ra \end{pmatrix}
\end{equation}
is non-singular (this matrix is upper-triangular with non-zero entries on its diagonal).
\end{proof}
\begin{lemma}\label{le:coer}
For any $\eta > 0$ there exists $R = R(\eta) > 0$ such that for all $g\in \dot H^1(\bR^5)$,
\begin{equation}
\int_{|x|\leq R} |\nabla g|^2 \ud x - \int_{\bR^5} f'(W) g^2 \ud x
\geq -\eta\|\nabla g\|_{L^2}^2 - \nu^2 \la \cY, g\ra^2.
\end{equation}
\end{lemma}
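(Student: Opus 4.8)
The plan is to argue by contradiction and reduce everything to the non-negativity of $\wt L$ proved in Lemma~\ref{lem:neg-space}. Suppose the conclusion fails for some $\eta>0$; then for every $n\in\bN$ there is $g_n\in\dot H^1(\bR^5)$ violating the inequality with $R=n$. Since both sides are homogeneous of degree $2$ in $g$ and $f'(W)>0$ on $\bR^5$, we may rescale so that $\int_{\bR^5}f'(W)g_n^2\ud x=1$, and the violated inequality reads
\begin{equation*}
\int_{|x|\le n}|\grad g_n|^2\ud x+\eta\|\grad g_n\|_{L^2}^2+\nu^2\la\cY,g_n\ra^2<\int_{\bR^5}f'(W)g_n^2\ud x=1 .
\end{equation*}
In particular $\eta\|\grad g_n\|_{L^2}^2<1$, so $(g_n)$ is bounded in $\dot H^1(\bR^5)$, and after extracting a subsequence $g_n\wto g$ in $\dot H^1(\bR^5)$.

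Next I would record the three limiting facts needed to pass to the $\liminf$. First, $\int f'(W)g_n^2\ud x\to\int f'(W)g^2\ud x$: since $f'(W)=\tfrac73 W^{\frac43}$ is bounded and $O(|x|^{-4})$ at infinity, Hardy's inequality gives $\int_{|x|\ge M}f'(W)g_n^2\ud x\lesssim M^{-2}\|\grad g_n\|_{L^2}^2$ uniformly in $n$, while on each fixed ball $\{|x|\le M\}$ the Rellich theorem yields strong $L^2$ convergence; letting $M\to\infty$ proves the claim and shows $\int f'(W)g^2\ud x=1$, hence $g\neq0$. Second, $\la\cY,g_n\ra\to\la\cY,g\ra$, by the same ball/tail splitting, now using the exponential decay of $\cY$ together with Hardy for the tail. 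Third, by weak lower semicontinuity of $v\mapsto\int_{|x|\le M}|\grad v|^2\ud x$ on each fixed ball, and since $\int_{|x|\le n}|\grad g_n|^2\ud x\ge\int_{|x|\le M}|\grad g_n|^2\ud x$ whenever $n\ge M$, one gets $\liminf_n\int_{|x|\le n}|\grad g_n|^2\ud x\ge\int_{|x|\le M}|\grad g|^2\ud x$ for every $M$, hence $\liminf_n\int_{|x|\le n}|\grad g_n|^2\ud x\ge\|\grad g\|_{L^2}^2$; likewise $\liminf_n\|\grad g_n\|_{L^2}^2\ge\|\grad g\|_{L^2}^2$.

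Taking the $\liminf$ in the displayed inequality then gives $(1+\eta)\|\grad g\|_{L^2}^2+\nu^2\la\cY,g\ra^2\le1=\int f'(W)g^2\ud x$, that is,
\begin{equation*}
\la g,\wt L g\ra=\int_{\bR^5}\big(|\grad g|^2-f'(W)g^2\big)\ud x+\nu^2\la\cY,g\ra^2\le-\eta\|\grad g\|_{L^2}^2<0 ,
\end{equation*}
the last inequality being strict because $g\neq0$. This contradicts Lemma~\ref{lem:neg-space}, which asserts $\la g,\wt L g\ra\ge0$ for every $g\in\dot H^1(\bR^5)$, and the proof is complete.

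I expect the only real subtlety to be the compactness input: because the $g_n$ live in $\dot H^1(\bR^5)$ and not in $L^2(\bR^5)$, the convergences $\int f'(W)g_n^2\to\int f'(W)g^2$ and $\la\cY,g_n\ra\to\la\cY,g\ra$ must be obtained by combining the local Rellich embedding with a Hardy-type tail bound exploiting the decay of $W$ and of $\cY$, and the expanding cut-off radii $R_n\to\infty$ force one to test first against a fixed ball $\{|x|\le M\}$ and only afterwards send $M\to\infty$. Everything else is soft: the whole point is that the term $\nu^2\la\cY,g\ra^2$ retained on the right-hand side is exactly what converts $\la g,Lg\ra$ into $\la g,\wt Lg\ra$, so that the positivity of $\wt L$ closes the argument; Lemma~\ref{le:coer0} is not needed here.
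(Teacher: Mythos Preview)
Your proof is correct and follows essentially the same route as the paper: contradiction, normalization by $\int f'(W)g_n^2=1$, extraction of a weak limit, passage to the limit in each term, and conclusion via Lemma~\ref{lem:neg-space}. The paper is terser---it invokes Rellich directly for the potential term, states $\textbf{1}_{\{|x|\le n\}}\grad g_n\wto\grad g$ and applies the Fatou property---whereas you spell out the Hardy-tail/local-Rellich splitting and the fixed-ball-then-$M\to\infty$ argument for the truncated Dirichlet integral; these are just more explicit versions of the same steps.
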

\begin{proof}
By contradiction, suppose there exists $\eta > 0$ and a sequence $g_n \in \dot H^1$ such that it holds $\int_{\bR^5}f'(W)g_n^2 \ud x = 1$ and
\begin{equation}
\int_{|x|\leq n} |\nabla g_n|^2 \ud x - \int_{\bR^5} f'(W) g_n^2 \ud x
\leq -\eta\|\nabla g_n\|_{L^2}^2 - \nu^2 \la \cY, g_n\ra^2.
\end{equation}
In particular, $g_n$ is bounded in $\dot H^1$, and upon extracting a subsequence we can assume that $g_n \wto g \in \dot H^1$.
By Rellich's theorem, $\int_{\bR^5} f'(W)g^2 \ud x = 1$, in particular $g \neq 0$.
We also have $\la \cY, g\ra = \lim_{n \to \infty} \la \cY, g_n\ra$.
Observe that $\textbf{1}_{\{|x| \leq n\}}\grad g_n \wto \grad g$ in $L^2(\bR^5)$, where $\textbf{1}$ denotes the indicator function.
Thus, by the Fatou property, it holds
$ \la g,\wt L g\ra +\eta \|\grad g\|_{L^2}^2 \leq 0$, which contradicts Lemma~\ref{lem:neg-space}.
\end{proof}
\subsection{Multiple potentials}
For $\lambda, \mu \in (0, \infty)$ and $x, y \in \bR^5$ we denote
\begin{equation}
\delta((\lambda, x), (\mu, y)) := \Big|\log\Big(\frac{\lambda}{\mu}\Big)\Big| + \frac{| x - y|}{\lambda}.
\end{equation}
We say that two sequences $(\lambda_n, x_n)$ and $(\mu_n, y_n)$ are \emph{orthogonal} if
\begin{equation}
\lim_{n\to\infty}\delta((\lambda_n, x_n), (\mu_n, y_n)) = \infty.
\end{equation}
Let $K\geq 1$; in what follows $\sum_k$ denotes $\sum_{k=1}^K$.
For $(\lambda^{(k)}, x^{(k)}) \in (0, \infty) \times \bR^5$, we use the notation
\[W^{(k)}(x) := \big(\lambda^{(k)}\big)^{-\frac 32}W\big((x-x^{(k)})/\lambda^{(k)}\big)\]
and similarly for other functions.

\begin{lemma}
\label{le:coer-multi}
There exist $\eta >0$ such that the following holds.
Let $(\lambda^{(k)}, x^{(k)}) \in (0, \infty) \times \bR^5$ for $k = 1, \ldots, K$ satisfy
$\delta((\lambda^{(j)}, x^{(j)}), (\lambda^{(k)}, x^{(k)})) \geq \eta^{-1}$ for all $j \neq k$. 
Let $U \in \dot H^1(\bR^5)$ satisfy
\begin{equation}
\Big\|U - \sum_k W^{(k)}\Big\|_{\dot H^1} \leq \eta.
\end{equation}
Then for any $g\in \dot H^1(\bR^5)$
\begin{multline}
 \int_{\bR^5} \big( |\nabla g|^2 - f'(U) g^2 \big) \ud x
 \geq \eta \|\nabla g\|_{L^2}^2 \\ 
 - \sum_k\Big\{(\nu^2 + 1)\la ( \lambda^{(k)} )^{-2} \cY^{(k)}, g\ra^2 +\la ( \lambda^{(k)} )^{-2}(\Delta \Lambda W)^{(k)}, g\ra^2 + |\la ( \lambda^{(k)} )^{-2} (\nabla W)^{(k)}, g\ra|^2 \Big\}.
\end{multline}
\end{lemma}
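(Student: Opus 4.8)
The plan is to reduce the multi-potential coercivity estimate of Lemma~\ref{le:coer-multi} to the single-potential results (Lemma~\ref{le:coer0} and Lemma~\ref{le:coer}) by a localization argument, exploiting the fact that the bubbles $W^{(k)}$ are pairwise orthogonal in the sense of the $\delta$-distance. First I would introduce a smooth partition of unity $(\phi_k)_{k=1}^K$ adapted to the $K$ bubbles: since the scales and centers are $\eta^{-1}$-separated, one can find cutoff functions $\phi_k$, each a rescaled-translated version of a fixed radial profile (built from the function $\chi$ of Section~\ref{sec:notation}), such that $\phi_k \equiv 1$ on a large ball around $x^{(k)}$ at scale $\lambda^{(k)}$, the supports of $\nabla\phi_k$ are disjoint and located in the "far" region for each bubble, and $\sum_k \phi_k \leq 1$. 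Writing $g_k := \phi_k g$ and $g_0 := g - \sum_k \phi_k g$ (or rather splitting $\|\nabla g\|^2$ using $|\nabla g|^2 = \sum_k \phi_k^2 |\nabla g|^2 + (\text{remainder})$, with the remainder controlled by the disjoint-support property up to terms of size $O(\eta)\|\nabla g\|_{L^2}^2$), I would reduce the quadratic form $\int (|\nabla g|^2 - f'(U)g^2)$ to a sum over $k$ of localized single-bubble quadratic forms plus an error.

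The main point is the treatment of $f'(U)$. Near the $k$-th bubble, $U$ is close in $\dot H^1$ to $W^{(k)}$, and the other bubbles $W^{(j)}$, $j\neq k$, are small there; combined with the decay of $f'$ and Sobolev/Hölder estimates, $\int f'(U) g^2 \, \mathrm dx = \sum_k \int f'(W^{(k)}) g_k^2\,\mathrm dx + O(\eta)\|\nabla g\|_{L^2}^2$, where I rescale each term so that $W^{(k)}$ becomes $W$. At this point, for each $k$ I apply Lemma~\ref{le:coer} with a small parameter (say $\eta/(2K)$) to the rescaled localized function: this gives a lower bound $\int_{|x|\le R}|\nabla(\cdot)|^2 - \int f'(W)(\cdot)^2 \ge -\frac{\eta}{2K}\|\nabla(\cdot)\|_{L^2}^2 - \nu^2\langle\cY,\cdot\rangle^2$, and the full gradient term $\int_{|x|\le R}$ versus $\int_{\bR^5}$ is reconciled using that $\phi_k$ has been chosen to equal $1$ on a ball of radius $R\lambda^{(k)}$ (here $R = R(\eta/(2K))$, which fixes how large the separation constant $\eta^{-1}$ must be). Alternatively, one applies Lemma~\ref{le:coer0} directly, at the cost of the three extra orthogonality terms $\langle\cY,\cdot\rangle$, $\langle\Delta\Lambda W,\cdot\rangle$, $\langle\nabla W,\cdot\rangle$; unscaling these back to the original variables produces exactly the terms $\langle(\lambda^{(k)})^{-2}\cY^{(k)},g\rangle$, $\langle(\lambda^{(k)})^{-2}(\Delta\Lambda W)^{(k)},g\rangle$, $\langle(\lambda^{(k)})^{-2}(\nabla W)^{(k)},g\rangle$ appearing in the statement (the $(\lambda^{(k)})^{-2}$ weights come from the $\dot H^1 \leftrightarrow L^2$ rescaling and the $\Delta$ normalization already used in Lemma~\ref{le:coer0}).

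Summing over $k$ and collecting errors, one obtains $\int(|\nabla g|^2 - f'(U)g^2) \ge (\eta - C\eta)\|\nabla g\|_{L^2}^2 - \sum_k\{\cdots\}$, and after renaming $\eta$ (replacing it by a sufficiently small fixed constant and enlarging the separation threshold accordingly) this is the claimed inequality. The hard part, and the step I would be most careful with, is the bookkeeping of the cross terms: controlling $\int f'(W^{(j)}) g^2$ for $j\neq k$ on the support of $\phi_k$, controlling $\int |f'(U) - \sum_k \phi_k^2 f'(W^{(k)})|\,g^2$ globally (using $\|U - \sum W^{(k)}\|_{\dot H^1}\le\eta$ together with the a.e. pointwise bound $|f'(a+b)-f'(a)| \lesssim |b|^{4/3} + |a|^{1/3}|b|$ and Hölder with the Sobolev exponent $\tfrac{10}{3}$), and handling the gradient cross terms from $\sum_k \phi_k^2 < 1$. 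None of these is conceptually deep, but each requires that the separation constant $\eta^{-1}$ and the smallness of $\eta$ be chosen in the right order relative to $R(\eta/(2K))$; making that chain of choices consistent is the real content of the proof.
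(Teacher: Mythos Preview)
Your localization/partition-of-unity approach is sound and would lead to a correct proof, but it is genuinely different from the paper's argument. The paper proceeds by contradiction combined with G\'erard's profile decomposition: assuming the inequality fails along a sequence $(\lambda_n^{(k)}, x_n^{(k)})$, $U_n$, $g_n$, one normalizes $\int f'(U_n)g_n^2 = 1$ so that $(g_n)$ is bounded in $\dot H^1$, extracts a profile decomposition of $g_n$, arranges the first $K$ scale--center sequences to coincide with the bubble parameters, and passes to the limit using the Pythagorean expansion $\|g_n\|_{\dot H^1}^2 = \sum_j \|\psi^{(j)}\|_{\dot H^1}^2 + \|r_n^{(J)}\|_{\dot H^1}^2 + o(1)$ together with Rellich-type convergence of the potential and pairing terms. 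One then lands on $\sum_k \big(\langle \psi^{(k)}, \wt L\psi^{(k)}\rangle + \text{orthogonality terms}\big)\le 0$ with some $\psi^{(k)}\neq 0$, contradicting Lemma~\ref{lem:neg-space}. This soft, concentration-compactness argument is short and entirely avoids cutoffs and cross-term estimates, at the price of being non-constructive in $\eta$.

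Your direct route is more elementary in that it does not invoke profile decomposition, and once carried out it gives an explicit chain of dependencies $\eta \to R(\eta/2K) \to$ separation threshold. Two points deserve care if you pursue it. First, the $\delta$-separation allows bubbles separated purely by \emph{scale} (e.g.\ $x^{(j)}=x^{(k)}$ with $\lambda^{(j)}/\lambda^{(k)}$ large), in which case the cutoffs cannot all be rescaled translates of a single radial bump; one needs a nested or annular construction, which is standard but not quite what you describe. Second, after applying the single-bubble lemma to $\phi_k g$, the resulting orthogonality terms involve $\langle (\lambda^{(k)})^{-2}\cY^{(k)}, \phi_k g\rangle$ etc., and you must replace $\phi_k g$ by $g$; this is easy for $\cY$ (exponential decay) but for $\Delta\Lambda W$ and $\nabla W$ the error is only polynomial in $R$, so it has to be absorbed into the $O(\eta)\|\nabla g\|_{L^2}^2$ budget.
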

\begin{proof}
Assuming that the conclusion fails, we would have sequences $(\lambda_n^{(k)}, x_n^{(k)})$,
$U_n \in \dot H^1(\bR^5)$ and $g_n \in \dot H^1(\bR^5)$ such that
\begin{align*}
&\lim_{n \to \infty} \delta((\lambda_n^{(j)}, x_n^{(j)}), (\lambda_n^{(k)}, x_n^{(k)})) = \infty,\quad\mbox{for } j \neq k, \\
&\lim_{n \to \infty} \Big\|U_n - \sum_k W_n^{(k)}\Big\|_{\dot H^1} = 0,
\end{align*}
and
\begin{multline}
\int_{\bR^5} \big( |\nabla g_n|^2 - f'(U_n) g_n^2 \big) \ud x
\leq \frac 1n \|\nabla g_n\|_{L^2}^2 
\\- \sum_k   \big((\nu^2 + 1)\la ( \lambda^{(k)}  )^{-2}\cY_n^{(k)}, g_n\ra^2 +\la ( \lambda^{(k)}  )^{-2}(\Delta\Lambda W)_n^{(k)}, g_n\ra^2 + |\la ( \lambda^{(k)}  )^{-2}(\nabla W)_n^{(k)}, g_n\ra|^2 \big),
\end{multline}
with the normalization $\int_{\bR^5} f'(U_n)g_n^2\ud x = 1$.
Here, $Y_n^{(k)}=\big(\lambda_n^{(k)}\big)^{-\frac 32}Y\big((x-x_n^{(k)})/\lambda_n^{(k)}\big)$
and similarly for other functions.

The sequence $g_n$ being bounded in $\dot H^1(\bR^5)$, by \cite[Th\'eor\`eme~1.1]{PG98},
upon extracting a subsequence, there exist pairwise orthogonal sequences $(\mu_n^{(j)}, y_n^{(j)})$
and a sequence of \emph{profiles} $\psi^{(j)} \in \dot H^1$ such that
\begin{equation}
\label{eq:pg-decomposition}
g_n = \sum_{j=1}^J \big(\mu_n^{(j)}\big)^{-\frac 32}\psi^{(j)}\big((\cdot - y_n^{(j)})/\mu_n^{(j)}\big) + r_n^{(J)}\quad\mbox{with}\quad \lim_{J \to \infty}\limsup_{n\to\infty}\|r_n^{(J)}\|_{L^\frac{10}{3}} = 0,
\end{equation}
and
\begin{equation}
\label{eq:pyth}
\|g_n\|_{\dot H^1}^2 = \sum_{j=1}^J\|\psi^{(j)}\|_{\dot H^1}^2 + \|r_n^{(J)}\|_{\dot H^1}^2 + o(1)\quad\mbox{as $n \to \infty$}.
\end{equation}
Without loss of generality, we assume that $( y_n^{(j)}, \mu_n^{(j)}) = ( x_n^{(j)}, \lambda_n^{(j)})$ for $j = 1, \ldots, K$.
Indeed, if for some $k \in \{1, \ldots, K\}$ the sequence $( x_n^{(k)}, \lambda_n^{(k)})$ is orthogonal to all the sequences
$( y_n^{(j)}, \mu_n^{(j)})$, we can simply include it in the profile decomposition with identically zero corresponding profile.
If, on the contrary, there exists $j$ such that $( x_n^{(k)}, \lambda_n^{(k)})$ is not orthogonal to $( y_n^{(j)}, \mu_n^{(j)})$,
then, up to extracting a subsequence, we can assume that
\begin{equation}
\lim_{n\to\infty} \lambda_n^{(k)} / \mu_n^{(j)} = \lambda_0 \in (0, \infty)\quad\mbox{and}\quad \lim_{n\to\infty} x_n^{(k)} - y_n^{(j)} = x_0 \in \bR^5.
\end{equation}
Changing $\psi^{(j)}$ if necessary, we can replace $( y_n^{(j)}, \mu_n^{(j)})$ with $( x_n^{(k)}, \lambda_n^{(k)})$.

From $\lim_{n \to \infty} \|U_n - \sum_k W_n^{(k)} \|_{\dot H^1} = 0$ and \eqref{eq:pg-decomposition} we deduce
\begin{equation}
1 = \lim_{n \to \infty} \int_{\bR^5} f'(U_n) g_n^2\ud x = \sum_k \int_{\bR^5}f'(W)\big(\psi^{(k)}\big)^2\ud x.\end{equation}
This shows that at least one of the profiles $\psi^{(1)}, \ldots, \psi^{(K)}$ is not identically zero.
We also have
\begin{align*}
&\lim_{n \to \infty} \big( \lambda^{(k)} \big)^{-2} \la \cY_n^{(k)}, g_n\ra = \la \cY, \psi^{(k)}\ra, \\
& \lim_{n \to \infty} \big( \lambda^{(k)} \big)^{-2}\la (\Delta\Lambda W)_n^{(k)}, g_n\ra = \la \Delta\Lambda W, \psi^{(k)}\ra, \\
 & \lim_{n \to \infty}\big( \lambda^{(k)} \big)^{-2} \la (\grad W)_n^{(k)}, g_n\ra = \la \grad W, \psi^{(k)}\ra.
\end{align*}
The Pythagorean formula \eqref{eq:pyth} thus yields
\begin{align*}
&\sum_k\left\{ \la \psi^{(k)},\wt L \psi^{(k)}\ra
+\big\la \cY, \psi^{(k)}\big\ra^2 + \big\la \Delta\Lambda W, \psi^{(k)}\big\ra^2 
+ \big|\big\la \nabla W, \psi^{(k)}\big\ra\big|^2 \right\}\leq 0.
\end{align*}
This contradicts Lemma~\ref{lem:neg-space}, as in the proof of Lemma~\ref{le:coer0}.
\end{proof}
\section{Construction of multi-bubble solutions}
Let $K\geq 2$ and $\zz_1, \ldots, \zz_K$ be $K$ points of $\bR^5$ distinct two by two.
Set 
\[
d := \frac 12\min_{j\neq k}|\zz_j - \zz_k|>0\quad \mbox{and}\quad \bs \zz=(\zz_1,\ldots,\zz_K).
\]
We consider $\bcc\in (0,\infty)^K$ as given by (iii) of Lemma~\ref{le:l}.
Let $T_0> 1$ to be taken large enough.

\subsection{Modulation and bootstrap}
Let
\[
\bla=(\lambda_1,\ldots,\lambda_K)\in (0,\infty)^K,\quad
\bb=(b_1,\ldots,b_K)\in \bR^K,\quad
\yy=(y_1,\ldots,y_K) \in (\bR^5)^K,
\]
and denote
$\bGa = (\lambda_1,b_1,y_1,\ldots,\lambda_K,b_K,y_K)$.

For all $1\leq k\leq K$, we set
\begin{align*}
W_k &:= W_{\lambda_k }(\cdot-y_k)
=\frac{1}{\lambda_k^{\frac 32}} W\left(\frac{\cdot-y_k}{\lambda_k}\right), \\
\grad_k W_k &:= (\grad W)_{\lambda_k }(\cdot-y_k)
=\frac{1}{\lambda_k^{\frac 32} } \nabla W\left(\frac{\cdot-y_k}{\lambda_k}\right),
\end{align*}
and similarly,
\begin{equation*}
\Lambda_k W_k :=(\Lambda W)_{\lambda_k }(\cdot-y_k ),\quad
\Delta_k \Lambda_k W_k := (\Delta \Lambda W)_{\lambda_k }(\cdot-y_k ), \quad
\YY_k := \YY_{\lambda_k}(\cdot-y_k).
\end{equation*}
Note that the above functions all have the same scaling; in particular,
$\nabla_k W_k=\lambda_k \nabla W_k$.
We also define 
\begin{equation}\label{def:YZk}
\vec Y_k^\pm = (\nu^{-1} Y_k, \pm \lambda_k^{-1}Y_k ),\quad
\vec Z_k^\pm =\frac 12 \lambda_k^{-1} (\nu \lambda_k^{-1} Y_k, \pm Y_k ),\quad
\la \vec Y_k^\pm,\vec Z_k^\pm\ra =1,\quad
\la \vec Y_k^\pm,\vec Z_k^\mp\ra =0.
\end{equation}
Last, we set (recall that $\sum_k$ means $\sum_{k=1}^K$)
\[
\bW=\sum_{k} W_k ,\quad
\bWW =\sum_{k} \vec W_k,\quad \vec W_k=\left(W_k , {b_k }\lambda_k^{-1} \Lambda_k W_k \right).
\]

 The strategy of the proof is to construct solutions $\uv$ of \eqref{eq:nlw} of the form
\begin{equation}\label{eq:dec}
\uv(t )=\bWWt+\gv(t)
\end{equation}
with $\|\gv(t)\|_\cE\ll 1$
on intervals of time $[T_0,T]$, and where the choice of the time-dependent~$\mathcal C^1$ parameter vector $\bGa(t)$ 
 will ensure the orthogonality conditions
\begin{equation}\label{eq:ortho}
\la \Delta_k\Lambda_k W_k , g \ra = 0,\quad
\la \nabla_k W_k , g \ra = 0,\quad
\la \Lambda_k W_k , \dot g \ra = 0
\end{equation}
and will approximately follow the regime~\eqref{regime}.
We denote
\begin{equation}\label{def:a}
a_k^{\pm}:=\la \vec Z_k^\pm,\gv\ra.
\end{equation}

In the next lemma, we construct well-prepared initial conditions at $t=T\geq T_0$ with sufficiently many free parameters $(\alpha_0,\alpha_1,\ldots,\alpha_K)$ related to instabilities (see Remark~\ref{rk:well}).
\begin{lemma}
\label{lem:init-data}
For any $T> T_0$ and any $(\alpha_0,\alpha_1,\ldots,\alpha_K)\in \bar \cB_{\bR^{K+1}}$,
there exists a data $\uv(T)= \uv[T, (\alpha_0,\alpha_1,\ldots,\alpha_K)] \in X$ such that
\begin{equation}
\uv(T) = \WW_{\bGa(T)} + \gv(T),
\end{equation}
with $\bGa(T)$ defined by
\begin{equation}
\label{eq:param-init}
r(T)=|\bcc| T^{-2} + T^{-\frac{12}5} \alpha_0,\quad
\bla(T) = r(T)\frac\bcc{|\bcc|},\quad \bb(T) = 2(r(T))^\frac 32\frac\bcc{|\bcc|^\frac 32}, \quad \bs y(T) = \bzz,
\end{equation}
and $\gv(T)$ satisfies \eqref{eq:ortho} and for all $k=1,\ldots,K$,
\begin{equation}
\label{eq:param-init-bis}
\|\gv(T)\|_\cE \lesssim T^{-4}, \quad \la \vec Z_{k}^+(T),\gv(T)\ra = 0,\quad
\la \vec Z_{k}^-(T),\gv(T)\ra=T^{-4} \alpha_k ,
\end{equation}
where $\vec Z_{k}^\pm(T)$ are defined as in \eqref{def:YZk} for $\Gamma=\Gamma(T)$.

Moreover, $\uv(T)$ is continuous in $X$ with respect to $r_\ini$ and $a_{k,\ini}^-$.
\end{lemma}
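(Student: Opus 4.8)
The strategy is to build $\uv(T)$ explicitly as the ansatz $\WW_{\bGa(T)}$ plus a small, carefully designed error $\gv(T)$, and then invoke the implicit function theorem to adjust finitely many free parameters so that the orthogonality conditions \eqref{eq:ortho} and the prescribed values of the unstable coordinates in \eqref{eq:param-init-bis} hold. First I would fix $\bGa(T)$ by formula \eqref{eq:param-init}; note this is an admissible choice since for $T\geq T_0$ large the parameters $(\bla(T),\bb(T))$ are in the regime $|\bla|+|\bb|<\epsilon$ and the solitons are pairwise separated at scale $\delta\gtrsim \log T$, so the coercivity framework of Section~2 applies. The natural seed for $\gv(T)$ is
\[
\gv(T) := \sum_{k=1}^K T^{-4}\alpha_k\,\vec Y_k^-(T),
\]
which, using $\la \vec Y_k^\pm,\vec Z_k^\pm\ra=1$, $\la \vec Y_k^\pm,\vec Z_k^\mp\ra=0$ from \eqref{def:YZk} and the pairwise orthogonality of the (exponentially localized) bubble profiles, already satisfies $\la \vec Z_k^+(T),\gv(T)\ra = O(T^{-4}\cdot(\text{small}))$, $\la \vec Z_k^-(T),\gv(T)\ra = T^{-4}\alpha_k + O(\text{small})$, and $\|\gv(T)\|_\cE\lesssim T^{-4}$. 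It also needs to satisfy \eqref{eq:ortho}; since $\vec Y_k^-$ involves $\cY_k$ which is not orthogonal to $\Delta_k\Lambda_k W_k$, $\nabla_k W_k$ in general, I would from the start add a correction lying in $\spn$ of the dual directions to \eqref{eq:ortho}, i.e.\ replace the seed by $\gv(T)=\sum_k T^{-4}\alpha_k \vec Y_k^- + \vec h$ with $\vec h$ a small linear combination of $(0,\Lambda_k W_k)$, $((\Delta\Lambda W)_k,0)$-type corrections chosen so that \eqref{eq:ortho} holds exactly; the relevant Gram matrix is non-singular by the triangularity arguments already used in Lemmas~\ref{lem:neg-space} and~\ref{le:coer0}, and block-diagonal up to exponentially small cross terms by the separation of the $z_k$.

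The remaining issue is that $\bGa(T)$ as prescribed in \eqref{eq:param-init} must actually be the \emph{modulation parameters} associated to $\uv(T)$, i.e.\ the decomposition $\uv(T)=\WW_{\bGa(T)}+\gv(T)$ must be the one picked out by the orthogonality conditions \eqref{eq:ortho}. Since I am \emph{defining} $\uv(T)$ as $\WW_{\bGa(T)}+\gv(T)$ with $\gv(T)$ already made to satisfy \eqref{eq:ortho}, this is automatic: by the uniqueness part of the standard modulation lemma (a consequence of the non-degeneracy of the same Gram matrix, via the implicit function theorem) there is a unique choice of parameters near $\bGa(T)$ realizing \eqref{eq:ortho}, and our construction produces it. Likewise, because $\vec Z_k^\pm(T)$ are defined in terms of the \emph{same} $\bGa(T)$, the computations of $\la \vec Z_k^\pm(T),\gv(T)\ra$ above are self-consistent, and the only task is to absorb the exponentially small cross-terms: one sees $\la \vec Z_k^+(T),\gv(T)\ra$ is identically $0$ by the explicit pairing relations \eqref{def:YZk} applied bubble-by-bubble (the cross terms between bubble $k$ and bubble $j\neq k$ vanish because $\vec Z_k^+$ pairs $\cY$ against $\cY$ with the wrong relative sign, or more simply are $O(e^{-cd/\lambda})$ and can be killed by a further finite-dimensional adjustment of the $\vec h$ correction), while $\la \vec Z_k^-(T),\gv(T)\ra = T^{-4}\alpha_k$ exactly after that adjustment.

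Finally, the membership $\uv(T)\in X=(\dot H^1\cap\dot H^2)\times(L^2\cap\dot H^1)$ is clear since $W$ and $\cY$ are smooth with all derivatives decaying, hence each $W_k$, $\Lambda_k W_k$, $\cY_k$ lies in $\dot H^1\cap\dot H^2$ (resp.\ $L^2\cap\dot H^1$) with norms controlled by negative powers of $\lambda_k\sim c_k T^{-2}$ — harmless for fixed $T$. Continuity of $\uv(T)$ in $X$ with respect to $r_\ini$ and the $a_{k,\ini}^-$ (equivalently, with respect to $(\alpha_0,\alpha_1,\ldots,\alpha_K)$) follows because the dependence is through the finitely many smooth functions $\WW_{\bGa(T)}$, $\vec Y_k^\pm(T)$, $\vec Z_k^\pm(T)$ and the correction $\vec h$, all of which depend continuously (indeed smoothly) on the parameters in $X$. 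The main obstacle, such as it is, is bookkeeping: keeping the exponentially-small-in-$d/\lambda$ cross terms under control while simultaneously enforcing \eqref{eq:ortho} exactly and the two families of scalar conditions in \eqref{eq:param-init-bis} exactly — this is a finite-dimensional linear-algebra fixed point, non-degenerate thanks to the Gram-matrix triangularity already established, so it goes through for $T_0$ large enough.
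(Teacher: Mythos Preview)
Your approach is essentially the same as the paper's: both reduce the construction of $\gv(T)$ to a finite-dimensional linear problem whose matrix is a small perturbation of a block-diagonal matrix with invertible (upper-triangular) diagonal blocks. The only cosmetic difference is that the paper does it in one shot rather than seed-plus-correction: it fixes $\bGa(T)$ by \eqref{eq:param-init} and writes
\[
\gv = \sum_{k} \Big\{ b_k^+\vec Y_k^+ + b_k^-\vec Y_k^- + \|\partial_{x_1}W\|_{L^2}^{-2}((c_k\cdot\nabla_k)W_k,0) + \|\nabla\Lambda W\|_{L^2}^{-2} d_k(\Lambda_k W_k,0) + (\lambda_k\|\Lambda W\|_{L^2}^2)^{-1} e_k(0,\Lambda_k W_k)\Big\},
\]
with $9K$ scalar unknowns $(b_k^\pm,c_k,d_k,e_k)$, and observes that the linear map $\Psi$ sending these to the $9K$ constraints $(\la\vec Z_k^\pm,\gv\ra,\lambda_k^{-2}\la\nabla_kW_k,g\ra,\lambda_k^{-2}\la\Delta_k\Lambda_kW_k,g\ra,\lambda_k^{-1}\la\Lambda_kW_k,\dot g\ra)$ has matrix close to $\diag_K(A)$ with $A$ upper-triangular with $1$'s on the diagonal; inverting gives $|(b_k^\pm,c_k,d_k,e_k)|\lesssim T^{-4}$ and hence $\|\gv\|_\cE\lesssim T^{-4}$. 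Note in particular that the paper \emph{does} include the $\vec Y_k^+$ direction from the start (to hit $\la\vec Z_k^+,\gv\ra=0$ exactly) and uses $(\Lambda_k W_k,0)$ rather than $((\Delta\Lambda W)_k,0)$ for the $d_k$-direction---your iterative scheme works too, but the single linear system is tidier and avoids the bookkeeping you flagged.
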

 
\begin{proof} 
For $\bGa=\bGa(T)$ fixed as in \eqref{eq:param-init}, we consider $\gv=\gv(T)=(g,\dot g)$ of the form
\begin{equation}
\vec g = \sum_{k} \left\{ b_k^+\vec Y_k^+ +b_k^-\vec Y_k^- + \frac{((c_k\cdot \nabla_k) W _k,0)}{\| \partial_{x_1}W\|_{L^2}^2} + \frac{d_k ( \Lambda_k W_k,0)}{\|\grad \Lambda W\|_{L^2}^2}
+\frac{e_k (0, \Lambda_k W_k)}{\lambda_k \| \Lambda W\|_{L^2}^2}\right\}.
\end{equation}
Consider the linear map $\Psi:(\bR^9)^K\to (\bR^9)^K$ defined as follows:
\begin{align*}
&\Psi\left( (b_k^+,b_k^-,c_k,d_k,e_k)_{k=1,\ldots,K}\right)\\&\qquad=
\left(\la \vec Z_k^+,\vec g\,\ra,\la \vec Z_k^-,\vec g\,\ra,
\lambda_k^{-2} \la \nabla_k W_k, g\ra ,
\lambda_k^{-2} \la \Delta_k \Lambda_k W_k, g\ra ,
\lambda_k^{-1} \la \Lambda_k W_k, \dot g\ra \right)_{k=1,\ldots K}
\end{align*}
It is easy to check that for $T$ large enough the matrix of $\Psi$ is a perturbation of the block matrix
$\diag_K(A)$
where the $9\times 9$ matrix $A$ is upper-triangular with entries $1$ on the diagonal
(the only nonzero entries off the diagonal are due to $\la Y,\Delta \Lambda W\ra \neq 0$).
Moreover,
\[
\Big|\Psi^{-1} \big(\left(0,T^{-4}\alpha_k ,0,\ldots,0 \right)_{k=1,\ldots,K}\big)\Big|\lesssim T^{-4},
\]
and so $\|\vec g\|_\cE\lesssim |(b_k^+,b_k^-,c_k,d_k,e_k)_{k=1,\ldots,K}|\lesssim T^{-4}$.
The continuity property is clear.
\end{proof}

We introduce the following bootstrap estimates
\begin{align}
\|\gv \|_\cE& \leq t^{-\frac{11}3}, \label{eq:g-boot} \\
|\bla - \bcc t^{-2}| &\leq t^{-\frac 73}, \label{eq:lambda-boot} \\
|\bb - 2\bcc t^{-3}| &\leq t^{-\frac{10}{3}}, \label{eq:b-boot} \\
|\bs y - \bzz|& \leq t^{-\frac 73}, \label{eq:y-boot} \\
\sum_{k} (a_k^+)^2 &\leq t^{-8}, \label{eq:ap-boot}
\end{align}
and
\begin{equation}\label{eq:brouwer-boot}
t^{\frac{24}{5}}\big(|\bla | - |\bcc |t^{-2}\big)^2 + t^8 \sum_{k} (a_k^-)^2\leq 1. 
\end{equation}

\begin{remark}\label{rk:well}
The parameters $(\alpha_0,\alpha_1,\ldots,\alpha_K)$ 
and the bootstrap estimate~\eqref{eq:brouwer-boot} are
related to backwards instabilities to be controlled: the backward exponential instability of each soliton (controlled by~$(\alpha_k)_{k=1,\ldots,K}$), and a one-dimensional instability related to the reduced system of ODE, controlled
by $\alpha_0$.
\end{remark}

Let $\uv\in \mathcal C(I_{\max};\dot H^1\times L^2)$ where $I_{\max}\ni T$, be the maximal solution of \eqref{eq:nlw} corresponding to any data $\vec u(T)$ as given by Lemma~\ref{lem:init-data}.
Since $\vec u(T)\in X$, by persistence of regularity (see for instance Appendix~B of \cite{JJjfa}), we have $\uv\in \mathcal C(I_{\max};X)$. Such regularity will allow energy computations without density argument.

Define
\begin{align*} 
T_\star := \inf\{t\in [T_0, T] : 
 &\mbox{ on the interval $[t,T]$, $\vec u$ is well-defined}\\
 &\mbox{ and decomposes as \eqref{eq:dec} where $\Gamma$ and $\vec g$ satisfy
\eqref{eq:g-boot}-\eqref{eq:brouwer-boot}}\}.
 \end{align*}

\begin{lemma}\label{lem:mod}
It holds $T_0\leq T_\star\leq T$ and if $T_\star>T_0$ then
\begin{enumerate}
\item
Equality is reached at $t=T_\star$ in at least one of the inequalities 
\eqref{eq:g-boot}-\eqref{eq:brouwer-boot}.
\item On $[T_\star,T]$, it holds
\begin{align}
|\bla' + \bb | &\lesssim t^{-\frac{11}3}, \label{eq:lambda-mod} \\
|\bs y' | &\lesssim t^{-\frac{11}{3}}, \label{eq:y-mod} \\
|\bb' - \bs B(\bla)| &\lesssim t^{-\frac{14}3}, \label{eq:b-mod}\\
\Big|\left(a_k^\pm\right)' \mp {\nu}\lambda_k^{-1}a_k^\pm\Big| &\lesssim t^{-4},\label{eq:a-mod}
\end{align}
where $\bs B=(B_1,\ldots,B_K)$ is defined by \eqref{eq:Bk}.
\end{enumerate}
\end{lemma}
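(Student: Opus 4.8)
The plan is to prove Lemma~\ref{lem:mod} by a standard modulation argument. First, part (i) is immediate: by definition of $T_\star$ as an infimum, if $T_\star > T_0$ then on $[T_\star, T]$ all the bootstrap bounds \eqref{eq:g-boot}--\eqref{eq:brouwer-boot} hold, while they cannot \emph{all} be strict at $t = T_\star$ (otherwise, by continuity of $t \mapsto (\bGa(t), \gv(t))$ in $X$ together with local well-posedness, the decomposition \eqref{eq:dec} with the orthogonality \eqref{eq:ortho} would persist slightly below $T_\star$ with all bounds still valid, contradicting minimality); hence equality is attained in at least one of them at $t = T_\star$. The one point requiring care here is that the decomposition \eqref{eq:dec}--\eqref{eq:ortho} itself extends below $T_\star$: this follows from the implicit function theorem, using that the relevant Jacobian matrix (pairings of $\Lambda_k W_k, \nabla_k W_k, \Delta_k \Lambda_k W_k$ against the basis of tangent directions, together with the interaction estimates of Lemma~\ref{cl:inter}) is invertible for $T_0$ large, exactly as in Lemma~\ref{lem:init-data}.

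For part (ii), I would substitute $\uv = \bWWt + \gv$ into \eqref{eq:nlw} and differentiate the orthogonality conditions \eqref{eq:ortho} in time. Writing the equation for $\gv$ in the form $\partial_t \gv = J \vD^2 E(\bWW)\gv + J\big(\vD E(\bWW) - \partial_t \bWW\big) + (\text{nonlinear and interaction terms})$, one projects onto each $\Delta_k \Lambda_k W_k$, $\nabla_k W_k$ (paired with $g$) and $\Lambda_k W_k$ (paired with $\dot g$). Because $\partial_t \bWW$ is linear in $(\bla', \bb', \yy')$ through the tangent basis \eqref{eq:basis-TM}, each projected identity has the schematic shape (modulation parameter derivative) $=$ (the expected main term from the formal system) $+$ (error), where the error splits into: (a) quadratic-in-$\gv$ terms, bounded using \eqref{eq:g-boot} by $O(t^{-22/3})$, hence harmlessly small; (b) linear-in-$\gv$ terms, which vanish or are small precisely because of the orthogonality conditions and the size of $\gv$; and (c) soliton--soliton interaction terms, which are the genuinely nontrivial contributions. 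For the $\bla$- and $\yy$-equations the interaction terms contribute at a lower order than $t^{-11/3}$, giving \eqref{eq:lambda-mod}--\eqref{eq:y-mod}. For the $\bb$-equation one isolates the leading interaction, which is exactly $\bs B(\bla)$ as computed in the formal discussion around \eqref{eq:dbE}--\eqref{eq:Bk}, with remainder $O(t^{-14/3})$; this yields \eqref{eq:b-mod}. Finally \eqref{eq:a-mod} is obtained by differentiating $a_k^\pm = \la \vec Z_k^\pm, \gv\ra$, using that $\vec Z_k^\pm$ are (approximate) eigenvectors of $J\vD^2 E(\vec W_k)$ with eigenvalues $\pm \nu \lambda_k^{-1}$ and the duality relations \eqref{def:YZk}, together with the modulation estimates already obtained to control $\partial_t \vec Z_k^\pm$ and the source terms.

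The main obstacle is the precise bookkeeping of the interaction terms, in particular confirming that the leading-order correction to $\bb'$ is exactly $\bs B(\bla)$ with an error of size $t^{-14/3}$ rather than merely $t^{-11/3}$. This requires: replacing each far-away bubble $W_j(\cdot)$ near $y_k$ by its asymptotic constant value $W_{\lambda_j}(z_k - z_j) \simeq 15^{3/2}\lambda_j^{3/2}|z_j - z_k|^{-3}$, controlling the resulting Taylor error using the decay $W(x) \sim |x|^{-3}$ and $\nabla W(x) \sim |x|^{-4}$, and checking that the cross terms involving $\dot g$-components and the $b_k$-dependent parts of $\bWW$ (which carry an extra factor $b/\lambda \sim t^{-1}$) do not spoil the estimate. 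One also needs the bootstrap bounds \eqref{eq:lambda-boot}, \eqref{eq:b-boot}, \eqref{eq:y-boot} to convert $\lambda_j \sim c_j t^{-2}$, $|y_j - y_k| \sim 2d$ into the claimed powers of $t$; this is where the gap between the exponent $11/3$ (for $\bla', \yy'$) and $14/3$ (for $\bb'$) comes from, since $B_k(\bla)$ is itself of size $t^{-4}$ while the dominant non-$\bs B$ error in the $\bb$-equation must be shown to be one power of $t^{-1/3}$ smaller than the naive $t^{-11/3}$ threshold — this is the quantitatively delicate point and hinges on the structure of the orthogonality conditions eliminating the worst linear-in-$\gv$ contributions.
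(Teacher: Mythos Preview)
Your outline is correct and follows essentially the same route as the paper: differentiate the three orthogonality conditions \eqref{eq:ortho} and the definition \eqref{def:a}, use Lemma~\ref{cl:inter} for the cross-bubble pairings, invoke $L\Lambda W=0$ (and $L\nabla W=0$) to kill the leading linear-in-$g$ term in the $\bb$-equation, and extract $\bs B(\bla)$ from the interaction term $\la \Lambda_k W_k, f'(W_k)W_j\ra$ via the asymptotic $W(x)\simeq 15^{3/2}|x|^{-3}$. One small quantitative slip: in your last paragraph the gain over the ``naive $t^{-11/3}$'' is a full $t^{-1}$, not $t^{-1/3}$; concretely, the coefficient of $b_k'$ in $\dd t\la \Lambda_k W_k,\dot g\ra$ is $\lambda_k\|\Lambda W\|_{L^2}^2\sim t^{-2}$, so after dividing through, the worst surviving error $|\lambda_k'|\,\|\vec g\|_\cE\sim t^{-20/3}$ becomes $t^{-14/3}$, whereas without the cancellation $L\Lambda W=0$ the term $\la\Lambda_k W_k,\Delta g+f'(W_k)g\ra$ would leave an error of size $t^{-5/3}$.
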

We begin with a technical lemma.
\begin{lemma}\label{cl:inter}
Under the bootstrap estimates \eqref{eq:g-boot}-\eqref{eq:brouwer-boot}, the following bounds hold, for $j\neq k$,
\begin{equation}\label{eq:inter}\begin{aligned}
& \la\lambda_j^{-1} W_j,\lambda_k^{-1} W_k \ra \lesssim t^{-2},\quad 
 \la W_j^{\frac 53} , W_k^{\frac 53} \ra \lesssim t^{-10}\log t,\\
& \la \lambda_j^{-1} |\nabla_j W_j |,\lambda_k^{-1} |\nabla_k W_k | \ra \lesssim t^{-6},
 \quad \|W_k W_j^{\frac 43}\|_{L^{\frac{10}7}} \lesssim t^{-6}.
\end{aligned}\end{equation}
\end{lemma}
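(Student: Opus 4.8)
The plan is to estimate each of the four interaction quantities by exploiting the spatial separation of the bubbles together with the decay of $W$ and $\nabla W$. Under the bootstrap estimates \eqref{eq:lambda-boot} and \eqref{eq:y-boot}, for $T_0$ large enough we have, for each $k$, that $\lambda_k \sim t^{-2}$ (comparable up to fixed constants) and $|y_k-z_k|\le t^{-7/3}\ll d$, so that $|y_j-y_k|\ge d$ for $j\neq k$. Recall also the pointwise bounds $0<W(x)\lesssim \langle x\rangle^{-3}$ and $|\nabla W(x)|\lesssim \langle x\rangle^{-4}$ on $\bR^5$. The strategy for each term is the standard one: split the integral into the region near $y_j$, the region near $y_k$, and the far region, and use in the near-$y_j$ region the bound $W_k\lesssim \lambda_k^{-3/2}(|y_j-y_k|/\lambda_k)^{-3}\sim \lambda_k^{3/2}d^{-3}$ (a constant in $x$ on that scale), and symmetrically near $y_k$.

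First I would treat $\langle \lambda_j^{-1}W_j,\lambda_k^{-1}W_k\rangle$. Changing variables to center at $y_j$, one factor becomes $\lambda_j^{-1}W_j = \lambda_j^{-5/2}W(\cdot/\lambda_j)$ which is an $L^1$-normalized concentrating bump of mass $\sim 1$ (since $W\in L^1(\bR^5)$ as $|x|^{-3}$ is integrable at infinity in dimension $5$), while $\lambda_k^{-1}W_k$ evaluated on the support scale of $W_j$ is $\lesssim \lambda_k^{-1}\cdot\lambda_k^{3/2}d^{-3}=\lambda_k^{1/2}d^{-3}\sim t^{-1}$. This gives a bound $\lesssim t^{-1}\cdot\lambda_j$-mass, but $\lambda_j^{-1}W_j$ has $L^1$-mass $\sim\lambda_j^{3/2}\cdot\lambda_j^{-5/2}\cdot\lambda_j^{5/2}$... let me instead argue by Hölder or by the symmetric splitting: on $\{|x-y_j|\le d\}$ bound $\lambda_k^{-1}W_k\lesssim\lambda_k^{1/2}d^{-3}$ and integrate $\lambda_j^{-1}W_j$ which over all of $\bR^5$ gives $\lesssim\lambda_j^{1/2}$, on $\{|x-y_k|\le d\}$ argue symmetrically, and on the far region $\{|x-y_j|>d,\ |x-y_k|>d\}$ both factors decay and the integral converges; in all cases one collects a factor $\lambda_j^{1/2}\lambda_k^{1/2}\sim t^{-2}$ (times constants depending on $d$), which is the claimed bound. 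For $\langle\lambda_j^{-1}|\nabla_j W_j|,\lambda_k^{-1}|\nabla_k W_k|\rangle$ the same splitting works but now each factor near the \emph{other} center is $\lesssim \lambda^{-1}\cdot\lambda^{3/2}\cdot\lambda\cdot d^{-4}$ coming from $\nabla_k W_k=\lambda_k^{-5/2}(\nabla W)(\cdot/\lambda_k)$ and $|\nabla W|\lesssim\langle\cdot\rangle^{-4}$, producing the extra decay $\lambda_j^{3/2}\lambda_k^{3/2}\sim t^{-6}$.

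For the two $L^p$-type norms, $\langle W_j^{5/3},W_k^{5/3}\rangle$ and $\|W_kW_j^{4/3}\|_{L^{10/7}}$, I would again split into near-$y_j$, near-$y_k$, and far regions. In $\|W_kW_j^{4/3}\|_{L^{10/7}}^{10/7}=\int W_k^{10/7}W_j^{40/21}$: on $\{|x-y_j|\le d\}$ replace $W_k$ by its value $\lesssim\lambda_k^{3/2}d^{-3}$ and integrate $W_j^{40/21}$, which (scaling) gives $\lambda_j^{3/2-\,40/21\cdot 3/2+5/2}$ — a finite positive power of $\lambda_j$ provided $W^{40/21}$ is integrable, which holds since $40/21\cdot 3>5$; collecting the powers and taking the $7/10$ root yields $\lesssim\lambda_k^{\alpha}\lambda_j^{\beta}$ with $\alpha+\beta\ge 3$, i.e. $\lesssim t^{-6}$. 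The near-$y_k$ and far regions are handled the same way or are even smaller. The term $\langle W_j^{5/3},W_k^{5/3}\rangle$ is the borderline case: $W^{5/3}\sim|x|^{-5}$ at infinity, which is \emph{not} integrable in $\bR^5$ — this is the source of the logarithm. Concretely, on $\{|x-y_j|\le d\}$ bound $W_k^{5/3}\lesssim(\lambda_k^{3/2}d^{-3})^{5/3}=\lambda_k^{5/2}d^{-5}$ and integrate $W_j^{5/3}$ over $|x-y_j|\le d$: after scaling $x=y_j+\lambda_j z$ this is $\lambda_j^{5/2-5/2\cdot 5/3+5}$... more carefully $\int_{|x-y_j|\le d}W_j^{5/3}=\lambda_j^{5/2}\int_{|z|\le d/\lambda_j}W(z)^{5/3}\,dz\sim\lambda_j^{5/2}\log(d/\lambda_j)\sim t^{-5}\log t$, since $\int_{|z|\le\rho}|z|^{-5}\,dz\sim\log\rho$. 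Multiplying by $\lambda_k^{5/2}d^{-5}\sim t^{-5}$ gives the claimed $\lesssim t^{-10}\log t$; the symmetric region and the far region contribute the same or less.

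\medskip

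\noindent\textbf{Main obstacle.} The one subtlety to watch is precisely this logarithmic divergence in the $W^{5/3}$ integral: unlike the other three quantities, where all relevant powers of $W$ or $\nabla W$ are integrable at spatial infinity in dimension $5$ and a clean power of $\lambda\sim t^{-2}$ comes out, here the integral $\int W^{5/3}$ just fails to converge and one must carry the truncation at scale $d/\lambda_j$, which injects the $\log t$. Everything else is a routine three-region split using the decay of $W$ and $\nabla W$ and the fact (from the bootstrap) that the bubbles stay at mutual distance $\gtrsim d$ while concentrating at rate $\lambda_k\sim t^{-2}$.
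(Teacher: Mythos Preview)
Your approach is correct and essentially the same as the paper's: a three-region split (near $y_j$, near $y_k$, and far) using the decay $W\lesssim\langle x\rangle^{-3}$, $|\nabla W|\lesssim\langle x\rangle^{-4}$, with the logarithmic loss in the $W^{5/3}$ pairing correctly identified as coming from the borderline divergence $\int_{|z|\le R}|z|^{-5}\,dz\sim\log R$. One small slip to clean up: $W\notin L^1(\bR^5)$ since $|x|^{-3}$ is \emph{not} integrable at infinity in dimension~$5$ (you correctly abandon this), and in your corrected argument the integral of $\lambda_j^{-1}W_j$ must be taken over the ball $\{|x-y_j|\le d\}$, not ``over all of $\bR^5$''; with that restriction the bound $\int_{|x-y_j|\le d}\lambda_j^{-1}W_j\lesssim\lambda_j^{1/2}$ is correct and the rest goes through.
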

\begin{proof}[Proof of Lemma~\ref{cl:inter}]
First, by change of variable
\begin{equation}
\la\lambda_j^{-1} W_j ,\lambda_k^{-1} W_k \ra= \la\wt\lambda_j^{-1} W_{\wt\lambda_j}(\cdot - t^2 \zz_j), \wt\lambda_k^{-1} W_{\wt\lambda_k}(\cdot - t^2\zz_k)\ra,
\end{equation}
where $\wt\lambda_j := t^2 \lambda_j \sim 1$ and $\wt\lambda_k := t^2 \lambda_k \sim 1$.
The right-hand side term is estimated
by dividing $\bR^5$ into three regions: $D_j := \cB(t^2\zz_j, t^2d)$,
$D_k := \cB(t^2 \zz_k, t^2d)$ and $D_{j,k}=\bR^5\setminus(D_j\cup D_k)$.
In order to estimate the integral outside both balls, we use the bound $|W(x)|\lesssim |x|^{-3}$ and the Cauchy-Schwarz inequality and obtain
\begin{equation}
\int_{D_{j,k}}|y - t^2\zz_j|^{-3}\cdot|y - t^2\zz_k|^{-3}\ud y \lesssim
\int_{t^2d}^\infty r^{-2} \ud r \lesssim t^{-2}.
\end{equation}
For $D_j$, we observe
\begin{equation}
\int_{D_j}|y - t^2\zz_j|^{-3}\ud y \lesssim \int_0^{t^2d}r\ud r \lesssim t^4,
\end{equation}
so using also the trivial $L^\infty$ bound of order $t^{-6}$ for the second factor on $D_j$, we obtain a bound of order $t^{-2}$
for the contribution of $D_j$. This justifies the first bound in \eqref{eq:inter}.

The other estimates in \eqref{eq:inter} are proved similarly, using $|\nabla W(x)|\lesssim |x|^{-4}$.
\end{proof}

In the sequel we will make use of various pointwise estimates obtained from the Taylor expansion of the nonlinearity $f$.
We claim that for all $u, v \in \bR$
\begin{equation}\label{taylor1}
|f(u+v)-f(u)-f(v)-f'(u)v|\lesssim |u|^\frac23|v|^\frac53.
\end{equation}
 To prove \eqref{taylor1}, we consider several cases. If $|v|\leq \frac 12 |u|$, then by Taylor expansion, we have \[
|f(u+v)-f(u)-f(v)-f'(u)v|\lesssim |u|^{\frac 13}|v|^2\lesssim |u|^\frac23|v|^\frac53.
\]
If $\frac 12|u|\leq |v|\leq 2|u|$, then
\[
|f(u+v)|+|f(u)|+|f(v)|+|f'(u)v|\lesssim |u|^{\frac 73}+|v|^{\frac 73}\lesssim |u|^\frac23|v|^\frac53.
\]
Last, if $2|u|\leq |v|$, then 
\[
|f(u+v)-f(v)|+|f(u)|+|f'(u)v|\lesssim |v|^{\frac 43}|u|+|u|^\frac 73+|u|^{\frac 43}|v|\lesssim |u|^\frac23|v|^\frac53.
\]
Next, it is easily checked by induction on $J\geq 1$ that the following holds
\[
\Big| f\Big(\sum_{j=1}^J v_j\Big)-\sum_{j=1}^J f(v_j)\Big|\leq \sum_{j\neq l} |v_j||v_l|^{\frac 43}.
\]
By the triangle inequality and \eqref{taylor1}, we deduce, for any $u, v_j \in \bR$,
\begin{equation}\label{taylor2} 
 \Big|f\Big(u+\sum_{j=1}^Jv_j\Big)-f(u)-\sum_{j=1}^Jf(v_j)-f'(u)\sum_{j=1}^Jv_j\Big|
\lesssim |u|^\frac 23\sum_{j=1}^J|v_j|^\frac 53+\sum_{j\neq l} |v_j||v_l|^{\frac 43}.
\end{equation}
%

\begin{proof}[Proof of Lemma~\ref{lem:mod}] 
At $t=T$, Lemma~\ref{lem:init-data} provides an initial data as in~\eqref{eq:dec}
with the estimates~\eqref{eq:g-boot}-\eqref{eq:brouwer-boot}. Indeed, the assumption~$(\alpha_0,\alpha_1,\ldots,\alpha_K)\in \bar \cB_{\bR^{K+1}}$ implies that~\eqref{eq:brouwer-boot} holds at $t=T$.
This gives \eqref{eq:lambda-boot}-\eqref{eq:b-boot}. Moreover, \eqref{eq:g-boot}, \eqref{eq:y-boot} and~\eqref{eq:ap-boot} are clear from Lemma~\ref{lem:init-data}.

By the local Cauchy theory for~\eqref{eq:nlw}, it is clear that if a solution $\vec u$ satisfies~\eqref{eq:dec}
with~\eqref{eq:g-boot}-\eqref{eq:brouwer-boot} on some interval $[t,T]$, then the solution $\vec u$
also exists on $[t-\tau,T]$, for some $\tau>0$.

To decompose $\vec u(t)$ for $t<T$, the strategy is to
express the orthogonality conditions \eqref{eq:ortho} as a non-autonomous differential system
$\mathbf D \bGa'(t)=\mathbf F(t,\bGa(t))$, where $\mathbf F$ is continuous in $t$ and locally Lipschitz in $\bGa$, and the matrix $\mathbf D$ is a perturbation of the block matrix $\diag_K(D_0) \in \bR^{7K \times 7K}$, where
\[
D_0 = \diag\big( \|\nabla \Lambda W\|_{L^2}^2, \|\Lambda W\|_{L^2}^2, \big(\|\partial_{x_j} W\|_{L^2}^2\big)_{j=1, \ldots, 5}
\big) \in \bR^{7\times 7}.
\]
Then, (i) will follow from the Cauchy-Lipschitz theorem and continuity arguments. 
Moreover, estimates in~(ii) will follow from similar computations combined with~\eqref{eq:g-boot}-\eqref{eq:brouwer-boot}.

Formally, the evolution equation of $\gv(t):=(g,\dot g)(t)$ is
\begin{equation}\label{eq:g}
\partial_t \gv = J\,\vD E (\bWW+\gv)
- \bla' \partial_{\bla}\bWW - \bb'\partial_{\bb}\bWW
-\bs y' \cdot \partial_{\bs y}\bWW 
\end{equation}
which rewrites as
\begin{align}
\label{eq:dtg-1}
\partial_t g &= \dot g + \sum_{k=1}^K \lambda_k^{-1}(\lambda_k' + b_k)\Lambda_k W_k
+ \sum_{k=1}^K \lambda_k^{-1} y_k' \cdot\nabla_k W_k,
\\
 \label{eq:dtg-2}
 \partial_t \dot g &= \Delta g + f(\bW+ g)
- \sum_{k=1}^K f(W_k) \\
&\quad + \sum_{k=1}^K\lambda_k^{-2}\lambda_k'b_k\underline\Lambda_k\Lambda_k W_k
- \sum_{k=1}^K \lambda_k^{-1}b_k'\Lambda_k W_k
+\sum_{k=1}^K \lambda_k^{-2}b_k (y_k'\cdot \nabla_k) \Lambda W_k. 
\end{align}

Proof of \eqref{eq:lambda-mod}-\eqref{eq:y-mod}. We differentiate with respect to time the identity 
$0=\la \lambda_k^{-1}\Delta_k \Lambda_kW_k, g\ra$ which is the first orthogonality condition in \eqref{eq:ortho} 
and we use \eqref{eq:dtg-1}
\begin{equation}
\label{eq:lambda'}
\begin{aligned}
0 &= \dd t \la \lambda_k^{-1}\Delta_k \Lambda_kW_k, g\ra \\
& = \Big\la \lambda_k^{-1} \Delta_k\Lambda_k W_k, \dot g + \sum_j
\lambda_j^{-1}(\lambda_j' + b_j)\Lambda_j W_j
+ \sum_j \lambda_j^{-1}(y_j' \cdot\nabla_j) W_j\Big\ra \\
& \quad- \la \lambda_k' \lambda_k^{-2} \underline\Lambda_k\Delta_k\Lambda_k W_k, g\ra
 -\la \lambda_k^{-2} (y_k'\cdot \nabla_k) \Delta_k\Lambda_k W_k,g\ra.
\end{aligned}
\end{equation}
Rewrite the first term on the right-hand side as
\[
\la \lambda_k^{-1} \Delta_k\Lambda_k W_k, \dot g\ra
=\Big\la \lambda_k^{-1} \Delta_k\Lambda_k W_k, \partial_t u - \sum_k \lambda_k^{-1} {b_k} \Lambda_k W_k\Big\ra.
\]
Note that $\partial_t u$ is continuous in $L^2$ as a function $t$ and 
$\lambda_k^{-1} \Delta_k\Lambda_k W_k$ is locally Lipschitz in $L^2$ as a function of $\bGa$.
Thus, $\la \lambda_k^{-1} \Delta_k\Lambda_k W_k, \partial_t u \ra$ is continuous in $t$ and locally Lipschitz in $\bGa$.
For the second term above, one checks the same properties. Regularity in $t$ and $\bGa$ for all other terms appearing in the computations is proved similarly and omitted.

First, we estimate terms containing $g$ and $\dot g$,
\[
\left|\la \lambda_k^{-1}\Delta_k\Lambda_k W_k, \dot g\ra\right|
\lesssim \| \lambda_k^{-1}\Delta_k\Lambda_k W_k\|_{L^2} \|\dot g\|_{L^2}
\lesssim \|\vec g\|_\cE,
\]
next
\begin{equation}\begin{aligned}
\left|\la \lambda_k' \lambda_k^{-2} \underline \Lambda_k\Delta_k\Lambda_k W_k, g\ra\right|
&\lesssim |\lambda_k'| \|\lambda_k^{-2}\underline \Lambda_k\Delta_k \Lambda_k W_k\|_{L^{\frac{10}{7}}}\|g\|_{L^{\frac{10}{3}}}
\\&\lesssim |\lambda_k'| \|g\|_{\dot H^1}
\lesssim |\bs \lambda'+\bs b| \|\vec g\|_\cE + |\bs b|\|\vec g\|_\cE,
\end{aligned}\end{equation}
and similarly
\[
|\la \lambda_k^{-2} (y_k'\cdot \nabla_k)\Delta_k\Lambda_k W_k,g\ra|\lesssim |\bs y'|\|\vec g\|_\cE.
\]
Next, we claim that matrix $M^\lambda$ with coefficients $m_{jk}^\lambda
:= -\la \lambda_k^{-1} \Delta_k\Lambda_k W_k,\lambda_j^{-1} \Lambda_j W_j\ra$ is diagonally dominant and that its inverse is uniformly bounded. Indeed, for $j=k$, it holds
\[
m_{kk}^\lambda = \la \lambda_k^{-1} \Delta_k\Lambda_k W_k,\lambda_k^{-1} \Lambda_k W_k\ra
=\|\nabla \Lambda W\|_{L^2}^2,
\]
and for $j\neq k$, by \eqref{eq:inter}
\[
|m_{jk}^\lambda|
=\frac{\lambda_j}{\lambda_k}|\la \lambda_k^{-1} \nabla_k\Lambda_k W_k,\lambda_j^{-1} \nabla_j\Lambda_j W_j\ra|
\lesssim t^{-6}.
\]
Last, by symmetry $\la \Delta \Lambda W,\nabla W\ra=0$, and so
\[
\big\la \lambda_k^{-1} \Delta_k \Lambda_k W_k, \lambda_k^{-1}(y_k' \cdot\nabla_k) W_k\big\ra
=0;\]
 for $j\neq k$, by \eqref{eq:inter}
\[
\big| \la \lambda_k^{-1} \Delta_k\Lambda_k W_k, \lambda_j^{-1} (y_j' \cdot\nabla_j) W_j\ra\big|\lesssim t^{-6}|\bs y'|.
\]
Collecting these estimates, using $\|\vec g\|_\cE\lesssim t^{-\frac{11}3}$ and $|\bs b|\ll 1$, we obtain
\begin{equation}\label{sur:la}
|\bs \lambda'+\bs b|\lesssim \left(1+|\bs y'|\right) t^{-\frac {11}3} .
\end{equation}

We differentiate with respect to time the identity 
$0=\la \lambda_k^{-1}\nabla_kW_k, g\ra$ which is the second orthogonality condition in \eqref{eq:ortho} 
and we use \eqref{eq:dtg-1}
\begin{equation}
\label{eq:y'}
\begin{aligned}
0 &= \dd t \la \lambda_k^{-1}\nabla_kW_k, g\ra \\&
=\Big\la\lambda_k^{-1} \nabla_k W_k, \dot g + \sum_j
\lambda_j^{-1}(\lambda_j' + b_j)\Lambda_j W_j+\sum_j\lambda_j^{-1} (y_j' \cdot\nabla_j) W_j\Big\ra \\
&\quad - \la \lambda_k^{-2}\lambda_k' \uln \Lambda_k \nabla_k W_k, g\ra
-\la \lambda_k^{-2}( y_k'\cdot \nabla_k) \nabla_k W_k,g\ra.
\end{aligned}
\end{equation}
First, we have
\[
|\la\lambda_k^{-1} \nabla_k W_k, \dot g\ra|\leq \|\lambda_k^{-1} \nabla_k W_k\|_{L^2} \|\dot g\|_{L^2}
\lesssim \|\vec g\|_\cE.
\]
Second, by $\la \Lambda W,\nabla W\ra=0$ and~\eqref{eq:inter}, we obtain for any $k,j$,
\begin{equation}
 | \la\lambda_k^{-1} \nabla_k W_k, 
\lambda_j^{-1}(\lambda_j' + b_j)\Lambda_j W_j \ra |\lesssim
 t^{-2} |\bs \lambda'+\bs b|.
\end{equation}
Then, for $j=k$, it holds
\[
\la \lambda_k^{-1} \nabla_k W_k,\lambda_k^{-1} (y_k'\cdot \nabla_k) W_k\ra
= y_k' \|\partial_{x_1} W\|_{L^2}^2,
\]
and for $j\neq k$, by \eqref{eq:inter}
\[
|\la \lambda_k^{-1} \nabla_k W_k,\lambda_j^{-1} \nabla_j W_j\ra|
\lesssim t^{-6}.
\]
Next, as before,
\begin{align*}
 |\la \lambda_k^{-2} \lambda_k'\uln\Lambda_k \nabla_k W_k, g\ra |
&\lesssim |\lambda_k'| \|\lambda_k^{-2}\uln \Lambda_k \nabla_k W_k\|_{L^{\frac{10}7}}\|g\|_{L^{\frac{10}3}}
\\
&\lesssim |\lambda_k'| \|g\|_{\dot H^1} 
\lesssim |\bs \lambda'+\bs b| \|\vec g\|_\cE + |\bs b|\|\vec g\|_\cE,
\end{align*}
and
\begin{equation}
 |\la \lambda_k^{-2} (y_k'\cdot \nabla_k) \nabla_k W_k,g\ra|
\lesssim |\bs y'| \|\vec g\|_\cE. 
\end{equation}
Collecting these estimates, using $\|\vec g\|_\cE\lesssim t^{-\frac{11}3}$ and $|\bs b|\ll 1$, we obtain
\begin{equation}\label{sur:y}
|\bs y'|\lesssim t^{-2} |\bs \lambda'+\bs b|+ t^{-\frac{11}3}.
\end{equation}
Combining \eqref{sur:la} and \eqref{sur:y}, we have proved
$
|\bs \lambda'+\bs b|+|\bs y'|\lesssim t^{-\frac{11}3}$, which is \eqref{eq:lambda-mod}-\eqref{eq:y-mod}.

Proof of \eqref{eq:b-mod}. We differentiate with respect to time the identity $\la \Lambda_k W_k ,\dot g \ra=0$ which is the third orthogonality condition in \eqref{eq:ortho} and we use \eqref{eq:dtg-2}
\begin{align*}
0 & = \dd t \la \Lambda_k W_k, \dot g\ra \\&
= {-}\la \lambda_k'\lambda_k^{-1} \Lambda_k^2 W_k, \dot g\ra
-\la \lambda_k^{-1} (y_k'\cdot \nabla_k) \Lambda_k W_k,\dot g\ra \\
&\quad + \Big\la \Lambda_k W_k, \Delta g
+ f(\bW+g)
- \sum_j f\big(W_j\big)\Big\ra \\
&\quad +\Big\la \Lambda_k W_k, \sum_j \lambda_j^{-2} \lambda_j'b_j\underline\Lambda_j\Lambda_j W_j\Big\ra
 -\Big\la \Lambda_k W_k, \sum_j \lambda_j^{-1}b_j'\Lambda_j W_j\Big\ra\\
& \quad +\Big\la \Lambda_k W_k, \sum_j \lambda_j^{-2} b_j (y_j'\cdot \nabla_j) \Lambda W_j\Big\ra.
\end{align*}
For the first two terms, we observe from \eqref{eq:lambda-mod}, $|\bs b|\lesssim t^{-3}$ and $\|\vec g\|_\cE\lesssim t^{-\frac{11}3}$
\[
|\la \lambda_k'\lambda_k^{-1} \Lambda_k^2 W_k, \dot g\ra|
\leq |\lambda_k'| \|\lambda_k^{-1}\Lambda_k^2 W_k\|_{L^2} \|\dot g\|_{L^2}\lesssim 
|\bs \lambda'|\|\vec g\|_\cE\lesssim t^{-\frac {20}3},
\]
and from \eqref{eq:y-mod},
\[
|\la \lambda_k^{-1} (y_k'\cdot \nabla_k) \Lambda_k W_k,\dot g\ra|
\leq \|\lambda_k^{-1} (y_k'\cdot \nabla_k) \Lambda_k W_k\|_{L^2} \|\dot g\|_{L^2}\lesssim 
|\bs y'|\|\vec g\|_\cE\lesssim t^{-\frac {22}3}.
\]
For the next line in the identity above, we set
\[
I_1= \la \Lambda_k W_k, \Delta g+ f(\bW+g)-f(\bW)\ra,\quad
I_2= \Big \la \Lambda_k W_k,f(\bW) - \sum_j f\big(W_j\big)\Big\ra.
\]
We first note that, using the cancellation $L\Lambda W=0$,
\begin{align*}
I_1&=\la \Lambda_k W_k,f(\bW+g)-f(\bW) -f'(W_k) g\ra
\\&=\la \Lambda_k W_k,f(\bW+g)-f(\bW) -f'(\bW) g\ra
+\la \Lambda_k W_k,(f'(\bW)-f'(W_k)) g\ra=I_3+I_4.
\end{align*}
By the Taylor inequality, 
\[
|f(\bW+g)-f(\bW) -f'(\bW) g|\lesssim \bW^{\frac 13} |g|^2 + |g|^{\frac 73},
\]
and so, by Holder and Sobolev inequalities
\begin{equation*}
|I_3|\lesssim \int \bW^{\frac 43} |g|^2 + \int \bW |g|^{\frac 73}
\lesssim \|\bW\|_{\dot H^1}^{\frac 43} \|g\|_{\dot H^1}^{2}
+\|\bW\|_{\dot H^1} \|g\|_{\dot H^1}^{\frac73}
\lesssim t^{-\frac{22}3}.
\end{equation*}
By the Taylor inequality,
\[
\left| f'(\bW) - f'(W_k)\right|\lesssim \sum_{j\neq k} \Big(W_j^{\frac 43} + W_k^\frac13 W_j\Big),
\]
and thus
\begin{equation*}
|I_4|\lesssim \sum_{j\neq k} \int \Big(W_k W_j^{\frac 43}+ W_k^\frac43 W_j\Big) |g|
\lesssim \|g\|_{L^{\frac{10}3}} \sum_{j\neq k}\left( \|W_k W_j^{\frac 43}\|_{L^{\frac{10}7}}+\|W_k^{\frac 43} W_j\|_{L^{\frac{10}7}}\right)
.
\end{equation*}
For $j\neq k$, by~\eqref{eq:inter} we have
 $|I_4|\lesssim t^{-6} \|\vec g\|_\cE\lesssim t^{-\frac{29}3}$.
Therefore, $|I_1|\lesssim t^{-\frac{22}3}$.

We turn to $I_2$ and set
\[
I_2=\Big \la \Lambda_k W_k,f(\bW) - \sum_j f\big(W_j\big) - f'(W_k) \sum_{j\neq k} W_j\Big\ra
+\Big \la \Lambda_k W_k,f'(W_k) \sum_{j\neq k} W_j\Big\ra=I_5+I_6.
\]
Using \eqref{taylor2}, we have
\begin{align*}
|\Lambda W_k| \Big|f(\bW) - \sum_j f\big(W_j\big) - f'(W_k) \sum_{j\neq k} W_j\Big|
&\lesssim W_k^{\frac 53}\sum_{j\neq k} W_j^{\frac 53} + W_k \sum_{j\neq l, j\neq k, l\neq k} W_j W_l^{\frac 43}\\
&\lesssim \sum_{j\neq l} W_j^{\frac 53}W_l^{\frac 53}.
\end{align*}
Thus, using \eqref{eq:inter}, we obtain $|I_5|\lesssim t^{-10} \log t$.

Last, to estimate $I_6$, we only have to consider $\la \Lambda_k W_k , f'(W_k) W_j\ra$ for all $j\neq k$.
By change of variable,
\[
\la \Lambda_k W_k , f'(W_k) W_j\ra=\frac73 \lambda_j^{-\frac 32}\lambda_k^{\frac 32}
\int_{\bR^5} W(x)^\frac 43\Lambda W(x)W\left( \frac{\lambda_k}{\lambda_j} x - \frac{z_j-z_k}{\lambda_j}\right)\ud x.
\]
For $|x|\geq \lambda_k^{-1} d$, it holds by $W(x)\leq |x|^{-3}$ and then Cauchy-Schwarz inequality
\begin{align*}
\int_{|x|\geq \lambda_k^{-1} d} W^\frac 43(x)|\Lambda W(x)| W\left( \frac{\lambda_k}{\lambda_j} x - \frac{z_j-z_k}{\lambda_j}\right)\ud x
&\lesssim \lambda_k^{4} \int W(x)W\left( \frac{\lambda_k}{\lambda_j} x - \frac{z_j-z_k}{\lambda_j}\right)\ud x
\\&\lesssim \lambda_k^{4}\lesssim t^{-8}.
\end{align*}
For $|x|\leq \lambda_k^{-1} d$, it holds
\[
\Big| W\left( \frac{\lambda_k}{\lambda_j} x - \frac{z_j-z_k}{\lambda_j}\right)-W\left(\frac{z_j-z_k}{\lambda_j}\right)\Big|
\lesssim |x| \left|\frac{z_j-z_k}{\lambda_j}\right|^{-4}\lesssim t^{-8} |x|,
\]
and by the explicit expression of $W$, for $|y|\geq 1$,
\[
\left|W(y)-15^\frac 32 |y|^{-3}\right| \leq |y|^{-5}.
\]
We obtain for such $x$,
\[
\Big| W\left( \frac{\lambda_k}{\lambda_j} x - \frac{z_j-z_k}{\lambda_j}\right)-15^\frac32\lambda_j^3 |z_j-z_k|^{-3}\Big|
\lesssim \left|\frac{z_j-z_k}{\lambda_j}\right|^{-5}+t^{-8} |x| \lesssim t^{-8} (1+|x|).
\]
We deduce from these estimates
\begin{align*}
&\Big| \int_{\bR^5} W(x)^\frac 43\Lambda W(x) W\left( \frac{\lambda_k}{\lambda_j}x - \frac{z_j-z_k}{\lambda_j}\right)\ud x
- 15^\frac32\la W^\frac 43,\Lambda W\ra\lambda_j^3 |z_j-z_k|^{-3} \Big|
\\ &\quad \lesssim t^{-8} + t^{-8} \int_{|x|\leq \lambda_k^{-1} d} (1+|x|) W^{\frac 73}(x) \ud x \lesssim t^{-8}.
\end{align*}
Therefore,
\begin{equation*}
\left| \la \Lambda_k W_k , f'(W_k) W_j\ra-\frac73 15^\frac32\la W^\frac 43,\Lambda W\ra\lambda_j^{\frac 32}\lambda_k^{\frac 32} |z_j-z_k|^{-3} \right|
\lesssim t^{-8},
\end{equation*}
and by the definition of $B_k(\bla)$ and $\kappa$ in \eqref{eq:Bk},
\[
|I_6 - \lambda_k\|\Lambda W\|_{L^2}^2 B_k(\bla)|\lesssim t^{-8}.
\]
Next, for $j\neq k$, using \eqref{eq:inter},
\begin{equation*}
|\la \Lambda_k W_k, \lambda_j^{-2} \lambda_j'b_j\underline\Lambda_j\Lambda_j W_j\Big\ra|
\lesssim |\bla'| |\bb| t^{-2}\lesssim t^{-8},
\end{equation*}
while the identity $\la \Lambda W,\underline\Lambda\Lambda W\ra=0$ takes care of the corresponding term for $j=k$.

For the terms $ \la \Lambda_k W_k, \lambda_j^{-1}b_j'\Lambda_j W_j \ra$, we observe if $j=k$ that
\[
\la \Lambda_k W_k, \lambda_k^{-1} \Lambda_k W_k \ra=\lambda_k \|\Lambda W\|_{L^2}^2,
\]
and if $j\neq k$, by \eqref{eq:inter}, $|\la \Lambda_k W_k, \lambda_j^{-1} \Lambda_j W_j \ra|\lesssim t^{-4}$.

Last, for any $j,k$,
\begin{equation*}
 |\la \Lambda_k W_k, \lambda_j^{-2} b_j (y_j'\cdot \nabla_j) \Lambda W_j \ra|
 \lesssim |\bb| |\yy'|\lesssim t^{-\frac{20}3}.
\end{equation*}
Collecting these estimates, we have proved, for all $k=1,\ldots,K$,
\[
|b_k' -B_k(\bla)|\lesssim t^{-\frac {14}3} + t^{-2} |\bb '|,
\]
and since $|B_k(\bla)|\lesssim t^{-4}$, \eqref{eq:b-mod} follows.
 
Proof of~\eqref{eq:a-mod}. By the definition of $a_k^{\pm}$ in~\eqref{def:a}, we compute
$\frac{\ud}{\ud t}a_k^{\pm}= \la \partial_t \vec Z_k^\pm,\gv\ra+ \la \vec Z_k^\pm,\partial_t\gv\ra$.
First,
\[
 \partial_t \vec Z_k^\pm=\lambda_k' \partial_{\lambda_k} \vec Z_k^\pm + y_k' \cdot \partial_{y_k} \vec Z_k^\pm ,
\]
Since $Y$ is exponentially decaying, we obtain from the definition of $\vec Z_k^\pm$, ~\eqref{eq:lambda-mod}-\eqref{eq:y-mod} and~\eqref{eq:g-boot}-\eqref{eq:ap-boot}, the estimate
\[
|\la \partial_t \vec Z_k^\pm,\gv\ra|\lesssim \left(\left|\frac{\lambda_k'}{\lambda_k}\right|
+\left|\frac{y_k'}{\lambda_k}\right|\right) \|\gv \|_\cE
\lesssim t^{-\frac{14}3}. 
\]
Second, using~\eqref{eq:g},
\begin{align*}
\la \vec Z_k^\pm,\partial_t\gv\ra &=
\la \vec Z_k^\pm,J\,\vD E (\bWW+\gv)\ra 
- \la \vec Z_k^\pm,\bla' \partial_{\bla}\bWW \ra
- \la \vec Z_k^\pm,\bb'\partial_{\bb}\bWW\ra
-\la \vec Z_k^\pm,\bs y' \cdot \partial_{\bs y}\bWW \ra
\end{align*}
Using
\begin{equation*}
\bla' \partial_{\bla}\bWW
=\sum_j \begin{pmatrix} \lambda_j^{-1} \lambda_j' \Lambda_j W_j, \lambda_j^{-2} \lambda_j' b_j \underline\Lambda_j \Lambda_j W_j\end{pmatrix},
\end{equation*}
$\la Y_k, \Lambda_k W_k\ra =0$, $|\la \vec Z_k^\pm, \Lambda_j W_j\ra|\lesssim t^{-6}$ for $j\neq k$,
and estimates \eqref{eq:b-boot}, \eqref{eq:lambda-mod}, we obtain
\begin{equation*}
|\la \vec Z_k^\pm,\bla' \partial_{\bla}\bWW \ra|\lesssim t^{-4}.
\end{equation*}
Similarly, using
\begin{equation*}
\bb' \partial_{\bb}\bWW
=\sum_j \begin{pmatrix} 0, \lambda_j^{-1} b_k' \Lambda_j W_j\end{pmatrix},\quad
\bs y' \cdot\partial_{\bs y}\bWW
=\sum_j \begin{pmatrix} \lambda_j^{-1}( y_j'\cdot \nabla_j ) W_j,\lambda_j^{-2} b_j (y_j'\cdot \nabla_j) \Lambda_j W_j\end{pmatrix},
\end{equation*}
$\la Y_k, \nabla_k W_k\ra =0$, and estimates \eqref{eq:b-boot}, \eqref{eq:y-mod}, \eqref{eq:b-mod}, it holds
\begin{equation*}
|\la \vec Z_k^\pm,\bb' \partial_{\bb}\bWW \ra|+|\la \vec Z_k^\pm,\bs y'\cdot \partial_{\bs y}\bWW \ra|\lesssim t^{-4}.
\end{equation*}
Now, we have
\begin{equation*}
J\,\vD E (\bWW+\gv) = \Big (\sum_j \lambda_j^{-1} b_j \Lambda_j W_j , f(\bW+g)-\sum_j f(W_k)
- f'(W_k) g\Big)+ J\, \vD^2 E (W_k)\gv .
\end{equation*}
As before, for all $j$, it holds $|\la \vec Z_k^\pm, ( \lambda_k^{-1} b_j \Lambda_j W_j,0) \ra|\lesssim t^{-4}$,
and arguing as in the proof of \eqref{eq:b-mod}
\begin{equation*}
\Big|\Big\la \lambda_k^{-1} Y_k,f(\bW+g)-\sum_j f(W_k)- f'(W_k) g\Big \ra\Big|\lesssim t^{-4}.
\end{equation*}
Last, we check by direct computations using $L Y = -\nu^2 Y$ that
$\la \vec Z_k^\pm , J\, \vD^2 E (W_k)\gv\ra= \pm {\nu}{\lambda_k^{-1}} a_k^\pm$,
which completes the proof of~\eqref{eq:a-mod}.
\end{proof}
The following statement is the main part of the proof of Theorem~\ref{thm:constr-N5}.
\begin{proposition}
\label{prop:bootstrap}
For any $T > T_0$, there exist $(\alpha_0,\alpha_1,\ldots,\alpha_K)\in \bar \cB_{\bR^{K+1}}$ such that the solution $\uv$
of \eqref{eq:nlw} with data $\uv(T)$ given by Lemma~\ref{lem:init-data} satisfies
$T_\star=T_0$.
\end{proposition}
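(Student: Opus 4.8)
The proof is a bootstrap argument in which the finitely many backward-unstable directions are fixed by a topological (Brouwer) shooting argument. Fix $T>T_0$; for $\vec\alpha=(\alpha_0,\dots,\alpha_K)\in\bar\cB_{\bR^{K+1}}$ write $\uv=\uv[T,\vec\alpha]$ for the solution of \eqref{eq:nlw} issued from the data of Lemma~\ref{lem:init-data}, decomposed as in \eqref{eq:dec} with $\bGa$ fixed by \eqref{eq:ortho}, and let $T_\star=T_\star(\vec\alpha)$ be the associated exit time. Arguing by contradiction, I assume $T_\star(\vec\alpha)>T_0$ for every $\vec\alpha$. I will show, first, that the stable estimates \eqref{eq:g-boot}--\eqref{eq:ap-boot} are improved \emph{strictly} on $[T_\star,T]$, so that by Lemma~\ref{lem:mod}(i) the estimate saturated at $t=T_\star$ is necessarily \eqref{eq:brouwer-boot}; second, that the $(K+1)$-dimensional quantity controlled by \eqref{eq:brouwer-boot} is outgoing at the boundary; and third, that this produces a continuous retraction of $\bar\cB_{\bR^{K+1}}$ onto $\bS^K$, contradicting Brouwer's theorem.

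\emph{The energy estimate} is the core of the first step. I use a coercive functional $\mathcal{H}(t)$ built from the quadratic part $\la\vD^2E(\bWW)\gv,\gv\ra$ of the energy together with localized virial corrections of the form $\sum_k\lambda_k^{-1}b_k\la\chi_k\Lambda_k g,\dot g\ra$, with cut-offs $\chi_k$ at scale $\sim d$ around $\zz_k$, needed because of the $\bb$-dependent part of the ansatz \eqref{ansatz}. By the coercivity Lemma~\ref{le:coer-multi}, the orthogonality conditions \eqref{eq:ortho} (which annihilate the $\Delta\Lambda W$ and $\nabla W$ directions) and the identity $\lambda_k^{-2}\la\YY_k,g\ra=\nu^{-1}(a_k^++a_k^-)$, one gets $\mathcal{H}\geq\eta\|\gv\|_\cE^2-C\sum_k\big((a_k^+)^2+(a_k^-)^2\big)$. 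Differentiating $\mathcal{H}$ along the flow \eqref{eq:g}--\eqref{eq:dtg-2}, the purely symplectic contribution vanishes, $\partial_t\vD^2E$ produces an $O(t^{-1}\mathcal{H})$ term, and the modulation and bubble--bubble source terms are treated exactly as in the proof of Lemma~\ref{lem:mod}: the $\Lambda_kW_k$- and $\nabla_kW_k$-directed pieces are killed by \eqref{eq:ortho} through $L\Lambda W=0$, and the genuine interactions are bounded using Lemma~\ref{cl:inter} and \eqref{taylor1}--\eqref{taylor2}. Integrating the resulting differential inequality backward from $t=T$, where $\|\gv(T)\|_\cE\lesssim T^{-4}$, and combining with the coercivity bound, with $\sum_k(a_k^+)^2\lesssim t^{-12}$ (proved below) and with $\sum_k(a_k^-)^2\le t^{-8}$ from \eqref{eq:brouwer-boot}, yields $\|\gv(t)\|_\cE\lesssim t^{-4}$ on $[T_\star,T]$, which strictly improves \eqref{eq:g-boot} for $T_0$ large.

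For the remaining stable quantities: from \eqref{eq:a-mod}, $(a_k^+)'=\nu\lambda_k^{-1}a_k^++O(t^{-4})$ with $a_k^+(T)=0$, and since $\lambda_k^{-1}\gtrsim t^2$ this mode is backward-contracting, so integrating backward gives $|a_k^+(t)|\lesssim t^{-6}$ and \eqref{eq:ap-boot} is strictly improved; from \eqref{eq:y-mod} and $\bs y(T)=\bzz$, $|\bs y(t)-\bzz|\lesssim t^{-8/3}$, which improves \eqref{eq:y-boot}. Comparing $(\bla,\bb)$ with \eqref{regime}, the radial deviation satisfies $\big||\bla|-|\bcc|t^{-2}\big|\le t^{-12/5}$ by \eqref{eq:brouwer-boot}, beating \eqref{eq:lambda-boot} since $12/5>7/3$; the component of $\bla-\bcc t^{-2}$ tangent to $\bS^{K-1}_+$ vanishes at $t=T$ by \eqref{eq:param-init} and, using \eqref{eq:lambda-mod}--\eqref{eq:b-mod}, evolves under the linearization of $\bla''=-\bs B(\bla)$, whose restriction to $\bcc^\perp$ is controlled by the Hessian of $V$ at the \emph{interior} minimum $\uln{\bs\theta}$ (Lemma~\ref{le:l}); this keeps that component $\lesssim t^{-8/3}$, so \eqref{eq:lambda-boot} and \eqref{eq:b-boot} are strictly improved. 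Consequently, if $T_\star>T_0$, Lemma~\ref{lem:mod}(i) forces equality at $t=T_\star$ in \eqref{eq:brouwer-boot}.

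Set $\vec N(t):=\big(t^{12/5}(|\bla(t)|-|\bcc|t^{-2}),\,t^4a_1^-(t),\dots,t^4a_K^-(t)\big)\in\bR^{K+1}$, so that by \eqref{eq:param-init}--\eqref{eq:param-init-bis} we have $\vec N(T)=\vec\alpha$ and \eqref{eq:brouwer-boot} reads $|\vec N(t)|^2\le1$. I claim $\tfrac{\vd}{\vd t}|\vec N(t)|^2<0$ whenever $|\vec N(t)|^2=1$ and the other bootstrap bounds hold: by \eqref{eq:a-mod}, $\tfrac{\vd}{\vd t}(t^4a_k^-)^2\leq-\nu\lambda_k^{-1}(t^4a_k^-)^2+O(1)$; and, since the choice of $\bb(T)$ in \eqref{eq:param-init} places the radial deviation on the stable branch $s\sim t^{-3}$ of $s''=12t^{-2}s$ to leading order, $\tfrac{\vd}{\vd t}\big(t^{12/5}(|\bla|-|\bcc|t^{-2})\big)^2=-\tfrac65 t^{-1}\big(t^{12/5}(|\bla|-|\bcc|t^{-2})\big)^2+o(t^{-1})$ by \eqref{eq:lambda-mod}--\eqref{eq:b-mod}; on $|\vec N|^2=1$ at least one coordinate is $\gtrsim1$, and its negative term dominates all errors for $T_0$ large. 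Hence $t\mapsto|\vec N(t)|^2$ crosses the value $1$ transversally, $T_\star$ is the first such (backward) crossing, and $\vec\alpha\mapsto T_\star(\vec\alpha)$ is continuous (using the continuity in Lemma~\ref{lem:init-data} and continuous dependence for \eqref{eq:nlw}). The map $\cP:\bar\cB_{\bR^{K+1}}\to\bS^K$, $\cP(\vec\alpha):=\vec N(T_\star(\vec\alpha))$, is then continuous, takes values in $\bS^K$, and satisfies $\cP(\vec\alpha)=\vec\alpha$ for $|\vec\alpha|=1$ (there $|\vec N(T)|^2=1$, so transversality forces $T_\star=T$); this is a retraction of $\bar\cB_{\bR^{K+1}}$ onto its boundary, contradicting Brouwer's theorem, so some $\vec\alpha$ gives $T_\star(\vec\alpha)=T_0$. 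The main obstacle is the energy estimate: designing $\mathcal{H}$ so that it is coercive modulo only the $a_k^\pm$ and has a time derivative small enough to be closed by a backward Gronwall argument at rate $t^{-4}$; the transversality of the radial mode, governed by $\bs B(r\bs\theta)=r^2\nabla V(\bs\theta)$ and the interior minimality of $V$, is the secondary delicate point.
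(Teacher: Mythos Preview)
Your global architecture (strictly improve the stable bounds, then close \eqref{eq:brouwer-boot} by transversality and Brouwer) is the right one, but the energy step has a genuine gap. The obstruction is the bubble--bubble forcing: in \eqref{eq:dtg-2} the term $f(\bW)-\sum_k f(W_k)$ has $L^2$ size only $O(t^{-4})$ (near $z_k$ it is essentially $f'(W_k)\sum_{j\ne k}W_j$ with $\|f'(W_k)\|_{L^2}\sim t^{-1}$ and $W_j\sim t^{-3}$). Fed into a quadratic functional of the kind you describe, this contributes $\sim t^{-4}\|\dot g\|_{L^2}\lesssim t^{-23/3}$ to $\cH'$, which after backward integration gives $\cH\lesssim t^{-20/3}$ and hence only $\|\gv\|_\cE\lesssim t^{-10/3}$, \emph{worse} than the bootstrap bound $t^{-11/3}$; your claimed $t^{-4}$ is not attainable here. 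What is missing is a \emph{refined approximate solution}: the paper introduces profiles $Q,S$ with $LQ=c\,f'(W)+\Lambda W$, $LS=\uln\Lambda\Lambda W$ (Lemma~\ref{le:8}), sets $P=\sum_k\chi(\tfrac{\cdot-z_k}{d})(\lambda_kB_k(\bla)Q_k+b_k^2S_k)$ and $h=g-P$, and runs the energy--virial estimate on $(h,\dot g)$. Lemma~\ref{lem:ansatz-err} then upgrades the forcing to $O(t^{-5})$ in $L^2$, which is exactly what is needed to get $\cH'\geq -\delta t^{-25/3}$ and close \eqref{eq:g-boot} at $\tfrac12 t^{-11/3}$. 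Relatedly, your virial correction $\lambda_k^{-1}b_k\la\chi_k\Lambda_k g,\dot g\ra$ is not the right object: the paper uses $-b_k\la\dot g,\uln A_k h\ra$ with $\uln A_k$ built from the special function $q$ of Lemma~\ref{le:a}, so that Lemma~\ref{le:Ak}(iii) together with Lemma~\ref{le:coer} recovers localized coercivity and cancels the critical $t^{-25/3}$ terms in \eqref{surdI}.

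There is a smaller gap in the scaling/transversality part. The identity $\tfrac{\vd}{\vd t}\wt a_0^2=-\tfrac65 t^{-1}\wt a_0^2+o(t^{-1})$ does not follow from \eqref{eq:lambda-mod}--\eqref{eq:b-mod} alone: one needs $|r'+2|\bcc|^{-1/2}r^{3/2}|\lesssim t^{-31/9}$, i.e.\ that $\rho$ stays on the stable branch. The paper obtains this via an approximate first integral $\cG=\tfrac12\rho^2-2|\bcc|^{-1}r^3$ with $\cG'=O(t^{-67/9})$; combined with a Lyapunov functional $\cF=\tfrac12 r^{-3}|\bs b^\perp|^2+V(\bs\theta)$ for the tangential part (using only that $V$ attains its minimum at $\bcc/|\bcc|$, not the Hessian), this yields \eqref{eq:lambda-boot-2}--\eqref{eq:b-boot-2} and the key estimate \eqref{eq:dtr-1st-ord} feeding into the transversality computation. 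Your ``linearization controlled by the Hessian'' sketch points in the right direction but does not deliver these bounds.
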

In Sections~\ref{s:3.4}-\ref{s:3.6}, devoted to the proof of Proposition~\ref{prop:bootstrap}, 
 we tacitly use the following
 direct consequences of~\eqref{eq:g-boot}-\eqref{eq:brouwer-boot} and Lemma~\ref{lem:mod}
\begin{equation}\label{tacit}\begin{aligned}
&\lambda_k(t) \simeq t^{-2},\ b_k(t) \simeq t^{-3},\ |\lambda_k'(t)| \lesssim t^{-3},\ |b_k'(t)| \lesssim t^{-4},\ |y_k'(t)| \lesssim t^{-\frac{11}3},\\
& |\bs B(\bla(t))| \lesssim t^{-4},\ \Big|\frac{\ud}{\ud t}\bs B(\bla(t))\Big| \lesssim t^{-4}.
\end{aligned}
\end{equation}
\subsection{Refined approximate solution}\label{s:3.4}
\begin{lemma}\label{le:8}
There exist smooth radially symmetric functions $Q$, $S$
satisfying on $\bR^5$, for all $\beta\in \bN^5$,
\begin{align*}
LQ &= \frac{105\pi}{128}f'(W) + \Lambda W, \quad |\partial_x^\beta Q(x)| \lesssim |x|^{-1-|\beta|}, \\
LS &= \uln\Lambda \Lambda W, \quad |\partial_x^\beta S(x)| \lesssim |x|^{-1-|\beta|}.
\end{align*}
\end{lemma}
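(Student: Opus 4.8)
The plan is to treat both identities as a single problem: solve $Lu=h$ for a smooth radial right-hand side $h$ with $|\partial_x^\beta h(x)|\lesssim (1+|x|)^{-3-|\beta|}$, where $h=\frac{105\pi}{128}f'(W)+\Lambda W$ in the first case and $h=\uln\Lambda\Lambda W$ in the second; the decay estimates for $h$ are immediate from the explicit formula for $W$, since $f'(W)=\tfrac73W^{4/3}=O((1+|x|)^{-4})$ while $\Lambda W$ and $\uln\Lambda\Lambda W$ are $O((1+|x|)^{-3})$. In the variable $r=|x|$, the homogeneous equation $Lv=0$ has a regular singular point at $r=0$ with indicial roots $0$ and $-3$; the indicial root $0$ gives the explicit smooth solution $\Lambda W$, which behaves like $c_1|x|^{-3}$ with $c_1=\tfrac12\,15^{3/2}\neq0$ as $|x|\to\infty$. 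Because $f'(W)=O((1+|x|)^{-4})$ decays fast, a standard asymptotic analysis of the ODE at infinity shows that its solutions are, to leading order, linear combinations of the radial harmonic functions $1$ and $|x|^{-3}$; combined with $-\Delta(|x|^{-1})=2|x|^{-3}$ in $\bR^5$ (the $O((1+|x|)^{-4})$ part of $h$ only producing an $O(|x|^{-2})$ correction), this shows that every radial solution of $Lu=h$ satisfies $u(x)=\gamma+O(|x|^{-1})$ as $|x|\to\infty$ for some $\gamma\in\bR$.

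I would then construct such a solution by the Frobenius method at $r=0$ (the indicial root $0$ gives a solution smooth at the origin) followed by continuation along the ODE, whose coefficients are smooth for $r>0$; this yields $u\in C^\infty(\bR^5)$ radial with $Lu=h$, unique modulo $\spn(\Lambda W)$, and $u(x)=\gamma+O(|x|^{-1})$. To compute $\gamma$, multiply $Lu=h$ by $\Lambda W$, integrate over $\{|x|\le R\}$, and apply Green's identity using $L\Lambda W=0$ (so that $\Lambda W\,\Delta u-u\,\Delta\Lambda W=-h\,\Lambda W$): the bulk term equals $-\int_{|x|\le R}h\,\Lambda W\to-\langle h,\Lambda W\rangle_{L^2(\bR^5)}$ (a convergent integral, as $h\,\Lambda W=O((1+|x|)^{-6})$), while the boundary term $\int_{|x|=R}(\Lambda W\,\partial_r u-u\,\partial_r\Lambda W)$, evaluated with $u\to\gamma$, $\partial_ru=O(|x|^{-2})$, $\Lambda W\sim c_1|x|^{-3}$ and $\partial_r\Lambda W\sim-3c_1|x|^{-4}$, tends to $3c_1|\bS^4|\,\gamma$. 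Hence
\[
\gamma=-\frac{\langle h,\Lambda W\rangle_{L^2}}{3\,c_1\,|\bS^4|},
\]
and since $c_1\neq0$, the solution $u$ decays like $|x|^{-1}$ precisely when $\langle h,\Lambda W\rangle_{L^2}=0$.

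It remains to verify this orthogonality for both choices of $h$; this is the crux, and it explains the numerical constants. For $h=\uln\Lambda\Lambda W$ with $\uln\Lambda=\tfrac52+x\cdot\nabla$, an integration by parts (the boundary term on $\{|x|=R\}$ is $O(|x|^{-1})\to0$ because $\Lambda W=O((1+|x|)^{-3})$) gives $\langle\uln\Lambda v,w\rangle=-\langle v,\uln\Lambda w\rangle$, hence $\langle\Lambda W,\uln\Lambda\Lambda W\rangle=0$. For $h=\frac{105\pi}{128}f'(W)+\Lambda W$, the condition $\langle h,\Lambda W\rangle=0$ is equivalent to $\frac{105\pi}{128}\langle f'(W),\Lambda W\rangle=-\|\Lambda W\|_{L^2}^2$; writing $\langle f'(W),\Lambda W\rangle=\tfrac73\langle W^{4/3},\Lambda W\rangle$ and inserting the definition of $\kappa$ from \eqref{eq:Bk} together with $\kappa=\frac{128\sqrt{15}}{7\pi}$ (so that $\frac{105\pi}{128}\,\kappa=15^{3/2}$), one gets $\frac{105\pi}{128}\langle f'(W),\Lambda W\rangle=-\frac{105\pi}{128}\,\frac{\kappa}{15^{3/2}}\,\|\Lambda W\|_{L^2}^2=-\|\Lambda W\|_{L^2}^2$, exactly as needed — the value $\frac{105\pi}{128}$ is chosen precisely so this holds.

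With $\gamma=0$ in both cases we obtain $|u(x)|\lesssim(1+|x|)^{-1}$ globally ($u$ being smooth, hence bounded, near $0$). The derivative bounds $|\partial_x^\beta u(x)|\lesssim(1+|x|)^{-1-|\beta|}$ then follow from interior Schauder estimates for $\Delta u=f'(W)u-h$ on the dyadic annuli $\{|x|\sim\rho\}$ rescaled to unit scale, using $|u|\lesssim\rho^{-1}$ and $|f'(W)u-h|\lesssim\rho^{-3}$ there (near the origin the bound is trivial). Setting $Q:=u$ for $h=\frac{105\pi}{128}f'(W)+\Lambda W$ and $S:=u$ for $h=\uln\Lambda\Lambda W$ completes the proof. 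The main obstacle is precisely the solvability/decay dichotomy encoded in the formula for $\gamma$: without the orthogonality $\langle h,\Lambda W\rangle=0$ the constructed solution tends to a nonzero constant at infinity and no $|x|^{-1}$-decaying solution exists; the remaining steps — Frobenius at the origin, the ODE asymptotics at infinity, and the elliptic bootstrap for the derivatives — are routine.
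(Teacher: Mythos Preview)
Your proof is correct and captures exactly what the paper indicates: the paper does not give a self-contained argument but refers to \cite[Proposition~2.1]{JJjfa}, adding only the remark that the constant $\tfrac{105\pi}{128}$ is determined by the orthogonality $\big\langle\tfrac{105\pi}{128}f'(W)+\Lambda W,\Lambda W\big\rangle=0$. Your sketch supplies precisely the mechanism behind that remark --- Frobenius at the origin, the constant-at-infinity obstruction computed via Green's identity against the kernel element $\Lambda W$, and elliptic bootstrap on dyadic annuli --- which is the standard route and presumably the one in the cited reference. One harmless slip: from $W\sim 15^{3/2}|x|^{-3}$ one gets $\Lambda W=\tfrac32W+x\cdot\nabla W\sim(\tfrac32-3)\,15^{3/2}|x|^{-3}$, so $c_1=-\tfrac32\cdot15^{3/2}$ rather than $\tfrac12\cdot15^{3/2}$; only $c_1\neq0$ is used, so the argument is unaffected.
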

For a proof, see \cite[Proposition 2.1]{JJjfa}.
Note that the explicit constant $\frac{105\pi}{128}$ is related to the orthogonality condition
\[
\Big\langle \frac{105\pi}{128}f'(W) + \Lambda W, \Lambda W\Big\rangle=0.
\]
In the framework of Proposition~\ref{prop:bootstrap}, we set
\[
Q_k:=Q_{\lambda_k}(\cdot - y_k),\quad 
S_k:=S_{\lambda_k}(\cdot - y_k),
\]
\begin{equation}
P:= \sum_k\chi\Big(\frac{\cdot - z_k}{d}\Big)\big(\lambda_k B_k(\bs \lambda )Q_k + b_k ^2 S_k \big),
\end{equation}
where $\chi$ is defined in \S\ref{sec:notation}.
We consider the following refined decomposition of $u$
\begin{equation}
\phi : = \bW + P,\quad h:=g - P
\quad \mbox{so that}\quad u=\bW+g=\phi+h.
\end{equation}
\begin{lemma}
\label{lem:ansatz-err}
Under the bootstrap estimates \eqref{eq:g-boot}-\eqref{eq:brouwer-boot}, it holds
\begin{align}
\|g - h\|_{\dot H^1}& = \|P\|_{\dot H^1} \lesssim t^{-5}, \label{eq:ansatz-err-1} \\
\|\partial_t g - \partial_t h\|_{\dot H^1} &= \|\partial_t P\|_{\dot H^1}\lesssim t^{-6}, \label{eq:ansatz-err-2}
\end{align}
and
\begin{equation}\label{eq:ansatz-err-3} 
\begin{aligned}
\Big\|\partial_t \dot g &- \Big\{\Delta h + f(\phi + h) - f(\phi)
+ \sum_k (\lambda_k' + b_k)b_k\lambda_k^{-2}\uln\Lambda_k\Lambda_k W_k\\
&+\sum_k \lambda_k^{-2}b_k (y_k'\cdot \grad_k) \Lambda_k W_k 
- \sum_k (b_k' - B_k(\bs\lambda))\lambda_k^{-1}\Lambda_k W_k\Big\}\Big\|_{L^2} \lesssim t^{-5}.
\end{aligned}
\end{equation}
\end{lemma}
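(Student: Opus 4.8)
The plan is to estimate the three expressions directly from the definition of $P$, using the decay bounds on $Q$, $S$ from Lemma~\ref{le:8} and the size estimates collected in~\eqref{tacit}. For~\eqref{eq:ansatz-err-1}, I would observe that each term $\chi((\cdot-z_k)/d)\lambda_k B_k(\bla)Q_k$ has $\dot H^1$-norm controlled by $|\lambda_k B_k(\bla)|\,\|Q_k\|_{\dot H^1}$ up to a harmless contribution from differentiating the cutoff (which is supported away from $z_k$, where $Q_k$ is small). Since $\|Q_k\|_{\dot H^1} = \|Q\|_{\dot H^1}$ by scaling, $|\lambda_k|\simeq t^{-2}$ and $|B_k(\bla)|\lesssim t^{-4}$ from~\eqref{tacit}, this term is $\lesssim t^{-6}$; similarly $b_k^2 S_k$ contributes $|b_k|^2\|S\|_{\dot H^1}\lesssim t^{-6}$. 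Actually the stated bound is only $t^{-5}$, so this is comfortable. (One must be slightly careful: $Q$ and $S$ are only bounded by $|x|^{-1}$, so they are \emph{not} in $L^2(\bR^5)$, but $\nabla Q, \nabla S$ decay like $|x|^{-2}$ and hence \emph{are} in $L^2$; the cutoff $\chi((\cdot-z_k)/d)$ serves precisely to truncate the slowly decaying tail, and the error from $\nabla\chi$ times $Q_k$ is supported in an annulus at distance $\sim d$, where in rescaled variables $|Q_k|\lesssim \lambda_k^{1/2}$, giving an extra power of $\lambda_k$.)

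For~\eqref{eq:ansatz-err-2}, I would compute $\partial_t P$ by the chain rule, so that each summand of $P$ produces terms where $\partial_t$ hits the coefficient $\lambda_k B_k(\bla)$ or $b_k^2$, or the scaling/translation parameters inside $Q_k$, $S_k$. For the coefficient terms one uses $|\tfrac{\ud}{\ud t}(\lambda_k B_k(\bla))|\lesssim |\lambda_k'||B_k| + |\lambda_k||\tfrac{\ud}{\ud t}B_k(\bla)| \lesssim t^{-3}\cdot t^{-4} + t^{-2}\cdot t^{-4}\lesssim t^{-6}$ and $|\tfrac{\ud}{\ud t}b_k^2| = 2|b_k||b_k'|\lesssim t^{-3}t^{-4} = t^{-7}$, both from~\eqref{tacit}. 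For the terms where $\partial_t$ hits $Q_k$ or $S_k$ through $(\lambda_k,y_k)$, one picks up $|\lambda_k'/\lambda_k|\lesssim t^{-1}$ or $|y_k'/\lambda_k|\lesssim t^{-5/3}$ times a quantity of the same size as in~\eqref{eq:ansatz-err-1}, namely $\lesssim t^{-6}$, so these are $\lesssim t^{-7}$ and $\lesssim t^{-23/3}$, all dominated by $t^{-6}$.

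For~\eqref{eq:ansatz-err-3}, the strategy is to start from the evolution equation~\eqref{eq:dtg-2} for $\partial_t\dot g$, substitute $g = h + P$, and \emph{cancel} the leading-order part of $\Delta P$ against the nonlinear interaction terms using the defining equations $LQ = \tfrac{105\pi}{128}f'(W) + \Lambda W$ and $LS = \uln\Lambda\Lambda W$ from Lemma~\ref{le:8}. Concretely: $\Delta h = \Delta g - \Delta P$, and $\Delta P$ decomposes (modulo cutoff errors and scaling factors) into $-\sum_k\lambda_k B_k(\bla)(LQ_k + f'(W_k)Q_k)\cdot(\text{rescaled})$ plus the $S_k$-analogue, so that $-\Delta P \approx \sum_k\lambda_k^{-1}B_k(\bla)\Lambda_kW_k + (\text{terms matching }\uln\Lambda_k\Lambda_kW_k\text{ and }f'(W_k)\text{-pieces})$; the $f'(W_k)$-pieces are designed to partially cancel the expansion of $f(\bW) - \sum_k f(W_k)$ near each $z_k$, as in the derivation of~\eqref{eq:b-mod}. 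The remaining discrepancies are: (a) cutoff errors from $\Delta$ hitting $\chi((\cdot-z_k)/d)$, which live in the annulus at distance $\sim d$ where $Q_k, S_k$ and their derivatives are $\lesssim \lambda_k^{1/2}$ in rescaled variables, hence $\lesssim \lambda_k\cdot(\text{coeff})$; (b) the quadratic-and-higher remainder in the Taylor expansion $f(\phi+h) - f(\phi) - f(\bW+g) + f(\bW) + \ldots$, controlled via~\eqref{taylor1}--\eqref{taylor2} and the smallness of $P$ from~\eqref{eq:ansatz-err-1}; and (c) the far-field mismatch between $W_{\lambda_j}(\cdot - z_j)$ and its constant approximation $15^{3/2}\lambda_j^{3/2}|z_j - z_k|^{-3}$, already quantified in the proof of~\eqref{eq:b-mod} at level $\lesssim t^{-8}$. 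Collecting, every error term is $\lesssim t^{-5}$ (the worst being the cutoff contribution $\lambda_k\cdot\lambda_k B_k \sim t^{-2}\cdot t^{-6}\cdot t^{?}$—to be checked—and the Taylor remainder $\bW^{1/3}|P|^2 + |P|^{7/3}$-type terms which, given $\|P\|_{\dot H^1}\lesssim t^{-5}$, are far smaller).

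The main obstacle I anticipate is bookkeeping in step~\eqref{eq:ansatz-err-3}: one must track precisely how $\Delta P$, after expanding $\Delta_k$ in rescaled variables, produces \emph{exactly} the terms $\sum_k(b_k' - B_k(\bla))\lambda_k^{-1}\Lambda_kW_k$ etc.\ that appear in the statement, and verify that the residual from the $f'(W_k)\sum_{j\neq k}W_j$ interaction (the term $I_6$ in the proof of~\eqref{eq:b-mod}) is correctly absorbed by the $\lambda_k B_k(\bla)Q_k$ piece of $P$ — this is the whole point of introducing the refined profile, and getting the constant $\tfrac{105\pi}{128}$ and the definition of $B_k$ in~\eqref{eq:Bk} to match requires care. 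The cutoff error analysis is routine but must be done; everything else follows from~\eqref{tacit}, Lemma~\ref{le:8}, and the Taylor estimates~\eqref{taylor1}--\eqref{taylor2}.
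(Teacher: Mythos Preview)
Your overall strategy for all three parts is the same as the paper's, and for \eqref{eq:ansatz-err-3} your outline --- reduce to estimating $\Delta P + f(\bW+P) - \sum_k f(W_k) - (\text{modulation terms})$ in $L^2$, then split into the region where $\chi\equiv 1$ (exact cancellation via Lemma~\ref{le:8}), the cutoff annulus, and the far field --- is exactly what the paper does.

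There is, however, a genuine error in your treatment of \eqref{eq:ansatz-err-1} and \eqref{eq:ansatz-err-2}. You claim that $\nabla Q$, decaying like $|x|^{-2}$, belongs to $L^2(\bR^5)$, so that $\|Q_k\|_{\dot H^1}=\|Q\|_{\dot H^1}$ is a fixed constant. This is false in dimension $5$: one has $\int_1^\infty r^{-4}\,r^4\,\ud r=\infty$, so $\nabla Q\notin L^2(\bR^5)$. The cutoff is therefore not merely cosmetic; it is what makes the $\dot H^1$-norm finite, and the truncated norm \emph{grows} with $t$. The paper computes
\[
\|\nabla Q_{\lambda_k}\|_{L^2(|x|\leq d)}\lesssim\Big(\int_0^{d/\lambda_k}r^{-4}r^4\,\ud r\Big)^{1/2}\lesssim \lambda_k^{-1/2}\lesssim t,
\]
and similarly $\|Q_{\lambda_k}\|_{L^2(|x|\leq d)}\lesssim t$ for the cutoff-derivative contribution. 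Combined with the coefficient $|\lambda_k B_k(\bla)|+b_k^2\lesssim t^{-6}$, this gives exactly $t^{-5}$, not the $t^{-6}$ you claim. The bound in the lemma is therefore sharp, not ``comfortable''. The same correction propagates to \eqref{eq:ansatz-err-2}: when $\partial_t$ hits the coefficient one gets $t^{-7}\cdot t=t^{-6}$, and when it hits $Q_k$ through $\lambda_k$ one gets $|\lambda_k'/\lambda_k|\cdot t^{-6}\cdot t\lesssim t^{-1}\cdot t^{-5}=t^{-6}$, again tight. Your quoted bounds $t^{-6}$, $t^{-7}$, $t^{-23/3}$ for these pieces are all off by one power of $t$, and the argument only closes because the lemma asks for $t^{-5}$ and $t^{-6}$, not less.
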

\begin{proof}
In order to prove \eqref{eq:ansatz-err-1}, note first from \eqref{eq:y-boot} that $|x - y_k| \geq d$ implies $\chi((x-z_k)/d) = 0$,
and thus the Chain Rule yields
\begin{equation}
\|\chi((\cdot - z_k)/d)Q_k\|_{\dot H^1} \lesssim \|Q_{\lambda_k}\|_{L^2(|x| \leq d)}
+ \|\nabla Q_{\lambda_k}\|_{L^2(|x| \leq d)}.
\end{equation}
Using $\lambda_k\lesssim t^{-2}$, we have
\begin{equation}
\label{eq:ansatz-err-1-aux1}
\|Q_{\lambda_k}\|_{L^2(|x| \leq d)} \lesssim \lambda_k \Big(\int_0^{d/\lambda_k}r^{-2}r^4\ud r\Big)^\frac 12 
\lesssim \lambda_k^{-\frac 12} \lesssim t
\end{equation}
and
\begin{equation}
\label{eq:ansatz-err-1-aux2}
\|\nabla Q_{\lambda_k}\|_{L^2(|x| \leq d)} \lesssim \Big(\int_0^{d/\lambda_k}r^{-4}r^4\ud r\Big)^\frac 12
\lesssim \lambda_k^{-\frac 12} \lesssim t.
\end{equation}
Similar estimates involving $S_{\lambda_k}$ hold. Using also $|\lambda_kB_k(\bla)|+|b_k|^2\lesssim t^{-6}$, 
we have proved \eqref{eq:ansatz-err-1}.

In order to bound $\partial_t P$, we write
\begin{equation}
\partial_t (\lambda_k B_k(\bs \lambda) Q_k) = \Big(\dd t(\lambda_k B_k(\bs \lambda))\Big)Q_k
- B_k(\bs \lambda)(y_k'\cdot\grad_k) Q_k 
- B_k(\bs \lambda) \lambda_k' \Lambda_k Q_k.
\end{equation}
Note that the cut-off $\chi\big(\frac{\cdot - z_k}{d}\big)$ is independent of $t$.
For the first term on the right-hand side, the required bound follows from \eqref{eq:ansatz-err-1-aux1}
and $|\dd t(\lambda_k B_k(\bs \lambda))|\lesssim t^{-7}$.
For the second term, we use~\eqref{eq:ansatz-err-1-aux2} (for these terms, we get a stronger bound $\lesssim t^{-6-\frac 83}$).
Finally, the last term is similar to the first one.
Terms involving $S_k$ are bounded similarly.

In view of \eqref{eq:dtg-2}, the refined bound \eqref{eq:ansatz-err-3} is equivalent to
\begin{equation}
\label{eq:ansatz-err-3-0}
 \Big\|\Delta P + f\big(\bW + P\big) - \sum_k f(W_k) -\sum_k \Big(b_k^2\lambda_k^{-2}\uln\Lambda_k \Lambda_k W_k + B_k(\bs \lambda)\lambda_k^{-1}\Lambda_k W_k \Big)\Big\|_{L^2} \lesssim t^{-5}.
\end{equation}
First, consider the complement of the union of the balls $\cB(z_k, d/2)$. In this region all the terms which do not involve $P$ are controlled by $t^{-5}$ in $L^2$ norm (we call such terms negligible).
Indeed, this follows from estimates in \eqref{tacit} and 
\begin{equation}\label{etoile}\begin{aligned}
&\|f(W_k)\|_{L^2(|x-z_k| \geq d/2)} = \lambda_k^{-1}\|f(W)\|_{L^2(|x| \geq d/(2\lambda_k))} \lesssim \lambda_k^{-1}\Big(\int_{d/(2\lambda_k)}^\infty r^{-14}r^4\ud r\Big)^\frac 12 \lesssim \lambda_k^\frac 72, \\
&\|\lambda_k^{-1}\uln \Lambda_k\Lambda_k W_k\|_{L^2(|x-z_k| \geq d/2)} = \|\uln \Lambda\Lambda W\|_{L^2(|x| \geq d/(2\lambda_k))} \lesssim \lambda_k^\frac 12,\\
&\|\lambda_k^{-1}\Lambda_k W_k\|_{L^2(|x-z_k| \geq d/2)} = \|\Lambda W\|_{L^2(|x| \geq d/(2\lambda_k))} \lesssim \lambda_k^\frac 12.
\end{aligned}\end{equation}

Now fix $k \in \{1, \ldots, K\}$ and consider the ball $\cB(z_k, d)$.
We have just seen that in the sum $\sum_{j=1}^K f(W_j)$ only $j = k$ is significant.
Next, we will prove that
\begin{equation}
\label{eq:ansatz-err-3-1}
\Big\|f\big(\bW + P\big) - f(W_k) - f'(W_k) \sum_{j\neq k}W_j - f'(W_k)P\Big\|_{L^2(\cB(z_k, d))} \lesssim t^{-5}.
\end{equation}
Note that in $\cB(z_k, d)$ we have $W_k \gtrsim t^{-3}$, whereas for $j \neq k$ we have $W_j \lesssim t^{-3}$ and $|P| \lesssim t^{-3}$.
From~\eqref{taylor1}, we have
\begin{equation}
|f(u + v) - f(u) - f'(u)v| \lesssim |u|^\frac 13|v|^2 + |v|^\frac{7}{3}.
\end{equation}
Applying this estimate to $u = W_k$ and $v = \sum_{j\neq k}W_j + P$,
so that $|u|^\frac 13|v|^2 + |v|^\frac{7}{3} \lesssim t^{-5}$, and 
integrating over the ball $\cB(z_k, d)$ we get \eqref{eq:ansatz-err-3-1}.

Next, we show that for all $j \neq k$ we have
\begin{equation}
\label{eq:ansatz-err-3-2}
\|f'(W_k)W_j - (15)^\frac 32 \lambda_j^\frac 32 |z_k - z_j|^{-3}f'(W_k)\|_{L^2(\cB(z_k, d))} \lesssim t^{-5}.
\end{equation}
We consider separately $x \in \cB(y_k, \sqrt{\lambda_k})$ and $x \notin \cB(y_k, \sqrt{\lambda_k})$.
In the first case, \eqref{eq:y-boot} yields $|x - z_k| \lesssim t^{-1}$, which implies
\begin{equation}
\Big| \frac{W_j(x)}{W_j(z_k)} - 1 \Big| \lesssim t^{-1}\quad\mbox{and so}\quad W_j(x) = (15)^\frac 32 \lambda_j^\frac 32 |z_k - z_j|^{-3} + O(t^{-4}).
\end{equation}
Since $\|f'(W_k)\|_{L^2} \lesssim \lambda_k^\frac 12 \lesssim t^{-1}$, \eqref{eq:ansatz-err-3-2} is proved for the region $\cB(y_k, \sqrt{\lambda_k})$.

Consider now the region $\cB(z_k, d) \setminus \cB(y_k, \sqrt{\lambda_k})$. We have
\begin{equation}
\|f'(W_k)\|_{L^2(|x -y_k| \geq \sqrt{\lambda_k})} = \sqrt{\lambda_k}\|f'(W)\|_{L^2(|x| \geq \lambda_k^{-1/2})}
\lesssim t^{-1}\Big(\int_{1/\sqrt{\lambda_k}}^\infty r^{-8}r^4\ud r\Big)^\frac 12 \lesssim t^{-1}\lambda_k^\frac 34 \ll t^{-2}.
\end{equation}
Since in $\cB(z_k, d)$ we have $W_j \lesssim t^{-3}$, the proof of \eqref{eq:ansatz-err-3-1} is complete.
Recalling the definition of $B_k(\bs\lambda)$ from~\eqref{eq:Bk}, 
estimate~\eqref{eq:ansatz-err-3-2} can be rewritten as
\begin{equation}
\Big\|f'(W_k)W_j + \frac{105\pi}{128}B_k(\bs\lambda)\lambda_k^{-\frac 12}f'(W_k)\Big\|_{L^2(\cB(z_k, d))} \lesssim t^{-5}.
\end{equation}

Resuming, we have reduced the proof of \eqref{eq:ansatz-err-3} to showing that
\begin{equation}
\label{eq:ansatz-err-3-3}
\begin{aligned}
\Big\| \Delta P + f'(W_k)P -B_k(\bs\lambda)\Big( \frac{105\pi}{128}\lambda_k^{-\frac 12}f'(W_k) +\lambda_k^{-1}\Lambda_k W_k\Big)
 -b_k^2\lambda_k^{-2}\uln\Lambda_k\Lambda_k W_k \Big\|_{L^2(\cB(z_k, d))} \lesssim t^{-5}.
\end{aligned}
\end{equation}
In the region $|x - z_k| \leq \frac d2$, it holds $\chi(\frac{x-z_k}{d})=1$ and $\chi(\frac{x-z_j}{d})=0$
for $j\neq k$; thus the above expression equals $0$ from the definition of $P$
and Lemma~\ref{le:8}.
It remains to show that for the cut-off region $\frac d2 \leq |x - z_k| \leq d$, this term is indeed negligible.
By the estimates \eqref{etoile} dealing with the exterior of the balls $\cB(z_k, d/2)$,
the terms in \eqref{eq:ansatz-err-3-3} not involving $P$ are negligible in this region.
Thus it sufficient to show that
\begin{equation}
\| |\Delta Q_k| + |\grad Q_k| + |Q_k|+ f'(W_k)|Q_k|\|_{L^2(\frac d2 \leq |x - z_k| \leq d)} \lesssim t
\end{equation}
(the terms involving $S_{\lambda_k}$ being bounded analogously). For the four terms above, the inequalities 
\begin{gather}
t^2\Big(\int_{d/(2\lambda_k)}^\infty r^{-6}r^4\ud r\Big)^\frac 12 \lesssim t, \quad
\Big(\int_0^{d/\lambda_k}r^{-4}r^4\ud r\Big)^\frac 12 \lesssim t, \\
t^{-2}\Big(\int_0^{d/\lambda_k}r^{-2}r^4\ud r\Big)^\frac 12 \lesssim t, \quad
t^2\Big(\int_{d/(2\lambda_k)}^\infty (r^{-4}r^{-1})^2 r^4\ud r\Big)^\frac 12 \ll t,
\end{gather}
 provide the desired estimate.
\end{proof}
\subsection{Energy estimates}\label{ssec:3.3}
\begin{lemma}\label{le:a}
Let any $\epsilon>0$ and $R>0$. There exists a radially symmetric function $q=q_{\epsilon,R}\in \cC^{3,1}(\bR^5)$ with the following properties
\begin{enumerate}[label=\emph{(\roman*)}]
\item $q(x)=\frac 12 |x|^2$ for $|x|\leq R$.
\item There exists $\tilde R$ (depending on $\epsilon$ and $R$) such that $q$ is constant for $|x|\geq \tilde R$.
\item $|\nabla q(x)|\lesssim |x|$ and $|\Delta q(x)|\lesssim 1$ for all $x\in \bR^5$, with constants independent of $\epsilon$ and $R$.
\item $\sum_{1\leq j,l\leq 5} (\partial_{x_jx_l} q(x)) v_j v_l \geq -\epsilon \sum_{j=1}^5 |v_j|^2$, for all $x\in \bR^5$, $v\in \bR^5$.
\item $\Delta^2 q(x)\leq \epsilon|x|^{-2}$, for all $x\in \bR^5$.
\end{enumerate}
\end{lemma}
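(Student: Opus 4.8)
The plan is to use radial symmetry to reduce (i)--(v) to one-dimensional conditions, and then to realize $q$ through a cut-off that acts on the \emph{logarithmic} scale of $|x|$; this is what simultaneously yields the $\epsilon$-independent bound in (iii) and the smallness in (iv)--(v).

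First I would write $q(x)=\varphi(|x|)$ for a function $\varphi:[0,\infty)\to\bR$ to be constructed. For a radial function on $\bR^5$, the Hessian at $x\neq0$ has eigenvalues $\varphi''(|x|)$ in the radial direction and $\varphi'(|x|)/|x|$ in the tangential directions (with multiplicity $4$), while $\Delta q=\varphi''+\tfrac4r\varphi'$ and $\Delta^2 q=(\Delta q)''+\tfrac4r(\Delta q)'$ with $r=|x|$. Thus (i)--(v) translate into, respectively: $\varphi(r)=\tfrac12r^2$ on $[0,R]$; $\varphi$ constant for $r$ large; $|\varphi'(r)|\lesssim r$ and $|\varphi''+\tfrac4r\varphi'|\lesssim1$; $\varphi''(r)\geq-\epsilon$ and $\varphi'(r)/r\geq-\epsilon$; and $(\varphi''+\tfrac4r\varphi')''+\tfrac4r(\varphi''+\tfrac4r\varphi')'\leq\epsilon r^{-2}$.

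Next I would set $\varphi'(r)=r\,\psi(r)$, that is $\varphi(r)=\int_0^r\rho\,\psi(\rho)\ud\rho$, where $\psi$ decreases from the constant $1$ on $[0,R]$ to $0$ for $r$ large. Then $\varphi''=\psi+r\psi'$, $\Delta q=5\psi+r\psi'$, and $\Delta^2 q$ is a fixed linear combination of $\psi''$, $r\psi'''$ and $r^{-1}\psi'$. A naive (linear-scale) truncation of $\tfrac12|x|^2$ would give $r\psi'$ of size $\sim1$, which violates (iv) for small $\epsilon$ and makes $\Delta^2 q\sim r^{-2}$, violating (v); one must instead let $\psi$ vary on a logarithmic scale of length $\sim\epsilon^{-1}$. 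So I would fix once and for all a smooth nonincreasing profile $\eta:[0,\infty)\to[0,1]$ with $\eta\equiv1$ near $0$ and $\eta\equiv0$ on $[1,\infty)$, assume without loss of generality $\epsilon\leq1$ (for $\epsilon\geq1$ use the $\epsilon=1$ construction), pick a small absolute constant $c>0$, and put
\[
\psi(r):=1\ \text{ for }r\leq R,\qquad \psi(r):=\eta\!\bigl(c\epsilon\log(r/R)\bigr)\ \text{ for }r\geq R,\qquad \tilde R:=R\,\eee^{1/(c\epsilon)}.
\]
Since $\eta\equiv1$ near $0$, $\psi$ is $\equiv1$ on a neighborhood of $[0,R]$ (so the paste at $r=R$ is trivial) and $\psi\equiv0$ on $[\tilde R,\infty)$; hence $q=\varphi(|\cdot|)$ is smooth, equals $\tfrac12|x|^2$ near the origin, and is constant outside the ball of radius $\tilde R$, so in particular $q\in\cC^{3,1}(\bR^5)$, and (i)--(ii) hold.

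Finally I would check the quantitative estimates. From $0\leq\psi\leq1$ one gets $|\nabla q(x)|=|x|\,\psi(|x|)\leq|x|$, which is half of (iii) with absolute constant. A chain-rule computation with $s=c\epsilon\log(r/R)$ (so $\tfrac{\vd s}{\vd r}=\tfrac{c\epsilon}{r}$) gives $|r^{\,j}\psi^{(j)}(r)|\lesssim c\epsilon$ for $j=1,2,3$, with implicit constants depending only on $\|\eta\|_{\cC^3}$; therefore $\Delta q=5\psi+r\psi'$ is bounded by a constant independent of $\epsilon$ and $R$ (second half of (iii)), $\varphi''(r)=\psi(r)+r\psi'(r)\geq-O(c\epsilon)\geq-\epsilon$ while $\varphi'(r)/r=\psi(r)\geq0$ (condition (iv)), and $\Delta^2 q=O(c\epsilon)\,r^{-2}$ on $\{R\leq|x|\leq\tilde R\}$ with $\Delta^2 q=0$ elsewhere (condition (v)). Choosing $c$ small enough, depending only on $\|\eta\|_{\cC^3}$, converts each ``$O(c\epsilon)$'' into the required ``$\leq\epsilon$'' (resp.\ into a bound ``$\lesssim1$'' with absolute constant). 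The only step that is not routine is recognizing that the logarithmic transition scale is forced by (iv)--(v) while (iii) must remain $\epsilon$-uniform --- this is also the source of the (harmless) exponentially large $\tilde R$; once this is seen, the remainder is bookkeeping of the chain-rule constants.
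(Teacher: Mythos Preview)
Your construction is correct. The paper itself does not give a proof of this lemma; it simply states that ``such a function is constructed in Lemma~4.5 of \cite{JJnls} for dimensions $N\geq 6$, and the construction for $N=5$ follows from arguments in \cite{JJjfa} and \cite{JJnls}.'' Your logarithmic-scale cutoff $\psi(r)=\eta(c\epsilon\log(r/R))$ is precisely the device used in those references, so you have essentially reproduced the cited construction with full details; there is no meaningful difference in approach to report.
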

Such a function is constructed in Lemma 4.5 of \cite{JJnls} for dimensions $N\geq 6$, and the construction for $N=5$ follows from arguments in \cite{JJjfa} and \cite{JJnls}.

Fix a function $q$ as in Lemma~\ref{le:a} and define the operators
\begin{align}
[ A_k h](x)&= \frac {3}{10}\frac 1{\lambda_k} \Delta q\Big(\frac{x-y_k}{\lambda_k}\Big) h(x)
+\nabla q \Big(\frac{x-y_k}{\lambda_k}\Big) \cdot \nabla h(x)
,\\
[\underline A_k h](x)&= \frac 12\frac1{\lambda_k} \Delta q\Big(\frac{x-y_k}{\lambda_k}\Big) h(x)
+\nabla q \Big(\frac{x-y_k}{\lambda_k}\Big) \cdot \nabla h(x).
\end{align}
\begin{lemma}\label{le:Ak}
For any $k=1,\ldots,K$, the operators $A_k$ and $\underline A_k$ satisfy the following properties.
\begin{enumerate}[label=\emph{(\roman*)}]
\item The families $\{A_k;\lambda_k>0,y_k\in \bR^5\}$, $\{\underline A_k;\lambda_k>0,y_k\in \bR^5\}$, 
$\{\lambda_k \partial_{\lambda_k} A_k;\lambda_k>0,y_k\in \bR^5\}$, $\{\lambda_k \partial_{\lambda_k}\underline A_k;\lambda_k>0,y_k\in \bR^5\}$, $\{\lambda_k \partial_{y_k} A_k;\lambda_k>0,y_k\in \bR^5\}$ and $\{\lambda_k \partial_{y_k}\underline A_k;\lambda_k>0,y_k\in \bR^5\}$ are bounded in $\mathcal L(\dot H^1,L^2)$,
with norms depending on $q$.
\item For any $g,h\in \dot H^1\cap \dot H^2$,
\[
\la A_k h,f(h+g)-f(h)-f'(h)g\ra = -\la A_kg,f(h+g)-f(h)\ra.
\]
\item For any $\eta>0$, choosing $\epsilon>0$ small enough in Lemma~\ref{le:a}, it holds for all $g\in \dot H^1\cap \dot H^2$,
\[
\la \underline A_k g,\Delta g\ra
\leq \frac{\eta}{\lambda_k} \|g\|_{\dot H^1}^2-\frac 1{\lambda_k} \int_{|x-y_k|<R\lambda_k} |\nabla g(x)|^2\ud x.
\]
\end{enumerate}
\end{lemma}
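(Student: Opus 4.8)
The proof rests on two structural features of the weight $q$ from Lemma~\ref{le:a}. First, $q$ is constant for $|x|\geq\tilde R$, so $\nabla q$, the Hessian $D^2 q$, $\Delta q$ and $\Delta^2 q$ are all compactly supported and bounded (items (iii)--(v) of Lemma~\ref{le:a}); moreover, applying $\lambda_k\partial_{\lambda_k}$ or $\lambda_k\partial_{y_k}$ to a rescaled profile $\psi((\cdot-y_k)/\lambda_k)$ again yields a rescaled profile of the same form, with $\psi$ replaced by a compactly supported bounded combination of $\psi$ and its first derivatives (the extra factor $(\cdot-y_k)/\lambda_k$ being bounded on the support). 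Second, writing $\sigma_k:=\lambda_k^{-1}(\Delta q)((\cdot-y_k)/\lambda_k)$ and $\rho_k:=(\nabla q)((\cdot-y_k)/\lambda_k)$, one has $\div\rho_k=\sigma_k$ and $\partial_l\big((\partial_j q)((\cdot-y_k)/\lambda_k)\big)=\lambda_k^{-1}(\partial_{jl}q)((\cdot-y_k)/\lambda_k)$, and $A_k h=\tfrac3{10}\sigma_k h+\rho_k\cdot\nabla h$, $\underline A_k h=\tfrac12\sigma_k h+\rho_k\cdot\nabla h$. For item (i), the transport term $\rho_k\cdot\nabla h$ is bounded in $L^2$ by $\|\rho_k\|_{L^\infty}\|\nabla h\|_{L^2}$, with $\|\rho_k\|_{L^\infty}\lesssim 1$ (depending on $q$ through $\tilde R$). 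The zeroth order term $\sigma_k h$ is supported in $\{|x-y_k|\leq\tilde R\lambda_k\}$ with $|\sigma_k|\lesssim\lambda_k^{-1}$, so by Hölder's inequality ($\tfrac12=\tfrac15+\tfrac3{10}$) on that ball, together with $\|1\|_{L^5(\{|x-y_k|\leq\tilde R\lambda_k\})}\sim\lambda_k$ and the Sobolev embedding, $\|\sigma_k h\|_{L^2}\lesssim\lambda_k^{-1}\lambda_k\|h\|_{L^{\frac{10}{3}}}\lesssim\|\nabla h\|_{L^2}$. The same two estimates apply verbatim to $\underline A_k$ and, by the first structural remark, to $\lambda_k\partial_{\lambda_k}A_k$, $\lambda_k\partial_{y_k}A_k$ and their $\underline A_k$-analogues; this proves (i) with constants depending on $q$.

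For item (ii), the point is the algebraic identity $\la A_k w,f(w)\ra=0$ for every $w\in\dot H^1\cap\dot H^2$. Indeed $(\rho_k\cdot\nabla w)f(w)=\rho_k\cdot\nabla\big(F(w)\big)$, so integrating by parts and using $\div\rho_k=\sigma_k$ together with the pointwise identity $wf(w)=\tfrac{10}{3}F(w)$ (valid since $F(u)=\tfrac3{10}|u|^{10/3}$), we get $\la A_k w,f(w)\ra=\tfrac3{10}\int\sigma_k wf(w)\ud x-\int\sigma_k F(w)\ud x=\int\sigma_k F(w)\ud x-\int\sigma_k F(w)\ud x=0$; this is exactly where the coefficient $\tfrac3{10}$ in $A_k$ is used. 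Since $A_k$ is linear, applying this to $w\in\{h,g,h+g\}$ gives $\la A_k h,f(h)\ra=0$ and $\la A_k h,f(h+g)\ra+\la A_k g,f(h+g)\ra=0$, while the first variation of the identity at $w=h$ in direction $g$ (equivalently, a direct computation using $wf'(w)=\tfrac73 f(w)$) gives $\la A_k h,f'(h)g\ra+\la A_k g,f(h)\ra=0$. Combining these three relations,
\[
\la A_k h,f(h+g)-f(h)-f'(h)g\ra=\la A_k h,f(h+g)\ra+\la A_k g,f(h)\ra=-\la A_k g,f(h+g)-f(h)\ra,
\]
which is (ii).

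For item (iii), I run a virial-type computation. Using $\div\rho_k=\sigma_k$, the identity $\partial_l\big((\partial_j q)((\cdot-y_k)/\lambda_k)\big)=\lambda_k^{-1}(\partial_{jl}q)((\cdot-y_k)/\lambda_k)$, and a couple of integrations by parts, one obtains
\begin{equation*}
\la\underline A_k g,\Delta g\ra=\tfrac14\int_{\bR^5}(\Delta\sigma_k)\,g^2\ud x-\frac1{\lambda_k}\sum_{1\leq j,l\leq 5}\int_{\bR^5}(\partial_{jl}q)\Big(\tfrac{x-y_k}{\lambda_k}\Big)\,\partial_j g\,\partial_l g\ud x,
\end{equation*}
where the two contributions $\pm\tfrac12\int\sigma_k|\nabla g|^2$ produced along the way cancel exactly — and this cancellation is what fixes the coefficient $\tfrac12$ in $\underline A_k$. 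For the first term, $\Delta\sigma_k=\lambda_k^{-3}(\Delta^2 q)((\cdot-y_k)/\lambda_k)\leq\epsilon\lambda_k^{-1}|x-y_k|^{-2}$ by item (v) of Lemma~\ref{le:a}, so by Hardy's inequality on $\bR^5$ it is $\leq C\epsilon\lambda_k^{-1}\|\nabla g\|_{L^2}^2$. For the second term, on $\{|x-y_k|<R\lambda_k\}$ item (i) of Lemma~\ref{le:a} gives $(\partial_{jl}q)((x-y_k)/\lambda_k)=\delta_{jl}$, producing the good term $-\lambda_k^{-1}\int_{|x-y_k|<R\lambda_k}|\nabla g|^2\ud x$; on the complement, item (iv) of Lemma~\ref{le:a} applied with $v=\nabla g(x)$ gives $\sum_{j,l}(\partial_{jl}q)((x-y_k)/\lambda_k)\,\partial_j g\,\partial_l g\geq-\epsilon|\nabla g|^2$, contributing at most $\epsilon\lambda_k^{-1}\|\nabla g\|_{L^2}^2$. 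Collecting these bounds and choosing $\epsilon=\epsilon(\eta)$ small enough that the error constants sum to at most $\eta$ proves (iii).

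All the integrations by parts above are justified for $g,h\in\dot H^1\cap\dot H^2$ and the bounded, compactly supported (with bounded derivatives) symbols involved, by a routine density argument, so I would not dwell on it. The genuinely substantive point is the bookkeeping in item (iii): securing the exact cancellation of the $\int\sigma_k|\nabla g|^2$ terms, and then splitting the Hessian term inside versus outside the ball $\{|x-y_k|<R\lambda_k\}$ so that the unit ball contributes the \emph{negative} coercive term while the convexity bound (item (iv) of Lemma~\ref{le:a}) keeps the remainder of order $\epsilon\lambda_k^{-1}\|\nabla g\|_{L^2}^2$. By comparison, item (i) is a soft scaling estimate and item (ii) is a purely algebraic consequence of the identity $\la A_k w,f(w)\ra=0$.
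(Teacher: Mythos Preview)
Your proof is correct and in fact more self-contained than the paper's. The paper handles (i) by a scaling reduction: it introduces the unscaled operators $A,\underline A$, observes that $A:\dot H^1\to L^2$ is bounded because $\Delta q$ and $\nabla q$ are compactly supported, and then uses the identity $(A_k h_k)(x)=\lambda_k^{-5/2}(Ah)((x-y_k)/\lambda_k)$ together with $\|A_k h_k\|_{L^2}=\|Ah\|_{L^2}$, $\|h_k\|_{\dot H^1}=\|h\|_{\dot H^1}$ to conclude that $A_k$ has the same operator norm as $A$; this is slightly cleaner than your direct H\"older estimate at scale $\lambda_k$, though both arguments are equivalent. For (ii) and (iii) the paper simply cites \cite[Lemma~3.12]{JJwave} and invokes a change of variable, whereas you actually carry out the computations: the algebraic identity $\la A_k w,f(w)\ra=0$ for (ii), and the virial identity with the cancellation of the $\pm\tfrac12\int\sigma_k|\nabla g|^2$ terms followed by the Hardy and Hessian-convexity bounds for (iii). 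Your computations are correct and reproduce exactly what is in the cited reference, so your write-up has the advantage of being self-contained.
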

\begin{proof} (i) Denote
\[
Ah=\frac {3}{10} (\Delta q) h +\nabla q \cdot \nabla h,\quad
 \underline A h = \frac 12 (\Delta q) h +\nabla q \cdot \nabla h.
\]
Since the functions $\Delta q$ and $\nabla q$ have compact supports, it is clear that $A:\dot H^1\to L^2$ is a bounded operator.
For a function $h$, let $h_k(x)= \lambda_k^{-\frac{3}2}h\big(\frac{x-y_k}{\lambda_k}\big)$.
Note that $(A_k h_k)(x)=\lambda_k^{-\frac 52} (Ah)\big(\frac{x-y_k}{\lambda_k}\big)$.
Moreover, $\|A_kh_k\|_{L^2}=\|Ah\|_{L^2}$ and $\|h_k\|_{\dot H^1}=\|h\|_{\dot H^1}$.
Thus, $A_k:\dot H^1\to L^2$ is a bounded operator with the same norm as $A$. The same argument applies to $\underline A_k$ and $\underline A$.

We compute
\begin{align*}
\lambda_k \partial_{\lambda_k}\underline A_k&=-\frac1{2\lambda_k}\Delta q\Big(\frac{x-y_k}{\lambda_k}\Big)
-\frac{1}{2\lambda_k}\frac{x-y_k}{\lambda_k}\cdot \nabla \Delta q\Big(\frac{x-y_k}{\lambda_k}\Big)
-\frac{x-y_k}{\lambda_k}\cdot \nabla^2 q\Big(\frac{x-y_k}{\lambda_k}\Big)\cdot\nabla,\\
\lambda_k\partial_{y_k}\underline A_k&=-\frac1{2\lambda_k}\nabla \Delta q\Big(\frac{x-y_k}{\lambda_k}\Big)
- \nabla^2 q\Big(\frac{x-y_k}{\lambda_k}\Big)\cdot\nabla.
\end{align*}
Thus, the same arguments provide the desired results.

(ii) The relation $\la A h,f(h+g)-f(h)-f'(h)g\ra = -\la A g,f(h+g)-f(h)\ra$ is proved in \cite[Lemma 3.12]{JJwave}, and the relation for $A_k$ follows immediately by change of variable.

(iii) The estimate is proved for $\underline A$ in \cite[Lemma 3.12]{JJwave} and follows for $\underline A_k$ by change of variable.
\end{proof}

We establish energy estimates for the pair $(h, \dot g)$. We define
\begin{equation}
\cI := \int_{\bR^5}\Big\{\frac 12 (\dot g)^2 + \frac 12 |\grad h|^2 - \big(F(\phi + h) - F(\phi) - f(\phi)h\big)\Big\}\ud x,
\end{equation}
and 
\begin{equation}
\cJ_k := -b_k\la \dot g, \uln A_k h\ra.
\end{equation}
Set
\begin{equation}
\cH := \cI + \sum_k\cJ_k.
\end{equation}

\begin{lemma} For any $\delta>0$, choosing $\epsilon>0$ small enough in Lemma~\ref{le:Ak}, it holds
\begin{equation}\label{eq:energyle}
\cH' \geq - \delta t^{-\frac{25}3}.
\end{equation}
\end{lemma}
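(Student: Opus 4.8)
The plan is to differentiate $\cH = \cI + \sum_k \cJ_k$ in time and show that all ``bad'' terms in $\cI'$ are either nonnegative up to controlled errors, cancelled by $\sum_k \cJ_k'$, or bounded by $\delta t^{-25/3}$. First I would compute $\cI'$. Using the refined equation for $\partial_t\dot g$ from Lemma~\ref{lem:ansatz-err} (estimate \eqref{eq:ansatz-err-3}) and the equation for $\partial_t h = \partial_t g - \partial_t P$, with $\partial_t P$ controlled by \eqref{eq:ansatz-err-2}, the time derivative of $\cI$ produces: (a) a term $\la \dot g, \Delta h + f(\phi+h) - f(\phi)\ra$ coming from $\partial_t \dot g$, which integrates by parts against the $\frac12|\nabla h|^2$ and potential terms to leave only the contribution of $\partial_t\phi$ hitting $f(\phi+h) - f(\phi) - f'(\phi)h$ (a quadratic-in-$h$ error), plus (b) the ``modulation'' terms $\sum_k(\lambda_k'+b_k)b_k\lambda_k^{-2}\uln\Lambda_k\Lambda_k W_k$, $\sum_k\lambda_k^{-2}b_k(y_k'\cdot\nabla_k)\Lambda_k W_k$, and $-\sum_k(b_k' - B_k(\bla))\lambda_k^{-1}\Lambda_k W_k$ paired with $\dot g$. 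The key observation is that the last of these is $O(t^{-14/3})$ by \eqref{eq:b-mod} times $\|\dot g\|_{L^2}\lesssim t^{-11/3}$, giving $t^{-25/3}$, and similarly $\lambda_k'+b_k = O(t^{-11/3})$ by \eqref{eq:lambda-mod} and $y_k' = O(t^{-11/3})$ by \eqref{eq:y-mod}, so those terms are all $\lesssim t^{-25/3}$ or better. The quadratic error (a) is bounded by $\|\dot\phi\|\cdot\|h\|_{L^{10/3}}^2 \lesssim t^{-3}\cdot t^{-22/3}$, which is even smaller.

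Next I would compute $\cJ_k' = -b_k'\la\dot g,\uln A_k h\ra - b_k\la\partial_t\dot g,\uln A_k h\ra - b_k\la\dot g, (\partial_t\uln A_k)h\ra - b_k\la\dot g,\uln A_k\partial_t h\ra$. The term $-b_k\la\partial_t\dot g,\uln A_k h\ra$ is where the main structural cancellation happens: substituting $\partial_t\dot g \approx \Delta h + f(\phi+h) - f(\phi) + (\text{mod. terms})$ and using Lemma~\ref{le:Ak}(iii), $\la\uln A_k g, \Delta g\ra \leq \frac{\eta}{\lambda_k}\|g\|_{\dot H^1}^2 - \frac1{\lambda_k}\int_{|x-y_k|<R\lambda_k}|\nabla g|^2$, together with Lemma~\ref{le:Ak}(ii) to handle the nonlinear piece $\la\uln A_k h, f(\phi+h)-f(\phi)\ra$ (rewriting it via the adjoint identity — note $\uln A_k$ here plays the role of $A_k$ in the identity up to the difference between the $\frac3{10}$ and $\frac12$ coefficients, which I'd absorb into the lower-order analysis, since the relevant identity in \cite{JJwave} is stated for $A_k$ and the $\uln A_k$ version differs by a zeroth-order term controlled by Lemma~\ref{le:Ak}(i)). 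The factor $b_k \simeq t^{-3}$ multiplied against $\lambda_k^{-1}\simeq t^2$ and $\|h\|_{\dot H^1}^2\simeq t^{-22/3}$ gives a borderline $t^{-3}\cdot t^2\cdot t^{-22/3} = t^{-19/3}$ — wait, this is too large, so the point must be that the negative local-energy term $-\frac{b_k}{\lambda_k}\int_{|x-y_k|<R\lambda_k}|\nabla h|^2$ is genuinely used to \emph{absorb} a bad term of the same size coming from the potential $f'(\phi)$, which near the $k$th bubble behaves like $f'(W_k)$ and is not small; this is exactly the classical virial/localized-energy mechanism. I would match $\la\dot g, f'(\phi)h\ra$-type contributions (arising when $\partial_t\dot g$'s nonlinear part is linearized and when $\partial_t h$ hits $\uln A_k$) against this negative term, keeping $\eta$ small after fixing $R = R(\eta)$.

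The remaining pieces of $\cJ_k'$ are lower order: $-b_k'\la\dot g,\uln A_k h\ra \lesssim t^{-4}\cdot t^{-11/3}\cdot t^{-22/3}$ is tiny; $-b_k\la\dot g,(\partial_t\uln A_k)h\ra$ uses Lemma~\ref{le:Ak}(i) (the families $\lambda_k\partial_{\lambda_k}\uln A_k$ and $\lambda_k\partial_{y_k}\uln A_k$ are bounded on $\mathcal L(\dot H^1,L^2)$) so $\partial_t\uln A_k = \lambda_k'\partial_{\lambda_k}\uln A_k + y_k'\cdot\partial_{y_k}\uln A_k$ contributes $(|\lambda_k'|/\lambda_k + |y_k'|/\lambda_k)\cdot\|h\|_{\dot H^1}$, and $-b_k\la\dot g,\uln A_k\partial_t h\ra$ uses $\partial_t h = \dot g + (\text{mod. terms}) - \partial_t P$ where $\la\dot g,\uln A_k\dot g\ra$ needs integration by parts (the antisymmetric part of $\uln A_k$ after using $\div(\nabla q) = \Delta q$) — here property (iv) or simply the skew-adjoint structure shows $\la\dot g,\uln A_k\dot g\ra = (\tfrac12 - \tfrac1{2})\cdot(\ldots)$, i.e. with the $\frac12$ coefficient $\uln A_k$ is built precisely so that $\la v,\uln A_k v\ra$ has no bad sign, using $\Delta^2 q \leq \epsilon|x|^{-2}$ from Lemma~\ref{le:a}(v) and Hardy's inequality. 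I expect the main obstacle to be the bookkeeping of the borderline terms of order $t^{-19/3}$ versus the target $t^{-25/3}$: one must verify that \emph{every} such term either carries an extra smallness from the bootstrap (the gap between $\|h\|_{\dot H^1}\lesssim t^{-11/3}$ and what's merely needed), is a total derivative absorbed into redefining $\cH$, or is genuinely killed by the coercive local-energy term; getting all the exponents to close simultaneously, especially reconciling $b_k\lambda_k^{-1}\sim t^{-1}$ growth against the decay budget, is the delicate part and is exactly why the cross-terms $\cJ_k$ were added to $\cI$ in the first place.
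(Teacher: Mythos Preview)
Your overall architecture is right, but you miss the central point of the argument: the statement demands $\cH' \geq -\delta t^{-25/3}$ for \emph{any} $\delta>0$, so a bound ``$\lesssim t^{-25/3}$ or better'' with a fixed implied constant is not acceptable. In $\cI'$ there are exactly three terms of critical size $C t^{-25/3}$ with $C$ \emph{not small}:
\[
\sum_k(\lambda_k'+b_k)b_k\lambda_k^{-2}\la \uln\Lambda_k\Lambda_k W_k,\dot g\ra,\quad
\sum_k b_k\lambda_k^{-2}\la (y_k'\cdot\nabla_k)\Lambda_k W_k,\dot g\ra,\quad
-\sum_k b_k\lambda_k^{-1}\la \Lambda_k W_k, f(\phi+h)-f(\phi)-f'(\phi)h\ra.
\]
You correctly computed the first two as $t^{-25/3}$ but then dropped them as if that sufficed. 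You also misestimated the third: $\partial_t\phi \approx \sum_k b_k\lambda_k^{-1}\Lambda_k W_k$ has $\|\lambda_k^{-1}\Lambda_k W_k\|_{L^{10/3}}\sim \lambda_k^{-1}\sim t^2$, so the term is $t^{-3}\cdot t^2\cdot t^{-22/3}=t^{-25/3}$, not ``even smaller''. (The $(b_k'-B_k)\lambda_k^{-1}\Lambda_k W_k$ term, by contrast, is exactly zero by the orthogonality $\la\Lambda_k W_k,\dot g\ra=0$.) These three terms are precisely why $\cJ_k$ exists, and each must be cancelled \emph{exactly}, not merely bounded.

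The cancellation works as follows. In $-b_k\la\dot g,\uln A_k\partial_t h\ra$, substitute $\partial_t h\approx\partial_t g$ and use \eqref{eq:dtg-1}; since $\la\dot g,\uln A_k\dot g\ra=0$ (skew-adjointness) and $\uln A_k\Lambda_k W_k = \lambda_k^{-1}\uln\Lambda_k\Lambda_k W_k + O_{L^2}(\eta)$ for $R$ large (similarly $\uln A_k\nabla_k W_k\approx\lambda_k^{-1}\uln\Lambda_k\nabla_k W_k$), you recover the negatives of the first two critical terms up to $C\eta t^{-25/3}$. In $-b_k\la\partial_t\dot g,\uln A_k h\ra$, after applying Lemma~\ref{le:Ak}(iii) for the Laplacian part, the nonlinear piece is handled via Lemma~\ref{le:Ak}(ii), which converts it to $b_k\la A_k\phi, f(\phi+h)-f(\phi)-f'(\phi)h\ra$; since $\lambda_k\uln A_k\phi\approx\Lambda_k W_k$ (again up to $O(R^{-2}+t^{-1})$ in $L^{10/3}$), this cancels the third critical term. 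What remains in $\cJ_k'$ is $b_k\lambda_k^{-1}\int_{|x-y_k|<R\lambda_k}(|\nabla h|^2 - f'(W_k)h^2)$, which is bounded below by $-\eta\|\nabla h\|_{L^2}^2-Ct^{-8}$ via Lemma~\ref{le:coer} (using the $a_k^\pm$ bootstrap bounds for $\la\lambda_k^{-2}Y_k,h\ra$); note the potential term here arises \emph{within} $\cJ_k'$, not from $\cI'$.

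Finally, your arithmetic $t^{-3}\cdot t^2\cdot t^{-22/3}=t^{-19/3}$ is off: the correct value is $t^{-25/3}$, and with the $\eta$ prefactor from Lemma~\ref{le:Ak}(iii) this is exactly the $C\eta t^{-25/3}$ contribution that drives the choice of small $\epsilon$.
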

\begin{proof}
In this proof, the sign ``$\simeq$'' means that equality holds up to error terms of order $t^{-\frac{26}{3}}$.
We call such error terms ``negligible''.

We start by computing $\cI'$. We have
by integration by parts,
\begin{equation}
\label{eq:dtI-0}
\cI' = \la \partial_t \dot g, \dot g\ra - \la \partial_t h, \Delta h + f(\phi + h) - f(\phi)\ra - \la \partial_t \phi, f(\phi+h) - f(\phi) - f'(\phi)h\ra.
\end{equation}

By \eqref{eq:ansatz-err-3} in Lemma~\ref{lem:ansatz-err}, the third orthogonality condition in \eqref{eq:ortho} and \eqref{eq:g-boot}, we have
\begin{equation}
\label{eq:dtgdot}\begin{aligned}
\la \partial_t \dot g, \dot g\ra & \simeq \la \dot g, \Delta h + f(\phi + h) - f(\phi)\ra 
\\& \quad + \sum_k(\lambda_k' + b_k)b_k \lambda_k^{-2}\la \uln\Lambda_k \Lambda_k W_k, \dot g\ra
 +\sum_k b_k\lambda_k^{-2}\la (y_k'\cdot\grad_k) \Lambda_k W_k, \dot g\ra.
\end{aligned}\end{equation}
Moreover, by \eqref{eq:ansatz-err-1}-\eqref{eq:ansatz-err-2} in Lemma~\ref{lem:ansatz-err} and \eqref{eq:g-boot},
\begin{equation}
\label{eq:dtgradh}
\la \partial_t h, \Delta h + f(\phi + h) - f(\phi)\ra \simeq \la \partial_t g, \Delta h + f(\phi + h) - f(\phi)\ra.
\end{equation}
Now, we claim that
\begin{equation}
\label{eq:dtgraph}
\la \partial_t h, \Delta h + f(\phi + h) - f(\phi)\ra \simeq \la \dot g, \Delta h + f(\phi + h) - f(\phi)\ra.
\end{equation}
Note that \eqref{eq:dtg-1} and \eqref{eq:lambda-mod}-\eqref{eq:y-mod} imply
\begin{equation}\label{simple}
\|\partial_t g - \dot g\|_{\dot H^1} \lesssim t^{-\frac 53}.
\end{equation}
Since 
\begin{equation}\label{simple2}
\|f(\phi + h) - f(\phi) - f'(\phi)h\|_{\dot H^{-1}}\lesssim 
\|f(\phi + h) - f(\phi) - f'(\phi)h\|_{L^{\frac{10}7}}\lesssim \|h\|_{\dot H^1}^2 \lesssim t^{-\frac{22}{3}}
\end{equation}
(the last bound follows from \eqref{eq:g-boot} and~\eqref{eq:ansatz-err-1}), we have
\begin{equation}
\begin{aligned}
\la \partial_t g, \Delta h + f(\phi+h) - f(\phi)\ra& = \la \dot g, \Delta h + f(\phi+h) - f(\phi)\ra
+ \la \partial_t g - \dot g, \Delta h + f(\phi+h) - f(\phi)\ra \\
&\simeq \la \dot g, \Delta h + f(\phi+h) - f(\phi)\ra + \la \partial_t g - \dot g, \Delta h + f'(\phi)h\ra.
\end{aligned}
\end{equation}
Now, we check that the last term is negligible.
Fix $k \in \{1, \ldots, K\}$. Using \eqref{eq:dtg-1}, then \eqref{eq:lambda-mod}-\eqref{eq:y-mod}
and the cancellations
$L \Lambda W=0$, $L\nabla W=0$, it is sufficient to prove that
\begin{gather}
\big|\la \Lambda_k W_k, \Delta h + f'(\phi)h\ra\big| = \big|\la \Lambda_k W_k, (f'(\phi) - f'(W_k))h\ra\big| \lesssim t^{-7}, \\
\big|\la \grad_k W_k, \Delta h + f'(\phi)h\ra\big| = \big|\la \grad_k W_k, (f'(\phi) - f'(W_k))h\ra\big| \lesssim t^{-7}.
\end{gather}
Both inequalities will follow from
\begin{equation}
\label{eq:dtgradh-1}
\int_{\bR^5} W_k |f'(\phi) - f'(W_k)||h|\ud x \lesssim t^{-7}.
\end{equation}
In the exterior of all the balls $\cB(z_j, d)$ we have
\begin{equation}
W_k|f'(\phi) - f'(W_k)| \lesssim \sum_j f(W_j)
\end{equation}
and
\begin{equation}
\|f(W_j)\|_{L^{\frac{10}{7}}(|x - z_j| \geq d)} \lesssim \Big(\int_{d/(2\lambda_j)}^\infty r^{-10}r^4\ud r\Big)^\frac{7}{10}
\lesssim \lambda_j^\frac 72 \lesssim t^{-7},
\end{equation}
which yields an estimate better than~\eqref{eq:dtgradh-1} for this region. In the ball $\cB(z_j, d)$ for $j \neq k$ we have
\begin{equation}
|f'(\phi) - f'(W_k)| \lesssim f'(W_j) + f'(P).
\end{equation}
Note that $\|f'(W_j)\|_{L^2} = \sqrt{\lambda_j} \|f'(W)\|_{L^2} \lesssim t^{-1}$. Also, since $\|P\|_{L^\infty} \lesssim t^{-3}$, we obtain
\begin{equation}
\|f'(\phi) - f'(W_k)\|_{L^2(\cB(z_j, d))} \lesssim t^{-1},
\end{equation}
hence H\"older inequality yields
\begin{equation}
\int_{\cB(z_j, d)}W_k |f'(\phi) - f'(W_k)||h|\ud x \lesssim t^{-1}\|W_k\|_{L^5(\cB(z_j, d))}\|h\|_{L^{\frac{10}{3}}(\cB(z_j, d))} \lesssim t^{-4 - \frac{11}{3}} \ll t^{-7},
\end{equation}
which proves \eqref{eq:dtgradh-1} in the ball $\cB(z_j, d)$. In $\cB(z_k, d)$ we write
\begin{equation}
|f'(\phi) - f'(W_k)| \lesssim |f''(W_k)|\Big(|P| + \sum_{j \neq k} W_j \Big) + f'(P) + \sum_{j \neq k}f'(W_j),
\end{equation}
so that in particular
\begin{equation}
W_k|f'(\phi) - f'(W_k)| \lesssim t^{-3}(W_k + f'(W_k)).
\end{equation}
We have $\|f'(W_k)\|_{L^{\frac{10}{7}}} = \lambda_k^{\frac 32} \|f'(W)\|_{L^\frac{10}{7}} \lesssim t^{-3}$
and
\begin{equation}
\|W_k\|_{L^\frac{10}{7}(\cB(z_k, d))} \lesssim \lambda_k^2 \Big(\int_0^{2d/\lambda_k}r^{-\frac{30}{7}}r^4\ud r\Big)^\frac{7}{10}
\lesssim t^{-3},
\end{equation}
hence we obtain by H\"older inequality
\begin{equation}
\int_{\cB(z_k, d)}W_k |f'(\phi) - f'(W_k)||h|\ud x \lesssim t^{-6}t^{-\frac{11}{3}} \ll t^{-7}.
\end{equation}
This finishes the proof of \eqref{eq:dtgradh-1}, which means we have proved \eqref{eq:dtgraph}.

Next, we consider the last term in \eqref{eq:dtI-0}.
Since $\partial_t \phi = \partial_t u - \partial_t h
=\sum_k b_k \lambda_k^{-1}\Lambda_k W_k + \dot g-\partial_t h$, estimates~\eqref{eq:ansatz-err-2} and~\eqref{simple} implies that
\begin{equation}
\Big\|\partial_t \phi - \sum_k b_k \lambda_k^{-1}\Lambda_k W_k\Big\|_{\dot H^1} \lesssim t^{-\frac 53}.
\end{equation}
Thus, using also \eqref{simple2},
\begin{equation}
\la \partial_t \phi, f(\phi + h) - f(\phi) - f'(\phi)h\ra \simeq \sum_k b_k\lambda_k^{-1}\la \Lambda W_k, f(\phi + h) - f(\phi) - f'(\phi)h\ra.
\end{equation}
We conclude that
\begin{equation}\label{surdI}\begin{aligned}
\cI'& \simeq \sum_k(\lambda_k' + b_k)b_k \lambda_k^{-2}\la \uln\Lambda_k \Lambda_k W_k, \dot g\ra
 +\sum_k b_k\lambda_k^{-2}\la (y_k'\cdot\grad_k) \Lambda_k W_k, \dot g\ra 
 \\ & \quad -\sum_k b_k\lambda_k^{-1}\la \Lambda_k W_k, f(\phi + h) - f(\phi) - f'(\phi)h\ra.
\end{aligned}\end{equation}
These remaining terms can only be estimated by $C t^{-\frac {25}3}$, which is the critical size 
for the energy method. Thus, they have to be cancelled by similar terms coming from the virial correction~$\mathcal J$,
see below~\eqref{surdJ}.
The original idea of such a virial correction in a blow-up context is due to \cite{RS} for the mass critical nonlinear Schr\"odinger equation, and was extended to the energy-critical wave and Schr\"odinger equations in~\cite{JJjfa,JJnls}. The presentation here follows closely the one in~\cite{JJjfa,JJnls,JJwave}.

Let $\eta>0$ arbitrarily small. 
We compute $\cJ_k'$ from its definition
\begin{equation}\label{e:dJ}
\cJ_k' = -b_k'\la \dot g, \uln A_k h\ra
-b_k\lambda_k'\la \dot g, (\partial_{\lambda_k}\uln A_k) h\ra
-b_k\la \dot g,y_k'\cdot(\partial_{y_k} \uln A_k) h\ra
-b_k\la \dot g,\uln A_k \partial_th\ra
-b_k\la \partial_t \dot g, \uln A_k h\ra.
\end{equation}
First, by (i) of Lemma~\ref{le:Ak}, \eqref{eq:g-boot}, \eqref{tacit}
and \eqref{eq:ansatz-err-1}, we have
\[
|b_k'\la \dot g, \uln A_k h\ra|+|b_k\lambda_k'\la \dot g, (\partial_{\lambda_k}\uln A_k) h\ra|
+|b_ky_k'\la \dot g,(\partial_{y_k} \uln A_k) h\ra|
\lesssim t^{-4} \|\dot g\|_{L^2}\|h\|_{\dot H^1}
\lesssim t^{-\frac{28}3},
\]
Next, by (i) of Lemma~\ref{le:Ak}, \eqref{eq:g-boot} and \eqref{eq:ansatz-err-2}, we have
\[
|b_k \la \dot g ,\uln A_k (\partial_th-\partial_tg)\ra|
\lesssim t^{-3} \|\dot g\|_{L^2}\|\partial_th-\partial_tg\|_{\dot H^1}
\lesssim t^{-\frac{38}3},
\]
which implies $b_k \la \dot g ,\uln A_k \partial_th \ra\simeq b_k \la \dot g ,\uln A_k \partial_tg\ra$.
Using \eqref{eq:dtg-1} and $\la \dot g,\uln A_k \dot g\ra=0$ (by integration by parts), we have
\begin{equation*}
 \la \dot g ,\uln A_k \partial_tg\ra =
\sum_j \Big\{\lambda_j^{-1}(\lambda_j' + b_j)\la \dot g, \uln A_k \Lambda_j W_j\ra
+ \lambda_j^{-1} \la \dot g, \uln A_k (y_j' \cdot\nabla_j W_j)\ra\Big\}.
\end{equation*}
We first consider $j=k$ in the above sum.
We claim that for $R$ large enough in the choice of $q$ in Lemma~\ref{le:Ak}, it holds
\begin{equation}\label{e:AkLa}
\|\uln A_k \Lambda_k W_k-\lambda_k^{-1}\uln\Lambda_k\Lambda_kW_k\|_{L^2}+
\|\uln A_k \nabla_k W_k-\lambda_k^{-1}\uln\Lambda_k\nabla_kW_k\|_{L^2}\leq \eta.
\end{equation}
Indeed, for $|x|\leq R$, we have $\uln A\Lambda W(x)=\uln\Lambda\Lambda W(x)$, and for $|x|\geq R$, using (iii) of 
Lemma~\ref{le:Ak} and the decay of $W$, we have $|\uln A\Lambda W(x)|+|\uln \Lambda\Lambda W(x)|\leq C |x|^{-3}$.
Thus, $\|\uln A \Lambda W-\uln \Lambda\Lambda W\|_{L^2}\leq C R^{-\frac 12}$, and estimate \eqref{e:AkLa} for
$\Lambda_k W_k$ follows by change of variable. The estimate on $\nabla_k W_k$ is proved similarly.
For $j\neq k$, one checks that $\|\uln A_k \Lambda_j W_j\|_{L^2}+\|\uln A_k \nabla_j W_j\|_{L^2}\lesssim t^{-1}$.

Using also $\uln\Lambda\nabla=\nabla\Lambda$, it follows from what precedes
and $|\bb|\|\dot g\|_{L^2} \lesssim t^{-\frac{25}3}$ that
\begin{equation}\label{import}
\begin{aligned}
\left| b_k \la \dot g ,\uln A_k \partial_th \ra 
-\left\{ b_k\lambda_k^{-2}(\lambda_k' + b_k)\la \dot g, \uln\Lambda_k \Lambda_k W_k\ra
+ b_k\lambda_k^{-2} \la \dot g, (y_k' \cdot\nabla_k \Lambda_k W_k)\ra\right\}\right|
\leq C \eta t^{-\frac{25}3}. 
\end{aligned}
\end{equation}

Finally, we use \eqref{eq:ansatz-err-3}, $\uln A=\frac{\Delta q}{5}+A$ and~(i)-(iii) of Lemma~\ref{le:Ak} to estimate the last term in~\eqref{e:dJ} as follows
\begin{equation}\label{4thterm}\begin{aligned}
&-b_k\la \partial_t \dot g, \uln A_k h\ra
 \geq - \eta b_k\lambda_k^{-1} \|h\|_{\dot H^1}^2\\
&\qquad +b_k \lambda_k^{-1} \bigg\{\int_{|x-y_k|<R\lambda_k} |\nabla h(x)|^2\ud x
	 -\frac 15\Big\la \Delta q\Big(\frac{\cdot-y_k}{\lambda_k}\Big)h,f(\phi+h)-f(\phi) \Big\ra\bigg\}\\
&\qquad +b_k \la A_k \phi , f(\phi+h)-f(\phi)-f'(\phi)h\ra\\
&\qquad -b_k \sum_{j=1}^K (\lambda_j' + b_j)b_j\lambda_j^{-2}\la \uln\Lambda_j\Lambda_j W_j, \uln A_k h\ra
-b_k\sum_{j=1}^K b_j \lambda_j^{-2}y_j' \la \grad_j\Lambda_j W_j, \uln A_k h\ra \\
&\qquad +b_k \sum_{j=1}^K (b_j' - B_j(\bs\lambda))\lambda_j^{-1}\la \Lambda_j W_j, \uln A_k h\ra
-C t^{-\frac {35}3}.
\end{aligned}\end{equation}
The first line of \eqref{4thterm} is lower bounded by $-C \eta t^{-\frac{25}3}$.
For the second line, we first observe that since $|\Delta q(x)|\lesssim 1$ for all $x\in \bR^5$,
using also \eqref{simple2}, we have
\begin{equation}\label{stra}
\Big| \Big\la \Delta q \Big(\frac{\cdot-y_k}{\lambda_k}\Big)h,f(\phi+h)-f(\phi) \Big\ra
- \Big\la \Delta q \Big(\frac{\cdot-y_k}{\lambda_k}\Big)h,f'(\phi)h \Big\ra\Big|
\lesssim t^{-11}.
\end{equation}
We claim that
\begin{equation}\label{simple3}
\int_{|x-y_k|<\lambda_k \tilde R} |f'(\phi)-f'(W_k)|h^2\lesssim t^{-\frac{31}3},\quad 
\int_{|x-y_k|>\lambda_k R} |f'(W_k)| h^2 \lesssim R^{-2}t^{-\frac{22}3}.
\end{equation}
Indeed, by Holder and Sobolev inequalities, and then Taylor expansion
\begin{align*}
\int_{|x-y_k|<\lambda_k \tilde R} |f'(\phi)-f'(W_k)|h^2
& \lesssim \|h\|_{\dot H^1}^2\|f'(\phi)-f'(W_k)\|_{L^{\frac {10}3}(|x-y_k|<\lambda_k \tilde R)}\\
& \lesssim \|h\|_{\dot H^1}^2 \sum_{j\neq k} \left(\|W_j\|_{L^{\frac {10}3}(|x-y_k|<\lambda_k \tilde R)}^{\frac 43}
+\|W_jW_k^{\frac 13}\|_{L^{\frac 52}(|x-y_k|<\lambda_k \tilde R)}
\right)\\
& \lesssim \|h\|_{\dot H^1}^2 \sum_{j\neq k} \|W_j\|_{L^{\frac {10}3}(|x-y_k|<\lambda_k \tilde R)} 
\lesssim \|h\|_{\dot H^1}^2|\bla|^{\frac 32} \lesssim t^{-\frac{31}3}.
\end{align*}
Similar estimates give $\int_{|x-y_k|>\lambda_k R} |f'(W_k)| h^2 \lesssim \|h\|_{\dot H^1}^2R^{-2} \lesssim R^{-2}t^{-\frac{22}3}$ and thus \eqref{simple3} is proved.

From the definition of $q$ in Lemma~\ref{le:Ak}, $\Delta q(x)=5$ for $|x|\leq R$ and $\Delta q(x)=0$ for $|x|\geq \tilde R$.
Thus, \eqref{stra} and~\eqref{simple3} imply that
\[
\Big|\frac 15\Big\la \Delta q \Big(\frac{\cdot-y_k}{\lambda_k}\Big)h,f(\phi+h)-f(\phi) \Big\ra
-\int_{|x-y_k|<\lambda_k R} f'(W_k(x))h^2(x)\ud x \Big| \lesssim R^{-2} t^{-\frac{25}3}.
\]
Therefore, up to negligible terms, the second line of \eqref{4thterm} is estimated by 
\[
b_k \lambda_k^{-1}\int_{|x-y_k|<R\lambda_k} \left\{|\nabla h(x)|^2 - f'(W_k(x))h^2(x)\right\} \ud x
-C R^{-2} t^{-\frac{25}3}.
\]
Using~\eqref{eq:ansatz-err-1}, \eqref{eq:ap-boot}-\eqref{eq:brouwer-boot} and the
definitions of $Z_k^\pm$, it holds
\begin{equation}\label{h:ortho:Y}
|\la\lambda_k^{-2} Y_k,h\ra |^2 \lesssim \|h-g\|_{\dot H^1}^2+|\la \lambda_k^{-2} Y_k,g\ra|^2
\lesssim t^{-10}+ (a_k^-)^2 +(a_k^+)^2 \lesssim t^{-8}.
\end{equation}
Thus,
 applying Lemma~\ref{le:coer} to $h$,
with $R$ large enough, we have the lower bound
\begin{equation*}
 \int_{|x-y_k|<R\lambda_k} \left\{|\nabla h(x)|^2 - f'(W_k(x))h^2(x)\right\} \ud x
\geq - \eta \|\nabla h\|_{L^2}^2 - Ct^{-8}
\geq - 2 \eta t^{-\frac {22}3} .
\end{equation*}
Next, we claim that 
\begin{equation}\label{simple4}
\|\lambda_k \uln A_k\phi - \uln \Lambda_k W_k\|_{L^{\frac{10}3}} \lesssim t^{-1} + R^{-2},
\end{equation}
which, combined with \eqref{simple2}, implies that the third term in the right-hand side of~\eqref{4thterm} is equal to 
\[
b_k\lambda_k^{-1} \la \Lambda_k W_k , f(\phi+h)-f(\phi)-f'(\phi)h\ra
\]
up to negligible terms.
To prove \eqref{simple4}, we just observe that since $\uln A_k W_k = \lambda_k^{-1} \Lambda_k W_k$ for $|x-y_k|\leq R\lambda_k$
and $\uln A_k\phi =\uln A_kW_k =0$ for $|x-y_k|\geq \tilde R\lambda_k$, it holds
\begin{align*}
\|\lambda_k \uln A_k\phi - \Lambda_k W_k\|_{L^{\frac{10}3}}
&\leq \|\lambda_k \uln A_k(\phi - W_k)\|_{L^{\frac{10}3}( |x-y_k|< \tilde R \lambda_k)}+
\|\lambda_k \uln A_k W_k - \Lambda_k W_k\|_{L^{\frac{10}3}(|x-y_k|>R\lambda_k)}\\
&\lesssim t^{-1} + R^{-2}.
\end{align*}

Finally, we claim that the last three terms of~\eqref{4thterm} are negligible.
Indeed, this is a consequence of Lemma~\ref{lem:mod}, (i) of Lemma~\ref{le:Ak} and the bound $\|h\|_{\dot H^1}
\lesssim t^{-\frac{11}3}$.

We conclude that
\begin{equation}\label{surdJ}\begin{aligned}
\cJ_k' &\leq 
- (\lambda_k' + b_k)b_k \lambda_k^{-2}\la \uln\Lambda_k \Lambda_k W_k, \dot g\ra
- b_k\lambda_k^{-2}\la (y_k'\cdot\grad_k) \Lambda_k W_k, \dot g\ra 
 \\ & \quad + b_k\lambda_k^{-1}\la \Lambda_k W_k, f(\phi + h) - f(\phi) - f'(\phi)h\ra
- C(\eta+R^{-2}) t^{-\frac {25}3} .
\end{aligned}\end{equation}
Combining~\eqref{surdI} and \eqref{surdJ}, we obtain, with $\delta > 0$ arbitrarily small and under the bootstrap assumptions, that
$
\cH' \geq -\delta t^{-\frac{25}{3}},
$ which is \eqref{eq:energyle}.
\end{proof}
\subsection{Control of the scaling parameters}\label{s:3.5} 
In this subsection, we prove that for all $t \in [T_\star,T]$,
\begin{align}
|\bs \lambda - \bs c t^{-2}| &\leq \frac 12t^{-\frac 73} , \label{eq:lambda-boot-2} \\
|\bs b - 2 \bs c t^{-3}| &\leq \frac 12t^{-\frac{10}{3}} .\label{eq:b-boot-2}
\end{align}
The argument is one of the original aspects of this article compared to previous works on multi-solitons.
Equations~\eqref{eq:lambda-mod} and \eqref{eq:b-mod} are necessary but not sufficient to estimate~$\bs\lambda$
and $\bs b$. 
Indeed, to control a one-dimensional instability related to $|\bs \lambda|$,
we need to use specific approximate Lyapunov functionals $\mathcal F$ and $\mathcal G$ and 
the following bound on $|\bla |$ from~\eqref{eq:brouwer-boot}
\begin{equation}
\label{eq:r-bound}
\left| |\bla|-|\bs c| t^{-2}\right| \leq t^{-\frac{12}{5}},\quad \mbox{for all }t \in [T_\star, T].
\end{equation}
Recall that~\eqref{eq:brouwer-boot} gathers all terms for which a topological argument is required
(see next subsection).

\begin{proof}[Proof of \eqref{eq:lambda-boot-2}-\eqref{eq:b-boot-2}]
For $t \in [T_\star, T]$ denote $r := |\bs \lambda|$, and define $\bs \theta\in {\mathbb S}^{K-1}_+$, $\rho\in \bR$ and $\bs b^\perp\in \bR^K$
by the relations
\begin{equation}
\bs\lambda = r\bs\theta,\quad \bs b = \rho\bs\theta + \bs b^\perp, \quad \bs b^\perp \perp \bs\theta.
\end{equation}
Note that from~\eqref{eq:param-init}, $\bs b^\perp(T) = 0$ and $\bs\theta(T) = \bs c/|\bs c|$.
We will prove that for all $t \in [T_\star, T]$
\begin{gather}
\label{eq:b-perp-boot}
|\bs b^\perp| \leq t^{-\frac{31}{9}}, \\
\label{eq:theta-boot}
|\bs \theta - \bs c/|\bs c|| \leq t^{-\frac{4}{9}}.
\end{gather}
Projecting \eqref{eq:lambda-mod} first on $\bs \theta$, and then on its orthogonal complement, we obtain, for all $t \in [T_\star, T]$,
\begin{gather}
\label{eq:dtr}
|r' + \rho| \lesssim t^{-\frac{11}{3}}, \\
\label{eq:dttheta}
|\bs \theta' + \bs b^\perp/r| \lesssim t^{-\frac{5}{3}}.
\end{gather}
From \eqref{eq:b-mod} and (i) of Lemma~\ref{le:l} we get
\begin{equation}
\label{eq:dtbperp}
(\bs b^\perp)' = \bs b' - (\rho\bs \theta)' = r^2 \grad V(\bs \theta) - \rho'\bs \theta - \rho\bs\theta' + O(t^{-\frac{14}{3}}).
\end{equation}

Consider the following quantity
\begin{equation}
\cF := \frac 12 r^{-3}|\bs b^\perp|^2 + V(\bs \theta).
\end{equation}
Let
\begin{equation}
\tau := \inf\{t\in [T_\star,T]: \mbox{\eqref{eq:b-perp-boot} and \eqref{eq:theta-boot} hold on $[t, T]$}\}
\end{equation}
and suppose that $\tau > T_\star$.
We check that for all $t \in [\tau, T]$ we have
\begin{equation}
\label{eq:dtI-intern}
\cF' \gtrsim -t^{-\frac{19}{9}}.
\end{equation}
Indeed, we have
\begin{equation}
\label{eq:dtI-intern-2}
\cF' = -\frac 32 r'r^{-4}|\bs b^\perp|^2 + r^{-3}(\bs b^\perp)'\cdot \bs b^\perp + \bs\theta'\cdot \grad V(\bs\theta).
\end{equation}
From \eqref{eq:b-perp-boot}, \eqref{eq:dtr} and \eqref{eq:r-bound} we obtain
\begin{equation}
\label{eq:dtI-intern-3}
\Big| r' r^{-4}|\bs b^\perp|^2 + r^{-4}\rho |\bs b^\perp|^2\Big| \lesssim t^{-\frac{11}{3} + 8 - \frac{62}{9}}
= t^{-\frac{23}{9}} \ll t^{-\frac{19}{9}}.
\end{equation}
From \eqref{eq:b-perp-boot}, \eqref{eq:r-bound} and \eqref{eq:dtbperp} we obtain
\begin{equation}
\label{eq:dtI-intern-4}
\big|r^{-3}(\bs b^\perp)'\cdot \bs b^\perp - r^{-3}\big(r^2 \grad V(\bs \theta) - \rho'\bs \theta - \rho\bs\theta'\big)\cdot \bs b^\perp\big| \lesssim t^{6 - \frac{14}{3} - \frac{31}{9}} = t^{-\frac{19}{9}}.
\end{equation}
Using $\bs\theta\cdot \bs b^\perp = 0$ and
\begin{equation}
\big|r^{-3}\rho\bs\theta'\cdot \bs b^\perp+ r^{-4}\rho|\bs b^\perp|^2\big| \leq r^{-3}\rho|\bs b^\perp|\big|\bs\theta'+\bs b^\perp/r\big| \lesssim t^{6-3-\frac{31}{9}-\frac 53} = t^{-\frac{19}{9}},
\end{equation}
this yields
\begin{equation}
\big|r^{-3}(\bs b^\perp)'\cdot \bs b^\perp - \big(r^{-1} \grad V(\bs \theta)\cdot \bs b^\perp + r^{-4}\rho|\bs b^\perp|^2\big)\big| \lesssim t^{-\frac{19}{9}}.
\end{equation}
Since $\bs c/|\bs c|^{-1}$ is a critical point of $V\vert_{\bS^{K-1}_+}$ and $V$ is smooth in its neighborhood
(see Lemma~\ref{le:l}),
\eqref{eq:theta-boot} implies that the component of $\nabla V(\bs\theta)$ orthogonal to $\bs\theta$ is $O(t^{-\frac 49})$.
Thus \eqref{eq:dttheta} yields
\begin{equation}
\label{eq:dtI-intern-5}
|\bs\theta'\cdot \grad V(\bs\theta) - ({-}\bs b^\perp/r)\cdot\grad V(\bs\theta)| = |(\bs\theta' + \bs b^\perp/r)\cdot \grad V(\bs \theta)| \lesssim t^{-\frac 49 - \frac 53} = t^{-\frac{19}{9}}.
\end{equation}
Formula \eqref{eq:dtI-intern-2} and the bounds \eqref{eq:dtI-intern-3}, \eqref{eq:dtI-intern-4} and \eqref{eq:dtI-intern-5} yield
\begin{equation}
\label{eq:dtI-intern-6}
\cF' = \frac 32 r^{-4}\rho|\bs b^\perp|^2 + r^{-1}\grad V(\bs\theta)\cdot \bs b^\perp + r^{-4}\rho|\bs b^\perp|^2
-r^{-1}\grad V(\bs\theta)\cdot \bs b^\perp + O(t^{-\frac{19}{9}}),
\end{equation}
which proves \eqref{eq:dtI-intern}, because $\rho > 0$.

Integrating \eqref{eq:dtI-intern} between $t \in [\tau, T]$ and $T$ yields
\begin{equation}
\frac 12 r^{-3}|\bs b^\perp|^2 + V(\bs\theta) - V(\bs c/|\bs c|) \lesssim t^{-\frac{10}{9}}.
\end{equation}
Since $V$ attains its global minimum at $\bs c / |\bs c|$, this implies
\begin{equation}
\label{eq:bperp-improved}
r^{-3}|\bs b^\perp|^2 \lesssim t^{-\frac{10}{9}}\quad \mbox{and so}\quad |\bs b^\perp| \lesssim t^{-3-\frac 59} = t^{-\frac{32}{9}}.
\end{equation}
Thus \eqref{eq:dttheta} implies $|\bs \theta'| \lesssim t^{-\frac{14}{9}}$; in particular, using
$\theta(T)=\frac{\bcc}{|\bcc|}$, we obtain the following improved bound on $[\tau, T]$
\begin{equation}
\label{eq:theta-improved}
|\bs \theta - \bs c/|\bs c|| \lesssim t^{-\frac 59}.
\end{equation}
Bounds \eqref{eq:bperp-improved} and \eqref{eq:theta-improved} show that \eqref{eq:b-perp-boot} and \eqref{eq:theta-boot}
cannot break down at $t = \tau$, thus proving that \eqref{eq:b-perp-boot} and \eqref{eq:theta-boot} indeed hold on $[T_\star, T]$.

By the triangle inequality, \eqref{eq:r-bound} and \eqref{eq:theta-boot} we have
\begin{equation}
|\bs\lambda - \bs c t^{-2}| = |r\bs\theta - \bs c t^{-2}| \leq |r - |\bs c|t^{-2}| + |\bs c|t^{-2}|\bs \theta - \bs c/|\bs c|| \lesssim t^{-\frac{12}{5}} + t^{-2}t^{-\frac 49} \ll t^{-\frac 73},
\end{equation}
which proves \eqref{eq:lambda-boot-2}.

Now, we analyse the evolution of $(r, \rho)$. For $\bs \theta \in \bS^{K-1}$ set
\begin{equation}
n(\bs\theta) := -\bs\theta\cdot \grad V(\bs\theta).
\end{equation}
Taking the inner product of \eqref{eq:dtbperp} with $\bs\theta$ gives
\begin{equation}
\rho' = -r^2 n(\bs\theta) - (\bs b^\perp)'\cdot \bs\theta + O(t^{-\frac{14}{3}}).
\end{equation}
Note that \eqref{eq:dttheta} and \eqref{eq:b-perp-boot} imply in particular $|\bs \theta'| \lesssim t^{-\frac{31}{9} + 2} = t^{-\frac{13}{9}}$.
Since $\bs b^\perp\cdot \bs\theta = 0$ for all $t$, we have
\begin{equation}
\big|(\bs b^\perp)'\cdot \bs\theta\big| = \big|\bs b^\perp\cdot \bs\theta'\big|
\leq |\bs b^\perp|\big|\bs\theta'\big|\lesssim t^{-\frac{31}{9} - \frac{13}{9}} \ll t^{-\frac{14}{3}},
\end{equation}
and thus
\begin{equation}
\rho' = -r^2 n(\bs \theta) + O(t^{-\frac{14}{3}}).
\end{equation}
Since $n(\bs\theta)$ is smooth in a neighborhood of $\uln{\bs\theta} = \bs c/|\bs c|$ (see Lemma~\ref{le:l}),
\eqref{eq:theta-boot} yields the estimate $|n(\bs \theta) - n(\bs c/|\bs c|)| \lesssim t^{-\frac 49}$. Thus
\begin{equation}
\label{eq:dtrho}
\rho' = -r^2 n(\bs c/|\bs c|) + O(t^{-\frac{40}{9}}).
\end{equation}

Consider 
\[\cG := \frac 12 \rho^2 - 2|\bs c|^{-1}r^3.\]
Using $r \lesssim t^{-2}$, $\rho \lesssim t^{-3}$,
\eqref{eq:dtr}, \eqref{eq:dtrho} and the fact that $|\bs c| = 6(n(\bs c/|\bs c|))^{-1}$ 
(see \eqref{def:c}), we compute
\begin{equation}
\cG' = -r^2\rho n(\bs c/|\bs c|) + 6|\bs c|^{-1}r^2\rho + O(t^{-\frac{40}{9}}t^{-3} + t^{-4}t^{-\frac{11}{3}}) = O(t^{-\frac{67}{9}}).
\end{equation}
Since $\cG(T) = 0$ by \eqref{eq:param-init}, we obtain by integration on $[t,T]$,
\begin{equation}
\big(\rho - 2|\bs c|^{-\frac 12}r^\frac 32\big)\big(\rho + 2|\bs c|^{-\frac 12}r^\frac 32\big) = 2\cG
\lesssim t^{-\frac{58}{9}};
\end{equation}
thus \eqref{eq:r-bound} yields
\begin{equation}
\label{eq:rho-bound}
\big|\rho - 2|\bs c|^{-\frac 12}r^\frac 32\big| \lesssim t^{-\frac{31}{9}},
\end{equation}
and last \eqref{eq:dtr} implies
\begin{equation}
\label{eq:dtr-1st-ord}
\big|r' + 2|\bs c|^{-\frac 12}r^\frac 32\big| \lesssim t^{-\frac{31}{9}}.
\end{equation}
The bound \eqref{eq:rho-bound} also implies, again using \eqref{eq:r-bound},
\begin{equation}
\label{eq:rho-bound-2}
\big|\rho - 2|\bs c|t^{-3}\big| \lesssim t^{-\frac{31}{9}} + t^{-3}\big|(t^2 r)^{\frac 32} - |\bs c|^\frac 32\big|
\lesssim t^{-\frac{31}{9}} + t^{-3 - \frac 25} \lesssim t^{-\frac{17}{5}}.
\end{equation}
By the triangle inequality and previous estimates, we have
\begin{equation}
\begin{aligned}
|\bs b - 2\bs c t^{-3}| &= |\rho\bs\theta + \bs b^\perp - 2\bs c t^{-3}| \leq |\bs b^\perp| + \big|\rho - 2|\bs c|t^{-3}\big| + 2|\bs c|t^{-3}\big|\bs \theta - \bs c/|\bs c|\big| \\
&\lesssim t^{-\frac{31}{9}} + t^{-\frac{17}{5}} + t^{-\frac{31}{9}} \ll t^{-\frac{10}{3}},
\end{aligned}
\end{equation}
which proves \eqref{eq:b-boot-2}.\end{proof}

\subsection{Closing the bootstrap argument}\label{s:3.6} 
Now, we prove that for all $t \in [T_\star,T]$, it holds
\begin{align}
\|\vec g\|_\cE &\leq \frac 12t^{-\frac{11}3}, \label{eq:g-boot-2} \\
|\bs y - \bs z| &\leq \frac 12t^{-\frac 73} , \label{eq:y-boot-2} \\
\sum_{k=1}^K (a_k^+)^2 &\leq \frac 12t^{-8} . \label{eq:ap-boot-2}
\end{align}
\begin{proof}[Proof of \eqref{eq:g-boot-2}-\eqref{eq:ap-boot-2}]
Using~\eqref{eq:param-init-bis} and~\eqref{eq:ansatz-err-1}, $\|h(T)\|_{\dot H^1} \leq \|g(T)\|_{\dot H^1} + \|h(T) - g(T)\|_{\dot H^1} \lesssim T^{-4}$, and thus it holds
\begin{equation}
\cH(T) \lesssim T^{-8}.
\end{equation}
Hence, integration of~\eqref{eq:energyle} implies $\cH(t) \leq \delta t^{-\frac{22}{3}}$ 
and thus $\cI(t)\leq 2 \delta t^{-\frac{22}3}$, for all $t \in [T_\star, T]$.

From \eqref{eq:ortho} and~\eqref{eq:ansatz-err-1}, it holds 
\[|\la \lambda_k^{-2}\Delta_k\Lambda_k W_k, h\ra| + |\la \lambda_k^{-2}\grad_k W_k, h\ra|\lesssim t^{-5}.\]
Besides, from~\eqref{h:ortho:Y}, we recall that 
$|\la \lambda_k^{-2}\YY_k, h\ra|^2 \lesssim t^{-8}$.
Therefore, applying Lemma~\ref{le:coer-multi} and standard arguments
to estimate $\int | F(\phi+h)-F(\phi)-f(\phi)h-f'(\phi)h^2|\ll t^{-\frac{22}3}$, we obtain the following estimate, for $\delta$ small enough, 
\[
\|\grad h\|_{L^2}^2+\|\dot g\|_{L^2}^2 \lesssim \cI + t^{-8}\leq \frac 13 t^{-\frac{22}{3}},
\]
This yields \eqref{eq:g-boot-2} on $\vec g$ using again the estimate on $g-h$ from \eqref{eq:ansatz-err-1}.

Bound \eqref{eq:y-boot-2} follows immediately from \eqref{eq:y-mod}, $\bs y(T)=\bzz$ (see \eqref{eq:param-init}) and integration.
In order to prove \eqref{eq:ap-boot-2}, we observe that \eqref{eq:a-mod} and \eqref{eq:ap-boot} yields
\begin{equation}
\Big|\dd t\sum_k (a_k^+)^2 - 2\nu\sum_k\frac{(a_k^+)^2}{\lambda_k}\Big| \lesssim t^{-8},
\end{equation}
hence, by~\eqref{eq:lambda-boot} there is $C > 0$ (independent of $t$) such that
\begin{equation}
\label{eq:dtap-pos}
\dd t \sum_k (a_k^+)^2 \geq C t^2 \sum_k (a_k^+)^2 + O(t^{-8}).
\end{equation}
It is clear that \eqref{eq:ap-boot-2} holds for $t$ close to $T$.
Supposing that \eqref{eq:ap-boot-2} breaks down for the first time at some $T_1\in (T_\star,T)$,
we would have on the one hand $\dd t \sum_{k=1}^K |a_k^+(T_1)|^2 \leq 0$; on the other hand
\eqref{eq:dtap-pos} would yield $\dd t\sum_{k=1}^K |a_k^+(T_1)|^2 > 0$. This contradiction proves \eqref{eq:ap-boot-2}.\end{proof}

Finally, we complete the proof of Proposition~\ref{prop:bootstrap}, dealing with the remaining
bootstrap estimate~\eqref{eq:brouwer-boot}.
For the sake of contradiction, suppose that for any $(\alpha_0, \alpha_1, \ldots, \alpha_K) \in \bar \cB_{\bR^{K+1}}$, 
it holds $T_\star=T_\star(\alpha_0, \alpha_1, \ldots, \alpha_K) \in (T_0,T]$.
It follows from \eqref{eq:lambda-boot-2}-\eqref{eq:b-boot-2} and \eqref{eq:g-boot-2}-\eqref{eq:ap-boot-2} that 
on $[T_\star,T]$, equality is reached in none of the estimates
\eqref{eq:g-boot}-\eqref{eq:ap-boot}.
Therefore, from (i) of Lemma~\ref{lem:mod}, equality has to be reached at $t=T_\star$ in estimate~\eqref{eq:brouwer-boot}.

Recall that $r:=|\bla|$ and set also
\[\wt a_0(t) := t^\frac{12}{5}(r(t) - |\bs c|t^{-2}),\quad\wt a_k(t) := t^4 a_k^-(t)
\]
so that from \eqref{eq:param-init}-\eqref{eq:param-init-bis},
\[
\wt a_0(T) = \alpha_0\quad\mbox{and}\quad \wt a_k(T) = \alpha_k \mbox{ for $k=1,\ldots K$.}
\]
The contradiction assumption says that for any $(\alpha_0, \alpha_1, \ldots, \alpha_K) \in \bar \cB_{\bR^{K+1}}$, it holds
\begin{equation}
\mbox{for all $t \in[T_\star, T]$, $(\wt a_0(t), \wt a_1(t), \ldots, \wt a_K(t)) \in \bar \cB_{\bR^{K+1}}$
and 
$(\wt a_0(T_\star), \wt a_1(T_\star), \ldots, \wt a_K(T_\star)) \in\bS^K$.}
\end{equation}
Consider the application $\Phi: \bar \cB_{\bR^{K+1}}\to \bS^K$ defined by
\begin{equation}
\Phi(\alpha_0, \alpha_1, \ldots, \alpha_K) := (\wt a_0(T_\star), \wt a_1(T_\star), \ldots, \wt a_K(T_\star)).
\end{equation}
To prove that $\Phi$ is continuous, we only need to check that $(\alpha_0, \alpha_1, \ldots, \alpha_K)\mapsto T_\star$ is continuous.
This property is deduced from the following transversality condition:
 for any $T_1\in [T_\star,T]$ such that $(\wt a_0(T_1), \wt a_1(T_1), \ldots, \wt a_K(T_1)) \in\bS^K$, it holds
\begin{equation}
\label{eq:transve}
\sum_{k=0}^K \wt a_k'(T_1) \wt a_k(T_1) < 0.
\end{equation}
Proof of~\eqref{eq:transve}.
On the one hand, for $k = 1, \ldots, K$, estimate \eqref{eq:a-mod} yields
\begin{equation}
(a_k^-)'(T_1)a_k^-(T_1) = -\frac{\nu}{\lambda_k(T_1)}\big(a_k^-(T_1)\big)^2 + O(T_1^{-4}|a_k^-(T_1)|),
\end{equation}
and so
\begin{equation}
\label{eq:ak'ak}
\wt a_k'(T_1)\wt a_k(T_1) = T_1^8(4 T_1^{-1} a_k^-(T_1) + (a_k^-)'(T_1))a_k^-(T_1) = -\frac{\nu}{2\lambda_k(T_1)}\wt a_k(T_1)^2 + O(|\wt a_k(T_1)|).
\end{equation}
Using $|O(|\wt a_k(T_1)|)|\leq \frac{\nu}{4\lambda_k(T_1)}\wt a_k(T_1)^2 + CT_1^{-2}$ for some constant $C$ and taking the sum for $k=1,\ldots,K$, we obtain
\begin{equation}
\label{eq:ak'ak-sum}
\sum_{k=1}^K \wt a_k'(T_1) \wt a_k(T_1)
\leq -\sum_{k=1}^K \frac{\nu}{4\lambda_k(T_1)}\wt a_k(T_1)^2 + C T_1^{-2}.
\end{equation}
On the other hand, using the definition of $\wt a_0$ and then \eqref{eq:dtr-1st-ord}, it holds
\begin{equation}
\begin{aligned}
\wt a_0'(T_1) &= \frac{12}{5}T_1^\frac 75(r(T_1) - |\bs c|T_1^{-2}) + T_1^\frac{12}{5}(r'(T_1) + 2|\bs c|T_1^{-3}) \\
&= \frac{12}{5}T_1^\frac 75(r(T_1) - |\bs c|T_1^{-2}) - 2|\bs c|^{-\frac 12}T_1^\frac{12}{5}(r(T_1)^\frac 32 - |\bs c|^\frac 32 T_1^{-3}) + O(T_1^{-\frac{47}{45}}).
\end{aligned}
\end{equation}
Observe that \eqref{eq:r-bound} implies
\begin{equation}
2|\bs c|^{-\frac 12}T_1\frac{r(T_1)^\frac 32 - |\bs c|^\frac 32 T_1^{-3}}{r(T_1) - |\bs c|T_1^{-2}} = 2|\bs c|^{-\frac 12}\frac{r(T_1)T_1^2 + (r(T_1)T_1^2)^\frac 12 |\bs c|^\frac 12 + |\bs c|^\frac 32}{(r(T_1)T_1^2)^\frac 12 + |\bs c|} = 3 + O(T_1^{-\frac 25}),
\end{equation}
so that
\begin{equation}
\wt a_0'(T_1) = -\frac 35 T_1^{-1}a_0(T_1) + O(T_1^{-\frac{47}{45}}).
\end{equation}
This estimate combined with \eqref{eq:ak'ak-sum} and $\sum_{k=0}^K\wt a_k(T_1)^2=1$ yield
\begin{equation}
\sum_{k=0}^K \wt a_k'(T_1)\wt a_k(T_1) \leq -\frac 35 T_1^{-1}\sum_{k=0}^K\wt a_k(T_1)^2 + O(T_1^{-\frac{47}{45}}) = -\frac 35 T_1^{-1} + O(T_1^{-\frac{47}{45}}) < 0,
\end{equation}
provided that $T_1$ is large enough, which proves \eqref{eq:transve}.

Therefore, $\Phi$ is continuous on $\bar \cB_{\bR^{K+1}}$ and its restriction to $\bS^K$ is the identity.
This is a contradiction with the no-retraction theorem.

\subsection{Proof of Theorem~\ref{thm:constr-N5} from Proposition~\ref{prop:bootstrap}}
We follow the strategy by compactness from~\cite{CMM,JJjfa,JJwave,Ma1,Mcmp,RS}, using the uniform estimates of Proposition~\ref{prop:bootstrap} on a sequence of well-prepared solutions of~\eqref{eq:nlw}.

Consider the solution $\vec u_n$ given by Proposition~\ref{prop:bootstrap} 
for $T=T_n$ where $T_n:=n>T_0$.
On the interval $[T_0,T_n]$, this solution is well-defined and its decomposition 
$(\Gamma_n,\vec g_n)$ satisfies the uniform estimates~
\eqref{eq:g-boot}-\eqref{eq:y-boot}.
In particular, from $\vec u_n = \vec W_{\Gamma_n}+\vec g_n$, we check that,
for all $t\in[T_0,T_n]$
\begin{equation}\label{unif:final}
\bigg\|u_n(t) - \sum_k \frac 1{(c_kt^{-2})^\frac32}W \bigg(\frac{\cdot-z_k}{c_k t^{-2}}\bigg)\bigg\|_{\dot H^1 }
+\|\partial_t u_n(t)\|_{L^2 } \leq C t^{-\frac 13}.
\end{equation}
We take a possibly larger $T_0$ so that $CT_0^{-\frac 13}<\eta$ where $\eta>0$ is the constant of 
Proposition~\ref{pr:A2}.

Since the sequence $(\vec u_n(T_0))_n$ is bounded in $\dot H^1\times L^2$,
after extraction of a subsequence, there exists $\vec u_0$ in $\dot H^1\times L^2$ such that
$\vec u_n(T_0)\rightharpoonup \vec u_0$ weakly in~$\dot H^1\times L^2$.
Fix $T>T_0$.
From Proposition~\ref{pr:A2} applied to the compact set
\[
\mathcal K=\Big\{\Big(\sum_k \frac 1{(c_kt^{-2})^\frac32}W \bigg(\frac{\cdot-z_k}{c_k t^{-2}}\bigg) ,0\Big),~
 t\in [T_0,T] \Big\},
\]
the solution $\vec u$ of \eqref{eq:nlw} corresponding to $\vec u(T_0)=u_0$ is well-defined and it holds
$\vec u_n(t)\rightharpoonup \vec u(t)$ weakly in~$\dot H^1\times L^2$ on $[T_0,T]$. By~\eqref{unif:final} and the properties of weak convergence, the solution
$\vec u$ satisfies, for all $t\in [T_0,T]$
\[
\bigg\|u(t) - \sum_k \frac 1{(c_kt^{-2})^\frac32}W \bigg(\frac{\cdot-z_k}{c_k t^{-2}}\bigg)\bigg\|_{\dot H^1 }
+\|\partial_t u(t)\|_{L^2 } \lesssim t^{-\frac 13}.
\]
Since $T\geq T_0$ is arbitrary, the solution $\vec u$ is defined and 
satisfies the conclusion of Theorem~\ref{thm:constr-N5} on~$[T_0,\infty)$.
We obtain a solution defined on $[0,\infty)$ with similar properties by time translation. 

\appendix

\section{Weak continuity of the flow near a compact set}

We reproduce two statements from Appendix A.2 of \cite{JJwave}
with the only difference that they are given here for general solutions and not only for radially symmetric solutions.
Using the result of profile decomposition stated in \cite[Proposition 2.8]{DKM1}, the proofs are similar up to dealing with additional position parameter.

\begin{proposition}\label{pr:A1}
There exists a constant $\eta>0$ such that the following holds. Let $\vec u:[t_0,T_{\max})\to \dot H^1\times L^2$
be a maximal solution of \eqref{eq:nlw} with $T_{\max}<\infty$.
Then for any compact set $\cK\subset \dot H^1\times L^2$ there exists $\tau<T_{\max}$ such that
$\dist(\vec u(t),\cK)>\eta$ for all $t\in [\tau,T_{\max})$.
\end{proposition}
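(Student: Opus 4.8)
The plan is to argue by contradiction, using the concentration--compactness/profile decomposition method exactly as in \cite{JJwave}, the only novelty being that in the non-radial setting the profile decomposition carries translation parameters $x_n^{(j)}$ in addition to the scaling parameters. Write $\cE=\dot H^1\times L^2$ and suppose the conclusion fails for a constant $\eta>0$ to be fixed below (independently of $\cK$). Then there are a sequence $t_n\nearrow T_{\max}$ and points $\vec v_n\in\cK$ with $\|\vec u(t_n)-\vec v_n\|_\cE\le\eta$. By compactness of $\cK$, after passing to a subsequence $\vec v_n\to\vec v\in\cK$ strongly in $\cE$; after a further extraction $\vec u(t_n)-\vec v_n\wto\vec\rho$ weakly in $\cE$, so $\vec u(t_n)\wto\vec u_\infty:=\vec v+\vec\rho$, and weak lower semicontinuity of the norm gives $\|\vec\rho\|_\cE\le\eta$, hence $\limsup_n\|\vec u(t_n)-\vec u_\infty\|_\cE\le\eta$.

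Next I would apply the profile decomposition of \cite[Proposition 2.8]{DKM1} (with both scaling and translation parameters) to the bounded sequence $\vec u(t_n)$, normalised so that the first profile is the weak limit $\vec u_\infty$, attached to the trivial parameters $(\lambda_n^{(0)},x_n^{(0)})=(1,0)$. Thus, up to a subsequence,
\[
\vec u(t_n)=\vec u_\infty+\sum_{j=1}^{J}\vec\Psi_n^{(j)}+\vec r_n^{(J)},
\]
where $\vec\Psi_n^{(j)}$ is built from a fixed profile $\vec\psi^{(j)}\in\cE$ by the scaling $\lambda_n^{(j)}$ and translation $x_n^{(j)}$, the parameters $(\lambda_n^{(j)},x_n^{(j)})_{j\ge1}$ are pairwise orthogonal and orthogonal to $(1,0)$, the remainder $\vec r_n^{(J)}$ is asymptotically negligible in the sense required by the stability theory (its free evolution tends to $0$ in the scaling-critical Strichartz norm as $J\to\infty$, uniformly in $n$), and the Pythagorean identity $\|\vec u(t_n)\|_\cE^2=\|\vec u_\infty\|_\cE^2+\sum_{j\ge1}\|\vec\psi^{(j)}\|_\cE^2+\|\vec r_n^{(J)}\|_\cE^2+o(1)$ holds. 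Combined with $\limsup_n\|\vec u(t_n)-\vec u_\infty\|_\cE\le\eta$, this forces $\sum_{j\ge1}\|\vec\psi^{(j)}\|_\cE^2\le\eta^2$; in particular every profile $\vec\psi^{(j)}$ with $j\ge1$ satisfies $\|\vec\psi^{(j)}\|_\cE\le\eta$.

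Now I would fix $\eta$, once and for all, to be smaller than the universal small-data threshold for \eqref{eq:nlw}, below which the Cauchy problem is globally well-posed with a finite, controlled Strichartz norm; this is where $\eta$ is chosen, and it depends only on the equation, not on $\cK$. With this choice, each nonlinear profile associated to $\vec\psi^{(j)}$, $j\ge1$, is a global solution of \eqref{eq:nlw} with small Strichartz norm, while the nonlinear profile associated to $\vec u_\infty$ is the solution of \eqref{eq:nlw} with data $\vec u_\infty$ at time $0$, which by the local Cauchy theory exists on a nontrivial interval $[0,\delta]$, $\delta>0$, with finite Strichartz norm there. Superposing these nonlinear profiles and invoking the long-time perturbation theory for \eqref{eq:nlw} (the orthogonality of the parameters making the interaction terms and the remainder negligible), I would conclude that for $n$ large the solution of \eqref{eq:nlw} with data $\vec u(t_n)$ exists on the full interval $[t_n,t_n+\delta]$. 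Since $t_n\nearrow T_{\max}<\infty$, for $n$ large $t_n+\delta>T_{\max}$, contradicting the maximality of $[t_0,T_{\max})$. Hence no such sequence exists, and there is $\tau<T_{\max}$ with $\dist(\vec u(t),\cK)>\eta$ on $[\tau,T_{\max})$.

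The weak-convergence bookkeeping and the appeal to the local theory are routine; the one genuinely delicate step --- and precisely where the extra translation parameters of the text's remark intervene --- is the passage from the \emph{linear} profile decomposition of the data $\vec u(t_n)$ to the \emph{nonlinear} one together with the perturbation estimate on $[t_n,t_n+\delta]$: one must check that the superposition of the nonlinear profiles (now involving concentrating or spreading scales \emph{and} escaping translations) has Strichartz norm bounded uniformly in $n$, that the cross terms between distinct profiles and between the profiles and $\vec r_n^{(J)}$ are negligible, and that $\vec r_n^{(J)}$ enters the perturbation lemma in the correct dispersive norm --- exactly the technical content imported from \cite{DKM1,JJwave}.
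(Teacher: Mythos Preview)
Your argument is correct and is exactly the approach the paper indicates: the paper does not give a proof but refers to Appendix~A.2 of \cite{JJwave}, adapted to the non-radial setting via the profile decomposition of \cite[Proposition~2.8]{DKM1}, which is precisely the contradiction/profile-decomposition/small-data/perturbation scheme you sketch. One small point: the profile decomposition in \cite{DKM1} also carries time parameters $t_n^{(j)}$, so when you pass to nonlinear profiles you should briefly note that profiles with $t_n^{(j)}/\lambda_n^{(j)}\to\pm\infty$ are handled by scattering (and are automatically global with finite Strichartz norm), not only by the small-data theory.
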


\begin{proposition}\label{pr:A2}
There exists a constant $\eta>0$ such that the following holds. Let $\cK\subset \dot H^1\times L^2$ be a compact set and let 
$\vec u:[T_1,T_2]\to \dot H^1\times L^2$ be a sequence of solutions of \eqref{eq:nlw} such that
\[\dist(\vec u_n(t),\cK)\leq\eta,\quad \mbox{for all $n\in \bN$ and $t\in [T_1,T_2]$.}\]
Suppose that $u_n(T_1)\rightharpoonup \vec u_0$ weakly in $\dot H^1\times L^2$. Then the solution $\vec u(t)$ of \eqref{eq:nlw}
with the initial condition $\vec u(T_1)=\vec u_0$ is defined for $t\in [T_1,T_2]$ and
\[
\vec u_n(T_1)\rightharpoonup \vec u(t),\quad \mbox{weakly in $\dot H^1\times L^2$ for all $t\in [T_1,T_2]$.}
\]
\end{proposition}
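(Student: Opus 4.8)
The plan is to run a connectedness argument on the interval $[T_1,T_2]$. First I would note that, since $\cK$ is norm-compact and $\dist(\vec u_n(t),\cK)\leq\eta$ for all $n$ and all $t\in[T_1,T_2]$, the family $(\vec u_n)$ is uniformly bounded in $\dot H^1\times L^2$, say $\|\vec u_n(t)\|_{\dot H^1\times L^2}\leq M$; in particular $\|\vec u_0\|_{\dot H^1\times L^2}\leq M$. I would take $\eta>0$ to be the constant of Proposition~\ref{pr:A1} and consider
\[
I:=\{\,t\in[T_1,T_2]\ :\ \vec u\ \text{is defined on}\ [T_1,t]\ \text{and}\ \vec u_n(s)\rightharpoonup\vec u(s)\ \text{in}\ \dot H^1\times L^2\ \text{for all}\ s\in[T_1,t]\,\}.
\]
This set contains $T_1$ and is an interval starting at $T_1$; the goal is to show it is both relatively open and relatively closed in $[T_1,T_2]$, hence all of $[T_1,T_2]$, which is exactly the conclusion of the proposition.

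For relative openness, fix $t_0\in I$ with $t_0<T_2$, so that $\vec u_n(t_0)\rightharpoonup\vec u(t_0)$ with $\|\vec u_n(t_0)\|\leq M$. After passing to a subsequence I would apply the profile decomposition in the form of \cite[Proposition 2.8]{DKM1}, now tracking space-translation parameters on the same footing as the scaling parameters: $\vec u_n(t_0)$ equals $\vec u(t_0)$ (the profile with trivial parameters, which carries the weak limit) plus finitely many concentrating/vanishing/escaping linear profiles plus a remainder that is weakly null for each fixed truncation index. By the long-time perturbation theory for \eqref{eq:nlw} (see \cite{KM}) there is $\delta=\delta(M)>0$ such that all $\vec u_n$, and the solution $\vec u$ extended past $t_0$, are defined on $[t_0,t_0+\delta]\cap[T_1,T_2]$ and, there, $\vec u_n$ is the sum of the corresponding nonlinear profiles plus a small remainder. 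Since the nonlinear evolution of a concentrating/vanishing/escaping profile remains such on a bounded time interval (by finite speed of propagation and scaling) it converges weakly to $0$; together with a diagonal argument in the truncation index and the usual subsequence-removal argument, this yields $\vec u_n(t)\rightharpoonup\vec u(t)$ for all $t\in[t_0,t_0+\delta]\cap[T_1,T_2]$, so $[T_1,\min(t_0+\delta,T_2)]\subset I$.

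For relative closedness --- and, in particular, for the a priori statement that $\vec u$ stays defined on $[T_1,T_2]$ --- I would argue by contradiction. If the maximal forward existence time satisfied $T_{\max}\leq T_2$, Proposition~\ref{pr:A1} would give $\dist(\vec u(t),\cK)>\eta$ for $t<T_{\max}$ near $T_{\max}$. But for $t<T_{\max}$ with $t\in I$ we have $\vec u_n(t)\rightharpoonup\vec u(t)$, and since $\cK$ is norm-compact the map $\dist(\cdot,\cK)$ is weakly sequentially lower semicontinuous (realize the distance on $\cK$ and use compactness), hence $\dist(\vec u(t),\cK)\leq\liminf_n\dist(\vec u_n(t),\cK)\leq\eta$, a contradiction. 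So $\vec u$ is defined on all of $[T_1,T_2]$; passing to the limit $t\uparrow\sup I$ using continuity of the flow and the uniform bounds then gives $\sup I\in I$, so $I$ is closed. With relative openness this forces $I=[T_1,T_2]$.

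The step I expect to be the real obstacle is the profile/perturbation bookkeeping in the openness step: one must pick the time origins of the nonlinear profiles correctly --- the time shifts $t_n^j$ may satisfy $t_n^j/\lambda_n^j\to\pm\infty$, in which case the nonlinear profile is the corresponding scattering solution --- so that the finitely many ``large'' nonlinear profiles are all defined on one interval of length $\delta(M)$ independent of $n$, and to see that this length depends only on the uniform bound $M$. This is exactly what is imported from \cite{DKM1,KM}; the added space translations $x_n^j$ are purely notational, since \eqref{eq:nlw} is translation invariant.
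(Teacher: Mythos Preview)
Your proposal is correct and matches the approach the paper indicates: the appendix does not give a detailed argument but simply refers to the proof in \cite[Appendix~A.2]{JJwave}, adapted to the nonradial setting via the profile decomposition of \cite[Proposition~2.8]{DKM1}, with the extra space-translation parameters handled by the translation invariance of \eqref{eq:nlw}. Your connectedness argument using openness (profile decomposition plus the perturbation theory of \cite{KM}) and closedness (Proposition~\ref{pr:A1} together with weak lower semicontinuity of $\dist(\cdot,\cK)$ for norm-compact $\cK$) is exactly that argument; the one point to make sure is explicit when you write it up is that the step $\delta$ in the openness part depends only on the uniform bound $M$, so that near $\sup I$ you can re-apply openness at some $t_0\in(\sup I-\delta,\sup I)\cap I$ rather than trying to pass to the limit $t\uparrow\sup I$ directly.
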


\end{document}